\documentclass[a4paper, english]{amsart}

\usepackage{etex}

\usepackage{amsmath}
\usepackage{amssymb}
\usepackage{enumerate}
\usepackage{graphicx}
\usepackage{comment} 

\usepackage{lmodern}
\usepackage{booktabs}
\usepackage[dvipsnames,svgnames,x11names,hyperref]{xcolor}
\usepackage{url}
\usepackage[pagebackref,colorlinks,citecolor=Mahogany,linkcolor=Mahogany,urlcolor=Mahogany,filecolor=Mahogany,hypertexnames=false]{hyperref}

\usepackage{tikz}
\usepackage{tikz-cd}

\newtheorem{thm}{Theorem}[section]
\newtheorem{cor}[thm]{Corollary}
\newtheorem{lem}[thm]{Lemma}

\theoremstyle{definition}

\theoremstyle{remark}
\newtheorem{rem}[thm]{Remark}
\newtheorem{ex}[thm]{Example}
\numberwithin{equation}{section}

\newcommand{\bC}{\mathbb{C}}

\newcommand{\bP}{\mathbb{P}}
\newcommand{\bQ}{\mathbb{Q}}
\newcommand{\bR}{\mathbb{R}}

\newcommand{\bZ}{\mathbb{Z}}

\newcommand{\cH}{\mathcal{H}}

\newcommand\lra{\longrightarrow}

\newcommand\trf{\mathrm{trf}}
\newcommand\Diff{\mathrm{Diff}}

\newcommand\SL{\mathrm{SL}}
\newcommand\Sp{\mathrm{Sp}}

\renewcommand\P{\bP}

\newcommand\KMU{\kappa^{MU}}

\newcommand\MU{\mathrm{MU}}
\newcommand\K{\mathrm{K}}
\newcommand\E{\mathrm{E}}
\renewcommand\H{\mathrm{H}}
\newcommand\ch{\mathrm{ch}}

\title[Mumford--Morita--Miller classes]{Mumford--Morita--Miller classes\\in generalised cohomology theories}
\author{Oscar Randal-Williams}
\email{o.randal-williams@dpmms.cam.ac.uk}
\address{Centre for Mathematical Sciences\\
Wilberforce Road\\
Cambridge CB3 0WB\\
UK}
\subjclass[2020]{55R40; 55N22, 57R77, 57K20, 14H10, 55R12, 57S17, 55N15}

\begin{document}
\begin{abstract}
The Mumford--Morita--Miller classes on the moduli space of Riemann surfaces may be defined not only in ordinary cohomology but in any complex-oriented cohomology theory. We begin their investigation, emphasising the new phenomena that arise.
\end{abstract}
\maketitle

\setcounter{tocdepth}{1}
\tableofcontents

\section{Premise}

Given an oriented surface bundle $\pi : E \to B$, i.e.\ a smooth fibre bundle with fibres closed 2-dimensional manifolds equipped with compatible orientations, the Mumford--Morita--Miller \cite{Mumford, MoritaChar, Miller} classes 
$$\kappa_i(\pi) \in \H^{2i}(B;\bZ)$$
are defined by the formula $\kappa_i(\pi) := \pi_!(e(T_\pi E)^{i+1})$, the pushforward along $\pi$ of the $(i+1)$st power of the Euler class of the tangent bundle of $E$ along the fibres of $\pi$. We may also say that the orientation of the 2-dimensional real vector bundle $T_\pi E \to E$ equips it with a complex structure (unique up to isomorphism), for which $c_1(T_\pi E) = e(T_\pi E)$. Hence $\kappa_i(\pi)$ may equally well be defined as the pushforward along $\pi$ of the $(i+1)$st power of $c_1(T_\pi E)$.

In order to make this definition, all we have used is that $T_\pi E$ has a first Chern class in integral cohomology, and that the map $\pi$ has an associated pushforward map in integral cohomology. In fact, this data exists in many generalised cohomology theories, and exists universally in complex cobordism theory, denoted $\MU^*(-)$. There is a (Conner--Floyd--)Chern class $c_1^{MU}(T_\pi E) \in \MU^{2}(E)$, and a pushforward map $\pi_!^{MU} : \MU^*(E) \to \MU^{*-2}(B)$, using which we may define
$$\kappa_i^{MU}(\pi) := \pi_!^{MU}(c_1^{MU}(T_\pi E)^{i+1}) \in \MU^{2i}(B).$$

Under the natural cycle map $\MU^*(-) \to \H^*(-;\bZ)$ the classes $\kappa_i^{MU}(\pi)$ map to $\kappa_i(\pi)$. However, for any complex-oriented cohomology theory $E$ there is by definition a cycle map $\MU^*(-) \to \E^*(-)$, defining $\kappa_i^E(\pi) \in \E^{2i}(B)$. This yields $K$-theory Mumford--Morita--Miller classes $\kappa_i^K(\pi) \in \K^0(B)$, as well as classes in the various ``designer" cohomology theories studied in chromatic homotopy theory. 

This paper begins the investigation of these generalised Mumford--Morita--Miller classes $\kappa_i^{E}$. There are several new phenomena which arise when this broader perspective is taken, and our goal is to indicate several avenues which may deserve further investigation, rather than to present any particular result.

\vspace{1ex}
\noindent\textbf{Acknowledgments.} Most of the ideas contained here were developed in 2013-14, and some early thoughts in the direction of Section \ref{sec:MUQ} were presented at the conference ``Cohomology of the moduli space of curves'' at Zürich in 2013. More recently I have benefited from conversations with Irakli Patchkoria, who has studied Euler characteristics of mapping class groups in generalised cohomology theories \cite{Patchkoria1,Patchkoria2}. Claude Opus 5.5 was used to produce Sage \cite{sage} code verifying the calculations, and to proofread a draft.

\section{Recollections and conventions}

\subsection{The moduli stack of Riemann surfaces}\label{sec:MgBackground}

We will be concerned with the generalised cohomology of the homotopy type of the moduli stack $\mathcal{M}_g$ of Riemann surfaces of genus $g \geq 2$. For us this is most conveniently described as the quotient
$$\mathcal{M}_g := \mathcal{T}_g /\!\!/ \Gamma_g$$
of the action of the mapping class group $\Gamma_g := \pi_0 \mathrm{Diff}^+(\Sigma_g)$ of the genus $g$ surface on Teichm{\"u}ller space $\mathcal{T}_g$, formed in the category of topological stacks. It carries the universal family of Riemann surfaces
$$\pi : \mathcal{C}_g \lra \mathcal{M}_g.$$
For $g=1$ we instead consider the moduli stack $\mathcal{M}_{1}^1$ of genus 1 Riemann surfaces equipped with a choice of point, i.e. elliptic curves. This has a similar description, where $\mathcal{T}_{1}^1$ is now identified with the complex upper half-plane, upon which $\Gamma_{1}^1 = \SL(2,\bZ)$ acts by M{\"o}bius transformations. It carries the universal family of elliptic curves
$$\pi : \mathcal{C}_{1}^1 \lra \mathcal{M}_{1}^1.$$

A topological stack presents a homotopy type \cite{Noohi, EbertStack}: in the case at hand, $\mathcal{T}_g$ is contractible so the homotopy type of $\mathcal{M}_g$ is simply the classifying space $B\Gamma_g$ of the mapping class group; similarly the homotopy type of $\mathcal{M}_{1}^1$ is the classifying space $B\SL(2, \bZ)$. So we will equivalently be discussing the generalised group cohomology of these groups.

The action of $\Gamma_g$ on Teichm{\"u}ller space $\mathcal{T}_g$ is a universal proper action, i.e.\ the fixed set of a subgroup $G \leq \Gamma_g$ is contractible if $G$ is finite and empty otherwise (``$\mathcal{T}_g$ is an $\underline{E}\Gamma_g$''). This is discussed by Mislin \cite[Section 3]{Mislin}, who then explains how the work of Broughton \cite[Theorem 2.7]{Broughton} shows that $\mathcal{T}_g$ equivariantly deformation retracts to a finite $\Gamma_g$-CW-complex. In conclusion, there is a finite $\underline{E}\Gamma_g$. We also record that $\Gamma_g$ has a torsion-free subgroup of finite index \cite[Theorem 6.9]{FarbMargalit}.

Similarly, there is a finite $\underline{E}\SL(2, \bZ)$ (which is a tree) and $\SL(2,\bZ)$ has a torsion-free subgroup of finite index.

\subsection{Topologies and completeness}

Let $\E$ be a spectrum. If $X$ is a CW-complex with finite skeleta, the $\E$-cohomology of $X$ has a multiplicative filtration 
$$F^p\E^*(X) := \mathrm{Ker}(\E^*(X) \to \E^*(X^{(p-1)})).$$
 The cosets of these subgroups define a basis for a topology on $\E^*(X)$. The Milnor sequence takes the form
$$0 \lra \bigcap_{p \geq 0} F^p\E^*(X) \lra \E^*(X) \lra \lim_p \E^*(X^{(p)}) \lra 0,$$
so this topology is (complete and) Hausdorff if and only if $\bigcap_{p \geq 0} F^p\E^*(X) = \lim^1 \E^*(X^{(p)})$ vanishes. It is very convenient that this should be the case, so that elements may be described as the (unique!) limit of Cauchy sequences. The following will suffice for our needs.

\begin{lem}\label{lem:Complete}
Let $\E$ be a spectrum with each $\pi_i(\E)$ a finitely-generated abelian group, and $\Gamma$ be a group admitting a finite $\underline{E}\Gamma$ and a torsion-free subgroup of finite index, then $\E^*(B\Gamma)$ is complete Hausdorff.
\end{lem}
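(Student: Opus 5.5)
The plan is to show that $\lim^1_p \E^*(B\Gamma^{(p)})$ vanishes by showing that the groups $\E^*(B\Gamma^{(p)})$ satisfy the Mittag-Leffler condition; completeness and Hausdorffness then follow from the Milnor sequence. It suffices to show that for each fixed degree $n$, the images of $\E^n(B\Gamma^{(q)}) \to \E^n(B\Gamma^{(p)})$ stabilise as $q \to \infty$. The standard way to get this is through finiteness. Since $\Gamma$ has a finite $\underline{E}\Gamma$, it has a finite-index torsion-free subgroup $\Gamma'$, which we may take normal. Then $\underline{E}\Gamma$ is a finite free $\Gamma'$-CW complex and contractible, so $B\Gamma' \simeq \underline{E}\Gamma/\Gamma'$ is a finite CW complex. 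Hence $B\Gamma'$ has a model with finitely many cells.

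First I treat $B\Gamma$ through its Atiyah--Hirzebruch spectral sequence
$$E_2^{s,t} = \H^s(\Gamma; \pi_{-t}\E) \Rightarrow \E^{s+t}(B\Gamma).$$
This is also the spectral sequence of the skeletal filtration, and its $\lim^1$ issues are controlled by the $E_2$-terms. The key claim is as follows: for each $(s,t)$, $\H^s(\Gamma;\pi_{-t}\E)$ is a finitely generated abelian group, and for $s$ large (above $\mathrm{vcd}$) it is a finite group. Finite generation holds because $\Gamma$ is of type $FP_\infty$, since the finite $\underline{E}\Gamma$ has finitely many cells in each dimension with finite stabilisers, and $\pi_{-t}\E$ is finitely generated. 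For $s > \dim \underline{E}\Gamma$, the transfer argument for $\Gamma' \le \Gamma$ shows the group is killed by $[\Gamma:\Gamma']$, because $\H^s(\Gamma';-)=0$ there. Being finitely generated and torsion, it is finite.

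With this, I argue Mittag-Leffler. In the spectral sequence for the filtered tower, the groups $\E^n(B\Gamma^{(p)})$ are built from finitely many extensions of subquotients of $\H^s(B\Gamma^{(p)};\pi_{n-s}\E)$, and these are finitely generated. More precisely, I would use the skeletal spectral sequence of the tower. The obstruction to Mittag-Leffler is a nonstabilising sequence of images. Such a sequence could only come from infinitely many nontrivial differentials hitting a fixed finitely generated group. This is ruled out as follows. The portion of filtration $\le \dim\underline{E}\Gamma$ consists of finitely generated groups, but they can still have descending chains (e.g. $\bZ \supset 2\bZ \supset\cdots$). So I would handle it using the transfer. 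Consider the composite
$$\E^*(X) \to \E^*(X') \to \E^*(X),$$
where $X' \to X$ is the finite cover $B\Gamma'\to B\Gamma$ realised skeletally, i.e. the map $\underline{E}\Gamma\times_{\Gamma} E\Gamma \to$ etc. On $B\Gamma'$, which is finite, the towers are eventually constant, so $\lim^1=0$ there. Then $\lim^1$ for $B\Gamma$ is $[\Gamma:\Gamma']$-torsion, since multiplication by the index factors through it (transfer composed with restriction is multiplication by the Euler characteristic of the fibre, which is $[\Gamma:\Gamma']$, up to a unit in the relevant filtration quotients).

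Finally, a $\lim^1$ of a tower of countable groups is either zero or uncountable divisible-ish. More usefully, a $\lim^1$ of a tower of finitely generated abelian groups which is killed by an integer $N$ vanishes. The reason is that $\lim^1$ of finitely generated groups is a quotient of a profinite-type group that is divisible, being $\mathrm{Ext}$-like, and a divisible group killed by $N$ is zero. I would cite or prove this using $\lim^1 A_p \otimes$-exactness: apply $\lim^1$ to $0 \to NA_p$-type sequences, or use Jensen's theorem that $\lim^1$ of countable groups is either $0$ or uncountable and divisible when the groups are finitely generated. The main obstacle I expect is making the transfer argument rigorous at the level of the skeletal towers. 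The issue is that transfer composed with restriction is only multiplication by the index modulo higher filtration in general $\E$. To get around this, I would instead apply the argument on the $E_2$/$E_\infty$ level, where it is genuinely multiplication by $[\Gamma:\Gamma']$ on group cohomology. From there I would deduce Mittag-Leffler for each filtration quotient, and assemble by induction over the finitely many relevant quotients in each degree.
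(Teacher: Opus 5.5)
\textbf{The transfer step fails as stated.} Your strategy of transferring to the torsion-free finite-index subgroup $\Gamma'$ is a genuinely different route from the paper's, but the claimed identity does not hold. Restriction followed by transfer along $B\Gamma'\to B\Gamma$ is not multiplication by $N=[\Gamma:\Gamma']$. By the projection formula it is multiplication by $\tau=\trf^*(1)\in\pi^0_S(B\Gamma)$, and $\tau=N+\epsilon$ with $\epsilon$ of positive filtration. It agrees with $N$ only on associated graded pieces, and $\epsilon$ need not be nilpotent. For example, take $\Gamma=C_p$, $\Gamma'=1$ and $\E=\MU$. By \eqref{eq:QuillenTrf} the composite is multiplication by $\emptyset_p(x_p)$, and $\epsilon=\emptyset_p(x_p)-p$ satisfies $\epsilon^2=-p\epsilon\neq 0$ by Lemma \ref{lem:powers}. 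So factoring through the $B\Gamma'$-tower only shows that $\tau$ annihilates $\lim^1=\bigcap_p F^p\E^*(B\Gamma)$. That gives $Ny=-\epsilon y$, not $Ny=0$, so your divisibility argument never gets started.

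\textbf{The $E_2/E_\infty$ workaround does not repair this.} First, $\bigcap_pF^p\E^n(B\Gamma)$ maps to zero in every filtration quotient, so it is invisible in the spectral sequence of $B\Gamma$. Second, in a fixed total degree there are typically infinitely many nonzero filtration quotients (e.g.\ for $\E=\MU$ and $\Gamma$ with torsion), not finitely many. Third, finiteness of $\H^s(\Gamma;-)$ for large $s$ does not pass to the skeleta, whose top cohomology can be infinite (e.g.\ $\H^p(\bR\bP^p;\bZ)\cong\bZ$ for $p$ odd).

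A smaller point: for the transfers to be compatible with the skeletal towers, $B\Gamma'$ must carry the CW structure lifted from $B\Gamma$. That structure has infinitely many cells, so its tower is only pro-constant, not eventually constant. This suffices by cellular approximation, since $B\Gamma'$ is homotopy equivalent to a finite complex.

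\textbf{The fix is to use nilpotence one skeleton at a time, not globally.} Let $X=B\Gamma$. On the $p$-dimensional skeleton $X^{(p)}$ we have $\epsilon^{p+1}=0$. Hence $N^{p+1}=u\tau$ in $\pi^0_S(X^{(p)})$, where $u=\sum_{k=0}^{p}N^{p-k}(-\epsilon)^k$ is restricted from $X$. Combine this with naturality of the transfer and pro-constancy of the $B\Gamma'$-tower. For $q\gg p$ this gives
$$N^{p+1}\cdot \mathrm{im}(\E^n(X^{(q)})\to\E^n(X^{(p)}))\subseteq \mathrm{im}(\E^n(X)\to\E^n(X^{(p)})).$$
Since $\E^n(X^{(p)})$ is finitely generated, the quotient is finite, so the images stabilise. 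This is Mittag-Leffler directly, with no need for the AHSS finiteness or for divisibility of $\lim^1$.

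\textbf{Comparison with the paper.} With this repair your argument needs only finite skeleta and a finite-index subgroup with finite-dimensional classifying space. The paper instead quotes Jankovski's criterion (in Yosimura's form): completeness follows from surjectivity of the rationalised, truncated Chern--Dold character. It verifies this by comparing $B\Gamma$ with the finite complex $\underline{B}\Gamma$, to which it is rationally equivalent.
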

\begin{proof}
We use a theorem of Jankovski \cite{Jankovski} (see Yosimura \cite[Theorem 6 vii)]{Yosimura} for the form of this theorem we will use). This theorem requires $B\Gamma$ to have finite skeleta, which follows from the fact that there is a finite-index $\Gamma' \leq \Gamma$ which is torsion-free and hence acts freely on $\underline{E}\Gamma$, which is a contractible finite $\Gamma'$-CW-complex. Thus $B\Gamma$ has a finite cover which has finite skeleta, so $B\Gamma$ has finite skeleta too.

Then \cite[Theorem 6 vii)]{Yosimura} says that as long as for each $\ell$ the map
$$\E^i(B\Gamma)\otimes \bQ \lra \prod_{k \leq \ell} \H^k(B\Gamma ; \pi_{i-k}(\E)\otimes \bQ),$$
given by the Chern--Dold character \cite[p.\ 172]{AdamsGuide} with target $\prod_k \H^k(- ; \pi_{i-k}(\E)\otimes \bQ)$ composed by projection to the factors having $k \leq \ell$, 
is surjective, then $\E^*(B\Gamma)$ is complete Hausdorff. 

To verify this condition, we consider the map $B\Gamma = E\Gamma \times_\Gamma \underline{E}\Gamma \to \underline{B}\Gamma = * \times_\Gamma \underline{E}\Gamma$ and form the induced commutative diagram
\begin{equation*}
\begin{tikzcd}
\E^i(\underline{B}\Gamma)\otimes \bQ \rar \dar & \prod_{k \leq \ell} \H^k(\underline{B}\Gamma ; \pi_{i-k}(\E)\otimes \bQ) \dar\\
\E^i(B\Gamma)\otimes \bQ \rar & \prod_{k \leq \ell} \H^k(B\Gamma ; \pi_{i-k}(\E)\otimes \bQ).
\end{tikzcd}
\end{equation*}
The right-hand vertical map is an isomorphism, as the fibres of $B\Gamma \to \underline{B}\Gamma$ are classifying spaces of finite groups and hence are rationally acyclic. As $\underline{B}\Gamma$ is a finite complex, the Chern--Dold character $\ch : \E^i(\underline{B}\Gamma)\otimes \bQ \to \prod_{k} \H^k(\underline{B}\Gamma ; \pi_{i-k}(\E)\otimes \bQ)$ is an isomorphism, so the top horizontal map is surjective. Thus the clockwise composition is surjective, and hence the bottom horizontal map is too.
\end{proof}

Combined with the discussion in the previous subsection we obtain:

\begin{cor}\label{cor:MgComplete}
If $\E$ is a spectrum with each $\pi_i(\E)$ a finitely-generated abelian group, then $\E^*(\mathcal{M}_g)$ for $g \geq 2$ and $\E^*(\mathcal{M}_{1}^1)$ are complete Hausdorff.\qed
\end{cor}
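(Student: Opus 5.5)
The plan is to deduce Corollary \ref{cor:MgComplete} directly from Lemma \ref{lem:Complete}, using the identification of homotopy types recorded in Section \ref{sec:MgBackground}. By definition, $\E^*(\mathcal{M}_g)$ means the $\E$-cohomology of the homotopy type presented by the topological stack $\mathcal{M}_g = \mathcal{T}_g /\!\!/ \Gamma_g$. Since $\mathcal{T}_g$ is contractible, this homotopy type is $B\Gamma_g$, so $\E^*(\mathcal{M}_g) \cong \E^*(B\Gamma_g)$. The same holds for $\mathcal{M}_1^1$: its homotopy type is $B\SL(2,\bZ)$, because the upper half-plane is contractible. The filtration and topology are defined intrinsically on $\E^*(X)$ for $X$ with finite skeleta, so they transport along this identification.

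It then suffices to check the two hypotheses of Lemma \ref{lem:Complete} for $\Gamma = \Gamma_g$ with $g \geq 2$, and for $\Gamma = \SL(2,\bZ)$.

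For $\Gamma_g$:
\begin{enumerate}[(i)]
\item The action on $\mathcal{T}_g$ is a universal proper action, by Mislin \cite[Section 3]{Mislin}.
\item By Broughton's theorem \cite[Theorem 2.7]{Broughton}, $\mathcal{T}_g$ equivariantly deformation retracts onto a finite $\Gamma_g$-CW-complex. This retract is again a model for $\underline{E}\Gamma_g$: its $G$-fixed points are equivariant retracts of those of $\mathcal{T}_g$, hence contractible for finite $G$ and empty otherwise. So a finite $\underline{E}\Gamma_g$ exists.
\item A torsion-free subgroup of finite index exists by \cite[Theorem 6.9]{FarbMargalit}.
\end{enumerate}

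For $\SL(2,\bZ)$:
\begin{enumerate}[(i)]
\item The Bass--Serre tree of the amalgam $\bZ/4 *_{\bZ/2} \bZ/6$ is a finite $\underline{E}\SL(2,\bZ)$. It is finite because it has one orbit of edges and two orbits of vertices, and its stabilisers are finite.
\item A finite-index torsion-free subgroup exists, for instance the principal congruence subgroup of level $3$. This is the standard fact already recorded in the paper.
\end{enumerate}

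With these hypotheses verified, Lemma \ref{lem:Complete} applies to any spectrum $\E$ whose homotopy groups are all finitely generated, and it gives that $\E^*(B\Gamma)$ is complete Hausdorff in each case.

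The only point needing any care, and so the main (mild) obstacle, is the finiteness claim in (ii) for $\Gamma_g$. One has to be sure that the deformation retract is still a $\Gamma_g$-CW-model for $\underline{E}\Gamma_g$, and not merely a finite complex homotopy equivalent to $\mathcal{T}_g$. I would handle this through the fixed-point argument above: an equivariant deformation retraction restricts to a deformation retraction on the fixed points of each subgroup, so the fixed-point sets of the retract are contractible or empty exactly as those of $\mathcal{T}_g$ are. Everything else is a direct citation.
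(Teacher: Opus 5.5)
Your proposal is correct and is the paper's argument: the corollary comes from applying Lemma \ref{lem:Complete} to $\Gamma_g$ and $\SL(2,\bZ)$, whose classifying spaces model $\mathcal{M}_g$ and $\mathcal{M}_1^1$, using the finite $\underline{E}\Gamma$ and torsion-free finite-index subgroups recorded in Section \ref{sec:MgBackground}. Your added details (the fixed-point check for Broughton's retract, the Bass--Serre tree, and the level-$3$ congruence subgroup) spell out facts the paper only cites.
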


\subsection{Complex oriented cohomology theories}

Recall that a complex oriented cohomology theory means a ring spectrum $\E$ equipped with a choice of element $x = x^E \in \tilde{\E}^2(\mathbb{CP}^\infty)$ whose restriction to $\tilde{\E}^2(\mathbb{CP}^1) = \tilde{\E}^2(S^2) \cong \tilde{\E}^0(S^0)$ is the canonical element $1 \in \tilde{\E}^0(S^0)$. This yields a theory of Thom classes $u_V^E \in \tilde{\E}^{2\dim(V)}(\mathrm{Th}(V))$ for any complex vector bundle $\pi: V \to B$, as well as a theory of Chern classes $c_i^E(V) \in \E^{2i}(B)$. By construction the tautological bundle $\gamma \to \mathbb{CP}^\infty$ has $c_1^E(\gamma) = x$.

As $x \in \tilde{\E}^2(\mathbb{CP}^\infty) = F^2 \E^2(\mathbb{CP}^\infty)$ we have $x^{n+1} \in F^{2n+2} \E^{2n+2}(\mathbb{CP}^\infty)$, so this class vanishes on restriction to $\mathbb{CP}^n$. The induced map $\E^*[x]/(x^{n+1}) \to \E^*(\mathbb{CP}^n)$ can be checked to be an isomorphism by induction on $n$. It follows that the inverse system of $\{\E^*(\mathbb{CP}^n)\}_n$ is Mittag-Leffler, so taking inverse limits we find that the induced map
$$\E^*[[x]] \lra \E^*(\mathbb{CP}^\infty)$$
is an isomorphism. In particular $\E^*(\mathbb{CP}^\infty)$ is complete and Hausdorff. The analogous calculation goes through for finite products of copies of $\mathbb{CP}^\infty$. The map $\mu : \mathbb{CP}^\infty \times \mathbb{CP}^\infty \to \mathbb{CP}^\infty$ classifying the tensor product of the two tautological line bundles gives a canonical element
$$\mu^*(x) = F(x_1, x_2) \in \E^*(\mathbb{CP}^\infty \times \mathbb{CP}^\infty) = \E^*[[x_1, x_2]].$$
This is a formal group law, so has the form $x_1 + x_2 + \cdots$. Its $n$-series is
$$[n](x) := F(x, F(x, \ldots, F(x,x) \ldots)) \in \E^*[[x]],$$
with $n$ $x$'s. This has the form $[n](x) = n x + \cdots$, so is uniquely divisible by $x$ in $\E^*[[x]]$. Following Quillen, we define 
$$\emptyset_n(x) := \frac{[n](x)}{x} \in \E^*[[x]].$$

For $i : BC_n \to BS^1 \simeq \mathbb{CP}^\infty$ induced by considering $C_n \subset S^1$ as the $n$-th roots of unity, the Gysin sequence takes the form 
$$\cdots \lra \E^*[[x_n]] \overset{[n](x_n) \cdot -}\lra \E^*[[x_n]] \lra \E^*(BC_n) \lra \cdots$$
with $x_n := i^*(x)$. As long as $[n](x_n) \in \E^*[[x_n]]$ is not a zerodivisor\footnote{Which is the case in most examples of interest, e.g. integral cohomology, complex cobordism, complex $K$-theory, or whenever $\E^*$ is a local ring complete with respect to its maximal ideal \cite[Lemma 5.7]{HKR}.}, we therefore have $\E^*(BC_n) = \E^*[[x_n]]/([n](x_n))$.

\subsection{Complex cobordism}

The universal complex oriented cohomology theory is given by the complex cobordism spectrum $\MU$, with complex orientation given by the tautological map $x : \Sigma^\infty \mathbb{CP}^\infty \simeq \MU(1) \to \Sigma^{2} \MU$. Its associated formal group law is the universal formal group law, by a theorem of Quillen \cite[Theorem 6.5]{QuillenCob}, and in particular its coefficients are given by
$$\MU^* = \bZ[t_1, t_2, t_3,\ldots], \quad\quad |t_i| \in \MU^{-2i}.$$
It will be useful to know that we may take $t_1 = [\bP^1]$, and that the formal group law has the form
$$F(x_1, x_2) = x_1 + x_2  - [\bP^1]x_1 x_2 + \text{ higher order terms}$$
so that
\begin{equation}\label{eq:nSeriesMU}
[n](x) = nx - \binom{n}{2} [\bP^1] x^2 + \text{ higher order terms}.
\end{equation}

\subsubsection{Complex cobordism of $BC_n$}

In particular $\MU^*$ is a domain, so $\MU^*[[x]]$ is too, and hence $[n](x)$ is not a zerodivisor, giving $\MU^*(BC_n) = \MU^*[[x_n]]/([n](x_n))$. We record some further useful facts about it. 

\begin{thm}[Landweber \cite{Landweber}]\label{thm:MUCnFlat}
$\MU^*(BC_n)$ is a flat $\MU^*$-module (so in particular is $\bZ$-torsion-free).\qed
\end{thm}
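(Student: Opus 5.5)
The plan is to prove flatness through Landweber's exact functor theorem criterion, after reducing to prime powers. First, the reduction: write $n = \prod_p p^{a_p}$. Then $C_n \cong \prod_p C_{p^{a_p}}$, and the formal group law gives a factorisation of $[n](x)$ in $\MU^*[[x]]$ compatible with this. One should deduce
$$\MU^*(BC_n) \cong \MU^* \oplus \bigoplus_p \widetilde{\MU}^*(BC_{p^{a_p}}).$$
There are two ways to see this. Directly, $\widetilde{\MU}^*(BC_n)$ is $n$-power torsion after completing, so it splits into $p$-primary parts, and the $p$-part is detected by restriction to the Sylow subgroup $C_{p^{a_p}}$, via a transfer argument. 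Alternatively, one avoids the splitting and simply checks the criterion below prime by prime. Either way it suffices to handle $n = p^a$ and to localise or complete at $p$.

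Next I would describe $M = \MU^*(BC_{p^a}) = \MU^*[[x]]/([p^a](x))$ concretely. By the Weierstrass preparation theorem in $\MU^*_{(p)}$ after $p$-completion, $[p^a](x)$ behaves like a power series whose leading unit coefficient sits at $x^{p^{a h}}$ only modulo $(p, v_1, \dots, v_{h-1})$. The key structural fact is that $M$ is an $\MU^*\MU$-comodule: it is the cohomology of a space, and it is finitely presented in a suitable pro sense. So Landweber's filtration theorem applies to it. By the Landweber exact functor theorem, flatness of a comodule over $\MU^*$ is equivalent to the following: for each prime $p$ and each $h \geq 0$, multiplication by $v_h$ is injective on $M/(p, v_1, \dots, v_{h-1})M$, with $v_0 = p$. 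For $M = \MU^*[[x]]/([p^a](x))$ this becomes a statement about regular sequences in $\MU^*[[x]]$: the sequence $[p^a](x), p, v_1, v_2, \dots$ should be regular (suitably reordered) in $\MU^*[[x]]$.

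I would verify this regularity in order. First, $\MU^*[[x]]/(p, v_1, \dots, v_{h-1})$ is a domain, a power series ring over $\mathbb{F}_p[v_h, v_{h+1}, \dots]$ with the other generators adjoined. In it, $[p](x) \equiv v_h x^{p^h} + \cdots$, so $[p^a](x) = [p]^{\circ a}(x)$ is a nonzero element and hence a nonzerodivisor. Then regular sequences in a Noetherian-like graded local setting can be permuted, so $[p^a](x)$ may be moved to the front of the sequence. This step, justifying the permutation of a regular sequence, is the main obstacle. $\MU^*[[x]]$ is not Noetherian, and the grading and completeness need care. I would handle it by reducing to finitely many polynomial generators, replacing $\MU^*$ by $\bZ_{(p)}[v_1, \dots, v_N]$ tensored flatly, and using that these are Noetherian local after completing at $(p, v_1, \dots, v_N, x)$, where permutability of regular sequences holds. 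Finally, $\bZ$-torsion-freeness follows from flatness, since $\MU^*$ is torsion-free.
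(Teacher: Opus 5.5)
The paper does not prove this statement; it quotes it from Landweber. Your argument therefore has to stand on its own, and it has a genuine gap where you pass from the regularity conditions to flatness.

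What you actually verify is that $M=\MU^*(BC_{p^a})=\MU^*[[x]]/([p^a](x))$ is Landweber exact: $v_h$ acts injectively on $M/I_hM=(\MU^*/I_h)[[x]]/([p^a](x))$. For an arbitrary $\MU^*$-module this is strictly weaker than flatness. For example, $\K^*$ via the Todd genus is Landweber exact, but it is annihilated by $[\bP^1]^2-[\bP^2]\neq 0$, so it is not flat over the domain $\MU^*$. The upgrade ``Landweber exact $\Rightarrow$ flat'' is a theorem about genuine $\MU_*\MU$-comodules. Its proof uses the coaction as an isomorphism $\MU_*\MU\otimes_{\MU_*}M\cong M\otimes_{\MU_*}\MU_*\MU$ of uncompleted tensor products, and/or Landweber filtrations of finitely presented comodules. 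Neither applies here:
\begin{itemize}
\item The coaction $x\mapsto\sum_i b_i x^{i+1}$ has infinitely many nonzero components, since $x^{i+1}\neq 0$ in $M$ for all $i$. So it lands only in the completed $\MU_*\MU\hat\otimes M$.
\item $M$ is not even finitely generated over $\MU^*$: $M^{2k}\ni x^k\neq 0$ for all $k$, while $\MU^*$ is concentrated in non-positive degrees.
\end{itemize}
So your sentence ``finitely presented in a suitable pro sense, so Landweber's filtration theorem applies'' is exactly the unjustified step. Replacing $\otimes$ by $\hat\otimes$ in the standard argument is not formal.

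Here is what is really needed. $\MU^*$ is coherent, so $\MU^*[[x]]$ is flat by Chase's theorem. Flatness of $M$ is then equivalent to $[p^a](x)$ being a nonzerodivisor in $(\MU^*/J)[[x]]$ for every finitely generated homogeneous ideal $J\subset\MU^*$. You check this only for the invariant primes $J=I_h$. Reducing arbitrary $J$ to these is the real content of Landweber's theorem, and you would have to supply it, for instance by proving a completed version of the comodule criterion.

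Two smaller remarks. First, the step you flag as the main obstacle is not one. Since $(\MU^*/I_h)[[x]]$ and $(\MU^*/I_{h+1})[[x]]$ are domains and $[p^a](x)\neq 0$ in the latter, an equation $v_hs=[p^a](x)t$ forces $t=v_ht'$ and then $s=[p^a](x)t'$, with no Noetherian hypothesis. In particular, your $h=0$ case already proves the parenthetical $\bZ$-torsion-freeness, independently of flatness. Second, $\widetilde{\MU}^*(BC_n)$ is not $n$-power torsion; it is torsion-free, by the very statement. The correct reduction to prime powers is the stable splitting $\Sigma^\infty BC_n\simeq\bigvee_p\Sigma^\infty BC_{p^{a_p}}$ used in the proof of Corollary~\ref{cor:DistinguishCylic}.
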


\begin{lem}\label{lem:AnnOfx}
If $a \in \MU^*(BC_n)$ has $x_n \cdot a =0$, then $a = A \cdot \emptyset_n(x_n)$ for some $A \in \MU^{|a|}$.
\end{lem}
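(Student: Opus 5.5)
We work directly in $\MU^*(BC_n) = \MU^*[[x_n]]/([n](x_n))$ and write $[n](x) = x\,\emptyset_n(x)$ in $\MU^*[[x]]$. Lift $a$ to a power series $\tilde a \in \MU^*[[x]]$. The hypothesis $x_n \cdot a = 0$ says that $x\tilde a = [n](x)\, g(x)$ for some $g \in \MU^*[[x]]$, that is,
$$x\tilde a = x\,\emptyset_n(x)\, g(x).$$
Since $\MU^*[[x]]$ is a domain (as $\MU^*$ is), we may cancel $x$ to get $\tilde a = \emptyset_n(x) g(x)$. Hence $a = \emptyset_n(x_n)\, g(x_n)$ in $\MU^*(BC_n)$.

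It remains to replace $g(x_n)$ by its constant term. Write $g(x) = A + x h(x)$ with $A = g(0) \in \MU^{|a|}$, where the degree matches because everything is homogeneous and $|\emptyset_n| = 0$. Then
$$a = A\,\emptyset_n(x_n) + x_n\,\emptyset_n(x_n)\, h(x_n) = A\,\emptyset_n(x_n) + [n](x_n)\, h(x_n) = A\,\emptyset_n(x_n),$$
since $[n](x_n) = 0$ in $\MU^*(BC_n)$. This is the claimed form.

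The only point needing care is the identification $\MU^*(BC_n) = \MU^*[[x_n]]/([n](x_n))$ together with the fact that $x_n a = 0$ lifts to an equality $x\tilde a = [n](x) g$ of power series. This follows from the Gysin sequence once $[n](x)$ is known not to be a zerodivisor, which was recorded above. One should also handle gradings, since $\MU^*[[x]]$ means the graded power series ring; homogeneity of $\tilde a$ and $g$ can be arranged by taking homogeneous components, so nothing further is needed. No use of Theorem \ref{thm:MUCnFlat} is required.
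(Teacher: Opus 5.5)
Your proof is correct. Its first step (lifting $a$ to $\MU^*[[x]]$, turning $x_n a = 0$ into divisibility by $[n](x)$, and cancelling $x$) is exactly the paper's first step; the two arguments differ only in the second step.

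\begin{itemize}
\item The paper reads $g(x_n)\emptyset_n(x_n)$ as the limit of the partial sums $\bigl(\sum_{i \le j} g_i x_n^i\bigr)\emptyset_n(x_n)$. Each partial sum equals $g_0\emptyset_n(x_n)$, so the paper needs $\MU^*(BC_n)$ to be complete Hausdorff. It gets this from Lemma \ref{lem:Complete}, and hence from the Jankovski--Yosimura theorem.
\item You instead split off the whole tail at once, writing $g(x) = g(0) + x h(x)$ inside $\MU^*[[x]]$, where infinite sums are purely formal. You then use only that $\MU^*[[x]] \to \MU^*(BC_n)$ is a ring homomorphism killing $[n](x) = x\,\emptyset_n(x)$.
\end{itemize}

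This makes the completeness input unnecessary. The Gysin sequence already presents $\MU^*(BC_n)$ as a ring quotient of the power series ring, so no limits ever need to be taken in $\MU^*(BC_n)$. Your route is therefore more elementary and self-contained. The paper's phrasing in terms of convergent sums in the target matches how it later handles power series in the $\kappa$ classes, but it buys nothing extra for this lemma.

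One small point: cancelling $x$ in $\MU^*[[x]]$ does not require $\MU^*$ to be a domain, since $x$ is a non-zerodivisor in any power series ring. The domain property is only used earlier, to know that $[n](x)$ is a non-zerodivisor.
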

\begin{proof}
Representing $a$ by a series $f(x_n) \in \MU^*[[x_n]]$, the relation $x_n \cdot a =0$ means that $x_n \cdot f(x_n)$ is divisible by $[n](x_n) = x_n \cdot \emptyset_n(x_n)$ in $\MU^*[[x_n]]$, and so dividing by $x_n$  it shows that 
$$a = g(x_n) \cdot \emptyset_n(x_n)  \in \MU^*[[x_n]]/([n](x_n)) = \MU^*(BC_n)$$
for some $g(x_n) = \sum_{i=0}^\infty g_i x_n^i \in \MU^*[[x_n]]$.

We want to exploit that $x_n \cdot \emptyset_n(x_n)  =0 \in \MU^*(BC_n)$. As $\MU^*(BC_n)$ is complete and Hausdorff by Lemma \ref{lem:Complete}, the element $\left(\sum_{i=0}^\infty g_i x_n^i \right) \cdot \emptyset_n(x_n)$ agrees with the unique limit as $j \to \infty$ of the Cauchy sequence
$$\left(\sum_{i=0}^j g_i x_n^i \right) \cdot \emptyset_n(x_n) = g_0 \cdot \emptyset_n(x_n),$$
which is constant and so has limit $g_0  \cdot\emptyset_n(x_n)$. Thus $a$ has the claimed form.
\end{proof}

We will often use Quillen's theorem \cite[Proposition 4.2]{QuillenCob} that the Becker--Gottlieb transfer for the universal cyclic cover $\pi : EC_n \to BC_n$ is given by 
\begin{equation}\label{eq:QuillenTrf}
\mathrm{trf}_\pi^*(1) = \emptyset_n(x_n) \in \MU^*(BC_n).
\end{equation}
More generally:
\begin{lem}\label{lem:QuillenTrfLem}
If $p: BC_m \to BC_n$ is the finite covering space associated to the subgroup $C_m \leq C_n$ then
$$\trf_p^*(f(x_m)) = f(x_n) \cdot \emptyset_{n/m}([m](x_n)) \in \MU^*(BC_n).$$
\end{lem}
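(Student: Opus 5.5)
We reduce to Quillen's formula \eqref{eq:QuillenTrf} by the projection formula and naturality of the Becker--Gottlieb transfer under pullback. Write $q : BC_n \to BS^1$ for the map induced by the inclusion $C_n \subset S^1$, and consider the map $\phi : BC_n \to BC_{n/m}$ induced by the quotient $C_n \to C_n/C_m \cong C_{n/m}$. Our first step is to identify $p$ as a pullback. The covering $p: BC_m \to BC_n$ is the pullback along $\phi$ of the universal cover $EC_{n/m} \to BC_{n/m}$: both are the covering associated to $\ker(\phi) = C_m$. The transfer is natural for pullback squares of finite coverings, so
$$\trf_p^*(1) = \phi^*\trf_{EC_{n/m}\to BC_{n/m}}^*(1) = \phi^*\emptyset_{n/m}(x_{n/m}).$$

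The second step is to compute $\phi^*(x_{n/m})$. Under $C_n \subset S^1$, the quotient map $C_n \to C_{n/m} \subset S^1$ is the restriction of the $m$-th power map $z \mapsto z^m$ on $S^1$. It therefore classifies $\gamma^{\otimes m}$ pulled back to $BC_n$. By the definition of the formal group law, $c_1^{MU}(\gamma^{\otimes m}) = [m](x)$, which gives $\phi^*(x_{n/m}) = [m](x_n)$. Since $\phi^*$ is a continuous ring map, it sends power series to power series; this holds because $\MU^*(BC_n)$ is complete Hausdorff by Lemma \ref{lem:Complete}, or simply by working on $\MU^*(BS^1)=\MU^*[[x]]$ before restricting. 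Hence $\trf_p^*(1) = \emptyset_{n/m}([m](x_n))$.

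The third step handles general $f$. The class $x_m$ is $p^*(x_n)$, because the composite $BC_m \to BC_n \to BS^1$ is the inclusion $C_m \subset S^1$. Thus $f(x_m) = p^*f(x_n)$, again using continuity to pass power series through $p^*$, or by reducing to polynomials via completeness. The transfer satisfies the projection formula $\trf_p^*(p^*(a)\cdot b) = a \cdot \trf_p^*(b)$. Applying it with $a = f(x_n)$ and $b = 1$ gives
$$\trf_p^*(f(x_m)) = f(x_n)\cdot \emptyset_{n/m}([m](x_n)).$$

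The main point needing care is the first step, namely checking that the relevant square of coverings really is a pullback. We also need the identification $\phi^*x_{n/m}=[m](x_n)$ to be compatible with the chosen embeddings $C_k \subset S^1$ as roots of unity. With those embeddings it is, since $\zeta \mapsto \zeta^m$ sends the $n$-th roots of unity onto the $(n/m)$-th roots of unity.
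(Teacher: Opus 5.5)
Your proof is correct and follows the paper's argument: you identify $p$ as the pullback of $EC_{n/m} \to BC_{n/m}$ along the quotient map, use $\phi^*(x_{n/m}) = [m](x_n)$ with Quillen's formula \eqref{eq:QuillenTrf}, and finish with $f(x_m) = p^*f(x_n)$ and the projection formula. Your remarks about passing power series through $\phi^*$ and $p^*$ (via completeness, or by working in $\MU^*[[x]]$ first) just make explicit a detail the paper leaves implicit.
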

\begin{proof}
As $p^*(x_n)=x_m$ we have $f(x_m) = p^*(f(x_n))$, so the claim follows from the projection formula, \eqref{eq:QuillenTrf}, and the fact that $p$ is pulled back from $\pi : E C_{n/m} \to BC_{n/m}$ along $q : BC_n \to BC_{n/m}$, which satisfies $q^*(x_{n/m}) = [m](x_n)$.
\end{proof}

By naturality these formulas hold in any complex-oriented cohomology theory.

\subsubsection{Landweber--Novikov operations}\label{sec:LNops}

Let us record some limited facts about cohomology operations in $\MU$-theory: for much more see \cite[Part I]{AdamsBlue}. The universal Chern classes $c_i^{MU} \in \MU^{2i}(BU)$ correspond, under the Thom isomorphism, to certain cohomology operations $s_i \in \MU^{2i}(\MU)$. These are (some of the) Landweber--Novikov operations. If $\pi : V \to B$ is a $d$-dimensional (virtual) complex vector bundle with Thom class $u_V^{MU} \in \MU^{2d}(\mathrm{Th}(V))$, then this operation acts as $s_i(u_V^{MU}) = c_i^{MU}(V) \cdot u_V^{MU}$ by construction.

Taking $V$ to be the external sum of $n$ copies of the universal line bundle $\gamma \to \bC\bP^\infty$, it follows that $s_i(x_1 \cdots x_n) = \sigma_i(x_1, \ldots, x_n)\cdot x_1 \cdots x_n$, and restricting to the diagonal gives $s_i(x^n) = \binom{n}{i} x^{n+i} \in \MU^*(\bC\bP^\infty)$.

\subsubsection{Rationalised complex cobordism}\label{sec:RationalMU}

The spectrum $\MU_\bQ = \MU \otimes \H\bQ$ obtained by rationalising $\MU$ is complex-oriented via the rationalisation map. The Chern--Dold character
$$\mathrm{ch} : \MU_\bQ^*(-) \lra \H^*(- ; \MU^* \otimes \bQ)$$
is an isomorphism of multiplicative cohomology theories. The target of this isomorphism has another complex orientation coming from the additive formal group law on rational cohomology (tensored with $\MU^*$). In particular this gives two Thom classes $u_\gamma^{MU}, \mathrm{ch}^{-1}(u_\gamma^{H}) \in \MU_\bQ^*(\mathrm{Th}(\gamma \to \bC\bP^\infty))$, whose comparison is described as in \cite[Part II \S 9]{AdamsBlue} by
$$\mathrm{ch}^{-1}(u_\gamma^H) = \left(\sum_{n=0}^\infty \frac{[\bP^n]}{n+1} c_1^{MU}(\gamma)^{n} \right) \cdot u_\gamma^{MU}$$
and so, pulling back to the zero section, whose first Chern classes are related by
$$\mathrm{ch}^{-1}(c_1^H(\gamma)) = \sum_{n=0}^\infty \frac{[\bP^n]}{n+1} c_1^{MU}(\gamma)^{n+1}.$$

\begin{rem}\label{rem:RatIsCompleteHausdorff}
Although $\pi_*(\MU_\bQ)$ is not finitely-generated so Corollary \ref{cor:MgComplete} cannot be applied, it follows using the Chern--Dold character and the finite $\bQ$-cohomological dimension of $\mathcal{M}_g$ and $\mathcal{M}_1^1$ that $\MU_\bQ^*(\mathcal{M}_g)$ and $\MU_\bQ^*(\mathcal{M}_1^1)$ are complete Hausdorff. The same goes for any cohomology theory which is rational.
\end{rem}

\subsection{Complex $K$-theory}

For the complex $K$-theory spectrum $\K$ there is a selection of complex orientations available, and several choices are used in the literature. We take the choice $x := 1 - [\gamma^\vee] \in \tilde{\K}^0(\mathbb{CP}^\infty)$, where $\gamma$ denotes the tautological line bundle. By writing this element in $\K^0$ rather than $\K^2$ we are implicitly using the Bott periodicity isomorphism: furthermore, we are implicitly saying that we take $\beta := 1 - [\gamma^\vee] \in \tilde{\K}^0(\mathbb{CP}^1) = \tilde{\K}^{-2}(S^0)$ as our choice of Bott class. Then $\K^* = \bZ[\beta^{\pm 1}]$. As Mumford--Morita--Miller classes always have even degree, we will consistently use Bott periodicity to move all discussions into $\K^0$. With these conventions the formal group law is 
$$c_1^K(L_1 \otimes L_2) = c_1^K(L_1) + c_1^K(L_2) - c_1^K(L_1) \cdot c_1^K(L_2),$$
and so the $n$-series is
$$[n](x) = 1-(1-x)^n.$$
Again, as $\K^*$ is a domain we have $\K^0(BC_n) = \bZ[[x_n]]/(1-(1-x_n)^n)$.

We will sometimes use the Atiyah(--Segal) completion theorem \cite{AtiyahBG}: for a finite group $G$, the ring $\K^0(BG)$ is the completion of the complex representation ring $R(G)$ at its augmentation ideal $I$.

\subsection{Mumford--Morita--Miller classes}

If $\pi : E \to B$ is a surface bundle whose vertical tangent bundle $T_\pi$ is equipped with an orientation, then we may consider $T_\pi$ as a complex line bundle, classified by a map $\ell : E \to \mathbb{CP}^\infty$. The fibrewise Pontrjagin--Thom construction gives a map of spectra
$$c : \Sigma^\infty B_+ \lra \mathrm{Th}(-T_\pi \to E).$$
In a complex-oriented cohomology theory $\E$, we define fibre integration as the composition
$$\pi_!^E(-) : \E^*(E) \underset{\sim}{\overset{- \cdot u_{-T_\pi}}\lra} \E^{*-2}(\mathrm{Th}(-T_\pi \to E)) \overset{c^*}\lra \E^{*-2}(B),$$
where the first map denotes the Thom isomorphism. Using this, we define the $\E$-theory Mumford--Morita--Miller classes by
$$\kappa_i^E(\pi) := \pi_!^E(c_1^{E}(T_\pi)^{i+1}) \in \E^{2i}(B).$$
As $c_1^E(T_\pi) = \ell^*(x)$, these classes may be also be described as the pullback of the class $x^{i+1} \cdot u_{-\gamma} $ along the map of spectra
$$\Sigma^\infty B_+ \overset{c}\lra \mathrm{Th}(-T_\pi \to E) \overset{\mathrm{Th}(\ell)}\lra \mathrm{Th}(-\gamma \to \mathbb{CP}^\infty) =: \mathrm{MTSO}(2).$$
Applied to the universal family $\pi : \mathcal{C}_g \to \mathcal{M}_g$ we obtain universal classes 
$$\kappa_i^E \in \E^{2i}(\mathcal{M}_g), \quad i \geq -1.$$

\begin{rem}
One obvious difference between studying Mumford--Morita--Miller classes in ordinary cohomology versus general complex-oriented cohomology theories is that the class 
$$\kappa^{E}_{-1}(\pi) = \pi_!^{E}(1) \in \E^{-2}(B)$$
need not be zero. In fact, under Quillen's geometric interpretation \cite{QuillenCob} of complex cobordism, $\kappa^{MU}_{-1}(\pi) \in \MU^{-2}(B)$ is the class of the complex oriented map $\pi : E \to B$ itself.
\end{rem}

Composition with the ``zero-section'' map $i : \mathrm{Th}(-T_\pi \to E) \to \mathrm{Th}(-T_\pi \oplus T_\pi \to E) = \Sigma^\infty E_+$ gives (by definition \cite{BG}) the Becker--Gottlieb transfer 
$$\trf_\pi : \Sigma^\infty B_+ \overset{c}\lra \mathrm{Th}(-T_\pi \to E) \overset{i}\lra \mathrm{Th}(-T_\pi \oplus T_\pi \to E) = \Sigma^\infty E_+.$$
As $i^*(-) = (-) \cdot c_1^{E}(T_\pi) \cdot u_{-T_\pi}$, we obtain the identity $\pi_!^E(- \cdot c_1(T_\pi) ) = \trf_\pi^*(-)$, so for $i \geq 0$ we also have the description 
$$\kappa_i^E(\pi) = \trf_\pi^*(c_1^{E}(T_\pi)^{i}).$$

\begin{lem}\label{lem:FiltEstimate}
Restricting along $\iota: \{*\} \to \mathcal{M}_g$ we have
$$\iota^*\kappa^E_{-1} = (1-g)[\mathbb{P}^1]_E \in \E^{-2} \quad\quad \iota^*\kappa^E_{0} = 2-2g \in \E^0 \quad\quad \iota^*\kappa^E_j=0 \in \E^{2j} \text{ for $j>0$}.$$
More precisely, we have
\begin{align*}
\kappa^E_{-1} - (1-g)[\mathbb{P}^1]_E &\in F^1 \E^{-2}(\mathcal{M}_g)\\
\kappa^E_{0} - (2-2g) &\in F^1 \E^{0}(\mathcal{M}_g)\\
\kappa^E_{j}  &\in F^{2j} \E^{2j}(\mathcal{M}_g) \text{ for $j>0$}.
\end{align*}
\end{lem}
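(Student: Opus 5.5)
By naturality of the definition under pullback of surface bundles, $\iota^*\kappa^E_i$ is the $\kappa^E_i$ of the bundle $\Sigma_g \to \{*\}$. So the first set of identities reduces to computing $\pi^E_!(c_1^E(T\Sigma_g)^{i+1}) \in \E^{2i}$ for the map $\pi:\Sigma_g \to *$. Moreover, everything is pulled back from $\MU$ along the cycle map. It therefore suffices to work in $\MU$ and then push forward to $\E$, since $[\mathbb{P}^1]_E$ is by definition the image of $[\mathbb{P}^1]$.

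\textbf{The point computation.}
\begin{itemize}
\item For $j>0$ the class $\iota^*\kappa^E_j$ lies in $\E^{2j}$ with $2j>0$. Moreover $c_1(T\Sigma_g)^{j+1}$ already vanishes on the 2-dimensional manifold $\Sigma_g$ for $j \geq 1$, since its degree $2j+2>2$ exceeds the dimension; it lies in $F^{2j+2}\E^*(\Sigma_g)=0$. So this case is clear.
\item For $j=0$ we have $\kappa_0 = \trf^*_\pi(1)$. The Becker--Gottlieb transfer of $\Sigma_g \to *$ induces multiplication by the Euler characteristic $\chi(\Sigma_g)=2-2g$ on $\E^0$, since the stable map $S^0 \to S^0$ has degree $\chi$.
\item For $j=-1$ we use Quillen's geometric interpretation: $\pi^{MU}_!(1)=[\Sigma_g] \in \MU^{-2}$. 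Since $\MU^{-2}\cong\bZ\{[\mathbb{P}^1]\}$ is detected by the Chern number $\langle c_1,[M]\rangle$, we get $[\Sigma_g] = \tfrac{2-2g}{2}[\mathbb{P}^1]=(1-g)[\mathbb{P}^1]$.
\end{itemize}

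\textbf{Filtration statements.} The plan is to use naturality along skeleta. The filtration $F^p$ is defined via restriction to the $(p-1)$-skeleton, and $\mathcal{M}_g \simeq B\Gamma_g$ is connected. Hence the 0-skeleton can be taken to be a point, and $F^1\E^*(\mathcal{M}_g)=\ker\iota^*$. The cases $j=-1$ and $j=0$ then follow immediately from the point computation above.

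For $j>0$ I need $\kappa_j^E$ to restrict to zero on the $(2j-1)$-skeleton. I would use the description of $\kappa^E_j$ as the pullback of the class $x^{j+1}u_{-\gamma}$ along the map of spectra $\Sigma^\infty B_+ \to \mathrm{MTSO}(2)$. The Thom spectrum $\mathrm{Th}(-\gamma\to\mathbb{CP}^\infty)$ has a cell structure with one cell in each even degree $\geq -2$, the cell in degree $2k-2$ corresponding to $x^k u_{-\gamma}$. The class $x^{j+1}u_{-\gamma}$ lies in $\tilde\E^{2j}$ and vanishes on the skeleton $\mathrm{Th}(-\gamma|_{\mathbb{CP}^{j}})$, whose top cell is in dimension $2j-2$. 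By cellular approximation, the $(2j-1)$-skeleton of $B$ maps stably into the $(2j-1)$-skeleton of $\mathrm{MTSO}(2)$, which is contained in $\mathrm{Th}(-\gamma|_{\mathbb{CP}^j})$. Therefore the class restricts to zero, giving $\kappa_j^E \in F^{2j}$.

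\textbf{Expected obstacle.} The main care is in this cellular-approximation step. It requires making sense of skeleta for spectra and checking that the filtration on $\Sigma^\infty B_+$ by suspension spectra of skeleta agrees with the space-level filtration $F^p$; this holds because $\Sigma^\infty (B^{(p-1)})_+$ is a CW spectrum of dimension $p-1$.
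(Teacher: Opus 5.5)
Your argument is correct, but for the substantive part, $\kappa^E_j\in F^{2j}$ for $j>0$, it takes a different route from the paper.

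The paper works unstably on the total space. Over a $p$-skeleton $\mathcal{M}_g^{(p)}$, the restricted family has total space of dimension at most $p+2$. Meanwhile $c_1^{MU}(T_\pi)^{j+1}=\ell^*(x)^{j+1}$ lies in $F^{2j+2}$ by multiplicativity of the filtration. So the integrand itself vanishes there once $p<2j$, and naturality of fibre integration under base change gives $\kappa^{MU}_j\vert_{\mathcal{M}_g^{(p)}}=0$.

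You instead work stably and universally, via the Pontrjagin--Thom map to $\mathrm{MTSO}(2)$ and the $(2j-1)$-connectivity of the cofibre of $\mathrm{Th}(-\gamma\vert_{\mathbb{CP}^j})\to\mathrm{MTSO}(2)$. The paper's version needs only space-level cellular approximation and a dimension count for a surface bundle over a skeleton. Yours needs CW spectra, but it exhibits the estimate as a property of the universal class $x^{j+1}u_{-\gamma}$. That is how the estimate is effectively used later, for convergence of $\sum_j a_j\kappa^E_{j-1}$ in the lemma on cohomology operations.

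For the values at a point, the paper reduces to $\MU$. It uses $\MU^{2j}=0$ for $j>0$, and that $\MU^0=\bZ$ is detected in $\H^0$. You work directly in $\E$ instead, using $c_1^E(T\Sigma_g)^{j+1}\in F^{2j+2}\E^*(\Sigma_g)=0$ and the fact that $\pi\circ\trf_\pi$ is the degree $\chi(\Sigma_g)$ self-map of $S^0$; the latter even works in stable cohomotopy. Note that it is this filtration argument that gives the vanishing, not your remark that the degree exceeds the dimension. That remark is false as a general principle in a periodic theory such as $\K$.

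You also make explicit that $F^1=\ker\iota^*$ because $\mathcal{M}_g$ is connected, which the paper leaves implicit.
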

\begin{proof}
We may establish this universally in the theory $\E=\MU$. Restricted to a point, we are forming the pushforwards along $\pi : \Sigma_g \to \{*\}$ of powers of the class $c_1^{MU}(T\Sigma_g) \in \MU^2(\Sigma_g)$. Tautologically $\pi_!(1) = [\Sigma_g] \in \MU^{-2} = MU_2$ is the cobordism class of the oriented surface of genus $g$, and by computing the first Chern number this is cobordant to $(1-g)$ copies of the projective line $\mathbb{P}^1$. The number $\pi_!(c_1^{MU}(T\Sigma_g)) \in \MU^0=\MU_0=\bZ$ can be determined by applying $\MU \to \H$, so it is $\int_{\Sigma_g} c_1^H(T\Sigma_g)=2-2g$. For $j>0$ we have $\MU^{2j} = \MU_{-2j}=0$, so necessarily $\iota^*\kappa_j^{MU}=0$.%

The class $x \in \MU^2(\mathbb{CP}^\infty)$ vanishes when restricted to the 1-skeleton, so we have $c_1^{MU}(T_\pi) = \ell^*(x) \in F^2 \MU^2(\mathcal{C}_g)$, and so $c_1^{MU}(T_\pi)^{j+1} \in F^{2j+2} \MU^{2j+2}(\mathcal{C}_g)$. If $\mathcal{M}_g^{(p)}$ is a $p$-skeleton then $\mathcal{C}_g\vert_{\mathcal{M}_g^{(p)}}$ is a surface bundle over it so it has dimension $\leq p+2$, and hence $c_1^{MU}(T_\pi)^{j+1}$ vanishes on it if $p+2 < 2j+2$. But then $\kappa^{MU}_j\vert_{\mathcal{M}_g^{(p)}}$ vanishes if $p+2 < 2j+2$, so $\kappa^{MU}_j$ lies in $F^{2j} \MU^{2j}(\mathcal{M}_g)$.
\end{proof}

For a complex-oriented cohomology theory $\E$ for which $\E^*(\mathcal{M}_g)$ is complete Hausdorff (e.g.\ if the homotopy groups of $\E$ are finitely-generated, by Corollary \ref{cor:MgComplete}, or if $\E$ is rational as in Remark \ref{rem:RatIsCompleteHausdorff}) we let the \emph{tautological ring} $R\E^*(\mathcal{M}_g)$ denote the complete subring generated by $\kappa^E_{-1}, \kappa^E_{0}, \kappa^E_{1}, \ldots$.

\subsubsection{Cohomology operations}

Supposing that $\E^*(\mathcal{M}_g)$ is complete Hausdorff, so that $R\E^*(\mathcal{M}_g)$ is defined, we have the following:

\begin{lem}
Stable $\E$-theory cohomology operations preserve $R\E^*(\mathcal{M}_g)$.
\end{lem}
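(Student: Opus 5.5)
The plan is to work universally on the Madsen--Tillmann spectrum. By the description above, $\kappa_i^E$ is the pullback along the map of spectra $\alpha := \mathrm{Th}(\ell)\circ c : \Sigma^\infty \mathcal{M}_{g,+} \to \mathrm{MTSO}(2)$ of the class $x^{i+1}\cdot u_{-\gamma} \in \E^{2i}(\mathrm{MTSO}(2))$. A stable cohomology operation $\theta$ commutes with pullback along any map of spectra, and in particular along $\alpha$. Hence $\theta(\kappa_i^E) = \alpha^*\theta(x^{i+1} u_{-\gamma})$, so the first step is to understand $\theta$ on $\E^*(\mathrm{MTSO}(2))$.

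By the Thom isomorphism, $\E^*(\mathrm{MTSO}(2)) \cong \E^*(\mathbb{CP}^\infty)\cdot u_{-\gamma} = \E^*[[x]]\cdot u_{-\gamma}$. Every element of $\E^*(\mathrm{MTSO}(2))$ is therefore a convergent series $\sum_{j\ge 0} a_j x^j u_{-\gamma}$ with $a_j \in \E^*$. Here convergence is meant in the skeletal topology, which is complete Hausdorff by the Mittag-Leffler argument recalled for $\mathbb{CP}^\infty$ and transported through the Thom isomorphism. Writing $\theta(x^{i+1}u_{-\gamma}) = \sum_j a_j x^j u_{-\gamma}$, I pull back along $\alpha$. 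Since $\alpha^*(x^j u_{-\gamma}) = \pi_!^E(c_1^E(T_\pi)^j) = \kappa^E_{j-1}$, I expect
$$\theta(\kappa^E_i) = \sum_{j \geq 0} a_j \kappa^E_{j-1}.$$

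The step needing care is the passage of $\alpha^*$ through the infinite sum. Only finitely many terms of each degree are nonzero when $\E^*$ is bounded, but in general the sum is genuinely infinite. The partial sums converge in the skeletal topology on $\E^*(\mathrm{MTSO}(2))$, since $x^j u_{-\gamma}$ vanishes on a skeleton of dimension roughly $2j-2$. Maps of spectra (after cellular approximation) are filtration-preserving up to a fixed shift, so $\alpha^*$ is continuous. Lemma \ref{lem:FiltEstimate} shows directly that $\kappa^E_{j-1} \in F^{2j-2}$, so the sum converges in $\E^*(\mathcal{M}_g)$. Because $\E^*(\mathcal{M}_g)$ is Hausdorff, the limit is unique and equals $\alpha^*$ of the limit. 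The right-hand side is thus a convergent $\E^*$-linear combination of tautological classes, and it lies in $R\E^*(\mathcal{M}_g)$ because that ring is defined to be complete.

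It remains to handle products and limits. For products, I would check that the operations being considered are multiplicative, or satisfy a Cartan formula, as with the Landweber--Novikov operations. In that case $\theta$ of a monomial in the $\kappa$'s is a finite or convergent combination of products of the $\theta'(\kappa_i^E)$, and these lie in $R\E^*(\mathcal{M}_g)$. For a general additive stable operation, I would instead consider the product of several copies of $\mathrm{MTSO}(2)$. A monomial $\kappa_{i_1}\cdots\kappa_{i_r}$ is the pullback of $\prod_s x_s^{i_s+1}u$ along the diagonal composite $\Sigma^\infty\mathcal{M}_{g,+} \to \mathrm{MTSO}(2)^{\wedge r}$. Here $\E^*(\mathrm{MTSO}(2)^{\wedge r}) \cong \E^*[[x_1,\dots,x_r]]\cdot u$, and the same argument expresses $\theta$ of the monomial as a convergent series in products of $\kappa$'s. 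Finally, $\theta$ is continuous, since stable operations are induced by maps of spectra and hence preserve the skeletal filtration. It therefore commutes with limits, so it preserves the closure $R\E^*(\mathcal{M}_g)$. I expect the main obstacle to be the convergence and continuity bookkeeping, particularly justifying continuity of $\theta$ on the completed ring.
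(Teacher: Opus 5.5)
Your argument for the generators is the paper's. You expand $\theta(x^{i+1}u_{-\gamma})$ as a power series times $u_{-\gamma}$ in $\E^*(\mathrm{MTSO}(2))$, pull back along $\mathrm{Th}(\ell)\circ c$, and use completeness of $\E^*(\mathcal{M}_g)$ with Lemma~\ref{lem:FiltEstimate} to interpret $\sum_j a_j\kappa^E_{j-1}$; you are a little more explicit than the paper about why the pullback commutes with the infinite sum. The paper's proof stops at the generators, so your further steps are additions: pulling back from $\mathrm{MTSO}(2)^{\wedge r}$ along the diagonal, then noting that $\theta$ preserves the skeletal filtration by naturality and hence passes to the closure. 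These correctly supply what the paper leaves implicit. Your text only mentions monomials, but applying the same argument to $a\cdot\prod_s x_s^{i_s+1}u$ with $a\in\E^*$ also covers $\E^*$-multiples, which is needed because a general stable operation is neither multiplicative nor $\E^*$-linear.
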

\begin{proof}
If $\psi$ is a stable $\E$-theory cohomology operation then
$$\psi(\kappa^E_i) = c^* \mathrm{Th}(\ell)^*(\psi(x^{i+1} \cdot u_{-\gamma}^E)),$$
and $\psi(x^{i+1} \cdot u_{-\gamma} ) =  f_{\psi, i}(x) \cdot u_{-\gamma}^E  \in \E^*(\mathrm{MTSO}(2)) =   \E^{*+2}(\mathbb{CP}^\infty) \cdot u_{-\gamma}^E$ for a unique power series $f_{\psi, i}(x) = \sum_{j=0}^\infty a_j \cdot x^j \in \E^*(\mathbb{CP}^\infty) = \E^*[[x]]$. Hence
$$\psi(\kappa^E_i) = \sum_{j=0}^\infty a_j \cdot \kappa^E_{j-1} \in \E^*(\mathcal{M}_g),$$
where we use the completeness of $\E^*(\mathcal{M}_g)$ and the filtration estimates in Lemma~\ref{lem:FiltEstimate} to interpret the right-hand side as a convergent Cauchy sequence.
\end{proof}

In $\MU$-theory, the action of the Landweber--Novikov operations has the following simple description.

\begin{lem}\label{lem:LNOps}
We have 
\begin{enumerate}[(i)]
\item $s_{i+1}(\kappa_{-1}^{MU}) = (-1)^{i+1} \kappa_i^{MU} \in \MU^{2i}(\mathcal{M}_g)$,

\item $s_j(\kappa_n^{MU}) = \binom{n}{j} \kappa_{n+j}^{MU} \in \MU^{2n+2j}(\mathcal{M}_g)$ for $n \geq 0$.

\end{enumerate}
\end{lem}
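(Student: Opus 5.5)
The plan is to compute everything universally on the Thom spectrum $\mathrm{MTSO}(2) = \mathrm{Th}(-\gamma \to \mathbb{CP}^\infty)$ and then pull back. By the description preceding the previous lemma, $\kappa_n^{MU} = c^*\mathrm{Th}(\ell)^*(x^{n+1}\cdot u_{-\gamma}^{MU})$. Stable operations commute with pullback along maps of spectra, so it suffices to compute $s_j(x^{n+1}\cdot u^{MU}_{-\gamma}) \in \MU^*(\mathrm{MTSO}(2))$ as an explicit power series in $x$ times $u_{-\gamma}^{MU}$. Then, exactly as in the proof of the previous lemma, each monomial $x^{m+1}u_{-\gamma}^{MU}$ pulls back to $\kappa^{MU}_m$. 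Here the answers will be single monomials, so no completeness argument is even needed.

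To carry out the computation I will package the operations into the total Landweber--Novikov operation $s_t := \sum_{j\ge 0} s_j t^j$, restricted to the $s_j$ of \S\ref{sec:LNops}. I will use the standard Cartan formula: $s_t$ is multiplicative, including for the module product of $\MU^*(\mathbb{CP}^\infty)$ on $\MU^*(\mathrm{Th}(-\gamma))$. This is the one input not literally recorded earlier, and I will cite it from \cite[Part I]{AdamsBlue}. It follows from the Whitney sum formula for the $c_i^{MU}$, since $s_t$ corresponds to the total Chern class under the Thom isomorphism.

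With the Cartan formula in hand, the two factors are computed separately:
\begin{itemize}
\item From $s_i(x^{m}) = \binom{m}{i}x^{m+i}$ we get $s_t(x^{n+1}) = x^{n+1}(1+xt)^{n+1}$.
\item From $s_i(u_V^{MU}) = c_i^{MU}(V)u_V^{MU}$ applied to the virtual bundle $V=-\gamma$ we get $s_t(u_{-\gamma}^{MU}) = c_t(-\gamma)\,u_{-\gamma}^{MU} = (1+xt)^{-1}u_{-\gamma}^{MU}$, because the total Chern class of $-\gamma$ is the inverse of $c_t(\gamma)=1+xt$.
\end{itemize}
Multiplying these gives
$$s_t(x^{n+1}u_{-\gamma}^{MU}) = x^{n+1}(1+xt)^{n}\,u_{-\gamma}^{MU}.$$

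Extracting coefficients of $t^j$ now gives both parts. For $n\ge 0$ the coefficient of $t^j$ is $\binom{n}{j}x^{n+j+1}u_{-\gamma}^{MU}$, which pulls back to $\binom{n}{j}\kappa^{MU}_{n+j}$; this is (ii). For $n=-1$ we get $(1+xt)^{-1} = \sum_j (-1)^j x^j t^j$, so $s_{i+1}(u_{-\gamma}^{MU}) = (-1)^{i+1}x^{i+1}u_{-\gamma}^{MU}$, which pulls back to $(-1)^{i+1}\kappa_i^{MU}$; this is (i).

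The main point requiring care is justifying the formula $s_t(u_{-\gamma}) = c_t(-\gamma)u_{-\gamma}$ for the virtual bundle. I would handle it by noting that the defining property stated in \S\ref{sec:LNops} already allows virtual bundles. Alternatively, one can check it by applying the Cartan formula to $u_{-\gamma}\cdot u_{\gamma} = u_0$ and using $s_t(u_0)=u_0$.
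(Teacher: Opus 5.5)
Your argument is correct. Part (i) is exactly the paper's computation: $s_{i+1}(u_{-\gamma}) = c^{MU}_{i+1}(-\gamma)\cdot u_{-\gamma} = (-1)^{i+1}x^{i+1}\cdot u_{-\gamma}$ on $\mathrm{MTSO}(2)$.

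For (ii) you take a different route. The paper uses three facts:
\begin{enumerate}[(a)]
\item the transfer description $\kappa_n^{MU} = \trf_\pi^*(c_1^{MU}(T_\pi)^n)$ for $n \geq 0$;
\item $s_j$ commutes with $\trf_\pi^*$, since the transfer is a map of spectra;
\item the formula $s_j(x^n) = \binom{n}{j}x^{n+j}$ already recorded in Section~\ref{sec:LNops}.
\end{enumerate}
So the paper needs nothing beyond what is stated there, and never has to multiply a class by the virtual Thom class $u_{-\gamma}$.

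You instead stay on $\mathrm{MTSO}(2)$. This requires the Cartan formula for the total operation $s_t$ with respect to the $\MU^*(\mathbb{CP}^\infty)$-module structure on $\MU^*(\mathrm{MTSO}(2))$. That formula is standard (it is the Whitney sum formula, as you say), but it is an extra input.

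In exchange, the single identity $s_t(x^{n+1}\cdot u_{-\gamma}) = x^{n+1}(1+xt)^n\cdot u_{-\gamma}$ handles both parts uniformly. It shows that (i) is literally (ii) at $n=-1$, since $\binom{-1}{i+1} = (-1)^{i+1}$. It also makes visible that the drop from $\binom{n+1}{j}$ to $\binom{n}{j}$ comes from the factor $c_t(-\gamma) = (1+xt)^{-1}$. In the paper's proof this factor is hidden in the identity $i^*(-) = (-)\cdot c_1(T_\pi)\cdot u_{-T_\pi}$, which converts $\pi_!$ into the transfer.
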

\begin{proof}
For (i), in $\MU^*(\mathrm{MTSO}(2)) = \MU^*(\mathbb{CP}^\infty) \cdot u_{-\gamma}$ we have
$$s_{i+1}(u_{-\gamma}) = c_{i+1}^{MU}(-\gamma) \cdot u_{-\gamma} = (-1)^{i+1} x^{i+1} \cdot u_{-\gamma}$$
as $c^{MU}(-\gamma) = \tfrac{1}{c^{MU}(\gamma)} = \tfrac{1}{1+x} \in \MU^*(\mathbb{CP}^\infty)$.

For (ii), we use that for $n \geq 0$ we have $\kappa_n^{MU} = \trf_\pi^*(c_1^{MU}(T_\pi)^n)$, that the stable cohomology operation $s_j$ commutes with $\trf_\pi^*$, and the formula $s_j(c_1^{MU}(T_\pi)^n) = \binom{n}{j} c_1^{MU}(T_\pi)^{n+j}$ from Section \ref{sec:LNops}.
\end{proof}

We may view the Chern--Dold character $\mathrm{ch} : \MU^*(-) \to \H^*(- ; \MU^* \otimes \bQ)$ as a cohomology operation between two different cohomology theories. When the target is equipped with the additive formal group law, the discussion in Section \ref{sec:RationalMU} explains how the theories of Chern classes and fibre integration in these two theories interact. Writing $g(z) := \sum_{n=0}^\infty \frac{[\bP^n]}{n+1} z^{n+1} \in\MU_\bQ^*[[z]]$, so that $\mathrm{ch}(x^{MU}) = g^{-1}(x^H) \in \H^*(\bC\bP^\infty ; \MU^* \otimes \bQ)$, it follows from that discussion that on $\mathrm{MTSO}(2)$ we have
$$\mathrm{ch}((x^{MU})^{i+1} \cdot u_{-\gamma}^{MU}) = g^{-1}(x^H)^{i} \cdot x^H \cdot u^H_{-\gamma},$$
and therefore
\begin{equation}\label{eq:ChOnKappas}
\mathrm{ch}(\kappa_i^{MU}) = \pi_!^H(g^{-1}(c_1^H(T_\pi))^{i} \cdot c_1^H(T_\pi)).
\end{equation}
In particular, $\mathrm{ch}(\kappa_i^{MU})$ is expressed as a (finite, as $\H^*(\mathcal{M}_g;\bQ)$ vanishes in high enough degrees) linear combination of $\kappa^H_i, \kappa^H_{i+1}, \kappa^H_{i+2}, \ldots$ with coefficients in $\MU^* \otimes \bQ$ (and coefficient of $\kappa_i^H$ equal to 1). From this it follows that the Chern--Dold character induces an isomorphism
\begin{equation}\label{eq:MURatTaut}
R\MU^*_\bQ(\mathcal{M}_g) \cong R\H^*(\mathcal{M}_g;\bQ) \otimes_\bZ \MU^*.
\end{equation}

\subsection{Hodge bundles}

If $\Sigma$ is a Riemann surface then there are associated vector spaces $H_q := H^0(\Sigma ; (T^\vee\Sigma)^{\otimes q})$ of holomorphic $q$-differentials. When $g(\Sigma) > 1$ these vanish for $q < 0$, so in this case their dimensions $h_q := \dim H_q$ may be calculated by Riemann--Roch, to be
$$h_q = \begin{cases}
0 & q < 0 \\
1 & q=0 \\
g & q=1 \\
(2q-1)(g-1) & q > 1.
\end{cases}$$
As their dimensions are constant, they assemble to vector bundles $\mathcal{H}_q \to \mathcal{M}_g$. The bundle $\mathcal{H}_0$ is the trivial 1-dimensional vector bundle. By Grothendieck--Riemann--Roch and Serre duality, if $\pi : \mathcal{C}_g \to \mathcal{M}_g$ is the universal family of Riemann surfaces then
$$\pi_!^K((T_\pi^\vee)^{\otimes q}) = \mathcal{H}_q - \mathcal{H}_{1-q}^\vee \in \K^0(\mathcal{M}_g).$$

In particular, using that $c_1^K(T_\pi) := 1 - T_\pi^\vee$ the first few $K$-theoretic Mumford--Morita--Miller classes are
\begin{align*}
\kappa^K_{-1} = \pi_!^K(1) &= 1 - \mathcal{H}_1^\vee\\
\kappa^K_{0} = \pi_!^K(1- T_\pi^\vee) &= 2 - \mathcal{H}_1 - \mathcal{H}_1^\vee\\
\kappa^K_{1} = \pi_!^K((1- T_\pi^\vee)^{\otimes 2}) &= 3 - 2\mathcal{H}_1 - \mathcal{H}_1^\vee  + \mathcal{H}_2. 
\end{align*}
Notable here is that $\mathcal{H}_2 \cong T^\vee \mathcal{M}_g$, so $\kappa_{-1}^K-2\kappa_0^K + \kappa_1^K$ is the class of the cotangent bundle of $\mathcal{M}_g$. Continuing in this way we see that for each $q \geq 1$, $\mathcal{H}_q$ is explicitly expressible in terms of $\kappa_{-1}^K, \kappa_0^K, \ldots, \kappa^K_{q-1}$.

\section{The variety of $\E^*(\mathcal{M}_g)\otimes \bQ$}

For a group $G$ let $\mathcal{A}(G)$ denote the category whose objects are the finite abelian subgroups of $G$ of prime power order, and whose morphisms are those homomorphisms induced by conjugation in $G$. A generalisation of Artin's theorem due to Hopkins--Kuhn--Ravenel \cite[Theorem A]{HKR} states that for a finite group $G$ and a complex-oriented cohomology theory $\E$ the natural map
\begin{equation}\label{eq:hkr}
\E^*(BG) \otimes \bQ \lra \lim_{A \in \mathcal{A}(G)} \E^*(BA)\otimes \bQ
\end{equation}
is an isomorphism (in fact, it is sufficient to invert $\vert G \vert$ rather than rationalising). By combining a version of this theorem with Quillen's work on varieties of group cohomology rings \cite{QuillenSpec}, we obtain a version applicable to the mapping class group.

\begin{thm}\label{thm:ModNil}
For $g \geq 2$ the map
$$\E^*(B\Gamma_g) \otimes \bQ \lra \lim_{A \in \mathcal{A}(\Gamma_g)} \E^*(BA)\otimes \bQ$$
is surjective with nilpotent kernel. The same holds for $\Gamma_{1}^1$.
\end{thm}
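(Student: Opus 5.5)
We will compare $B\Gamma$ (for $\Gamma = \Gamma_g$ or $\Gamma_1^1$) with its proper quotient through the equivariant structure of the finite $\underline{E}\Gamma$ from Section \ref{sec:MgBackground}. Write $X := \underline{E}\Gamma$, a finite $\Gamma$-CW-complex with finite stabilisers, so that $B\Gamma \simeq E\Gamma \times_\Gamma X$. Filtering by the skeleta of $X/\Gamma$ gives a spectral sequence (the isotropy spectral sequence)
$$E_1^{p,q} = \prod_{\sigma \in X_p/\Gamma} \E^q(B\Gamma_\sigma) \Longrightarrow \E^{p+q}(B\Gamma),$$
with finitely many cells in each dimension and finitely many nonzero columns. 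Rationalise, and apply the Hopkins--Kuhn--Ravenel isomorphism \eqref{eq:hkr} to each finite stabiliser $\Gamma_\sigma$. This identifies the $E_1$-page with the cochains of the functor $\sigma \mapsto \lim_{A \in \mathcal{A}(\Gamma_\sigma)} \E^*(BA)\otimes\bQ$ on the cells of $X/\Gamma$. The first step is to construct the comparison map
$$\E^*(B\Gamma)\otimes\bQ \to \lim_{A \in \mathcal{A}(\Gamma)} \E^*(BA)\otimes \bQ,$$
which is just restriction to subgroups and is compatible with conjugation. Since every finite subgroup of $\Gamma$ fixes a point of $X$, every $A \in \mathcal{A}(\Gamma)$ lies in some $\Gamma_\sigma$.

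\textbf{Surjectivity.} I would follow Quillen's strategy from \cite{QuillenSpec}. Consider the $\Gamma$-space $\coprod_{A} X^A$, or more cleanly the category of pairs $(A,\sigma)$ with $\sigma$ a cell of $X^A$. Because $X^A$ is contractible for each finite $A$, the limit over $\mathcal{A}(\Gamma)$ can be rewritten as a limit over the cells of $X/\Gamma$ of the local limits $\lim_{\mathcal{A}(\Gamma_\sigma)}$. Concretely, a compatible family $(a_A)$ restricts, by HKR, to a compatible family of classes $b_\sigma \in \E^*(B\Gamma_\sigma)\otimes\bQ$. This family is a $0$-cocycle in the $E_1$-page, i.e.\ an element of $E_2^{0,*}$. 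To show it survives, I would argue that the rationalised spectral sequence has a particularly simple form. Rationally, $\lim$ over cells of a coefficient system whose values are of the form $\E^*(BA)\otimes\bQ$ should have vanishing higher derivatives after tensoring, by a transfer argument. The coefficient system $\sigma\mapsto \E^*(B\Gamma_\sigma)\otimes\bQ$ is a retract, via transfers from a finite-index torsion-free subgroup $\Gamma'$ (composed with dividing by indices), of something computed from $\E^*(B\Gamma')\otimes\bQ$ and the Chern--Dold character. One then shows the differentials out of the $0$-line vanish rationally. Alternatively, one can avoid this by using the torsion-free finite-index normal subgroup $\Gamma'$: then $\E^*(B\Gamma)\otimes\bQ \to \E^*(B\Gamma'\times_{\Gamma/\Gamma'} E(\Gamma/\Gamma'))$-type comparisons reduce to the finite group $G=\Gamma/\Gamma'$ acting on the finite complex $B\Gamma'$. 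For a finite group $G$ acting on a finite complex $Y$, HKR-type character theory applies to the Borel construction $EG\times_G Y$, giving
$$\E^*(EG\times_G Y)\otimes\bQ \cong \Big(\textstyle\prod_{A} \E^*(BA \times Y^{A})\otimes\bQ\Big)^{\sim},$$
suitably invariant. I would use this formulation, since it is the HKR generalised character theorem for $G$-spaces. Here $EG\times_G B\Gamma' \simeq B\Gamma$ when $B\Gamma'$ is modelled as $X/\Gamma'$ with its $G$-action, and $Y^A = X^A/\!\!\sim$ is rationally contractible. Surjectivity then follows by projecting onto the part with $Y^A$ replaced by a point.

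\textbf{Nilpotence of the kernel.} Finite-dimensionality gives this. An element $a$ in the kernel restricts to zero on every $B\Gamma_\sigma$ rationally, and so it lies in the first filtration $F^1$ of the isotropy spectral sequence. Since the sequence has only $\dim X + 1$ columns and the filtration is multiplicative, $a^{\dim X+1}$ lies in $F^{\dim X+1}=0$. More precisely, some integer multiple of $a$ restricts to zero over each cell, and so $(Na)^{\dim X+1}=0$. The case $\Gamma_1^1$ is identical, using its finite tree $\underline{E}\SL(2,\bZ)$.

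\textbf{Main obstacle.} The main obstacle is the surjectivity step, namely showing that a compatible family over $\mathcal{A}(\Gamma)$ actually lifts. Here I would first try the equivariant HKR theorem for the $\Gamma/\Gamma'$-space $X/\Gamma'$, together with rational contractibility of the fixed-point sets. This should make the edge map onto $E_2^{0,*}$ rationally surjective, with the limit over $\mathcal{A}(\Gamma)$ matching $E_2^{0,*}$.
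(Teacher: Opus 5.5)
\textbf{Nilpotence.} This part of your argument is sound. By \eqref{eq:hkr} for each finite stabiliser, some integer multiple of a kernel element vanishes on $E\Gamma\times_\Gamma \underline{E}\Gamma^{(0)}$, so it lies in the first skeletal filtration, and multiplicativity together with finite dimensionality kills a power. This is essentially Theorem \ref{thm:FinSubgpDetNilp} combined with \eqref{eq:hkr}. It is a reasonable alternative to the paper's use of the Serre filtration on $BA\times Y^A$.

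\textbf{Surjectivity: the transfer route.} The gap is in surjectivity, and neither of your two routes establishes it. The transfer route fails as stated. Restricting $\E^*(BF)\otimes\bQ$ to the trivial subgroup and transferring back is multiplication by $\mathrm{Ind}_{\{e\}}^F(1)$. For $\E=\K$ or $\MU$ this is not a unit: for $F=C_n$ it is $\emptyset_n(x_n)$, which is annihilated by $x_n$. So $\sigma\mapsto\E^*(B\Gamma_\sigma)\otimes\bQ$ is not a retract of anything seen by the torsion-free subgroup $\Gamma'$. This is exactly what distinguishes general complex-oriented theories from rational ordinary cohomology. The vanishing of differentials out of the $0$-line is therefore left unproved.

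\textbf{Surjectivity: the HKR route.} Your second route is the one the paper takes, but its decisive step is false: $Y^A=(\underline{E}\Gamma/\Gamma')^A$ is not rationally contractible. It is a finite disjoint union of the spaces $\underline{E}\Gamma^{\tilde A}/C_{\Gamma'}(\tilde A)\simeq BC_{\Gamma'}(\tilde A)$. The union runs over $\Gamma'$-conjugacy classes of subgroups $\tilde A\leq\Gamma$ mapping isomorphically onto $A$. Already $Y^{\{e\}}=B\Gamma'$ is not rationally acyclic in general; for $\SL(2,\bZ)$ it is a wedge of at least two circles. For $A\neq\{e\}$, $Y^A$ is typically disconnected. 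Replacing each $Y^A$ by a point would produce an end indexed by $\mathcal{A}(G)$, which by HKR for $G$ is just $\E^*(BG)\otimes\bQ$. That is the wrong target.

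\textbf{What is needed instead.} Two steps replace the contractibility claim.
\begin{enumerate}
\item[(a)] Use the edge map $\E^*(BA\times Y^A)\to H^0(Y^A;\E^*(BA))=\mathrm{Map}(\pi_0(Y^A),\E^*(BA))$. It is split by pulling back along $Y^A\to\pi_0(Y^A)$, naturally in $A$, so the induced map of ends stays surjective. Its kernel is nilpotent because $Y^A$ is finite-dimensional.
\item[(b)] Use the argument of \cite[Lemma 15.4]{QuillenSpec}. It identifies $\int_{A\in\mathcal{A}(G)}\mathrm{Map}(\pi_0(Y^A),\E^*(BA))$ with $\lim_{A\in\mathcal{A}(\Gamma)}\E^*(BA)$, using the above description of $\pi_0(Y^A)$. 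This is where torsion-freeness of $\Gamma'$ and connectedness of $\underline{E}\Gamma^{\tilde A}$ are used.
\end{enumerate}
It is the components of $Y^A$, not any contractibility, that convert the indexing category $\mathcal{A}(G)$ into $\mathcal{A}(\Gamma)$.
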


This is an instance of the following more general result about the complex oriented cohomology of classifying spaces of groups which admit a finite $\underline{E}\Gamma$ and a torsion-free finite-index subgroup: recall from Section \ref{sec:MgBackground} that $\Gamma_g$ (and $\Gamma_1^1 = \SL(2,\bZ)$) has these properties.

\begin{thm}
Let $\Gamma$ be a group admitting a finite $\underline{E}\Gamma$ and a torsion-free finite-index subgroup. 
Let $\E$ be a complex oriented cohomology theory. Then the map
$$\E^*(B\Gamma) \otimes \bQ \lra \lim_{A \in \mathcal{A}(\Gamma)} \E^*(BA)\otimes \bQ$$
is surjective with nilpotent kernel.%
\end{thm}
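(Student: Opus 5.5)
We would use the equivariant cohomology of the finite proper $\Gamma$-CW-complex $X := \underline{E}\Gamma$, so that $B\Gamma \simeq E\Gamma \times_\Gamma X$, and filter by the cells of $X$. There is a Bredon-type (Segal) spectral sequence
$$E_1^{p,*} = \prod_{\text{$p$-cells } \Gamma/H \times D^p} \E^*(BH) \Longrightarrow \E^*(B\Gamma),$$
where each $H$ is a finite cell stabiliser. It is finite in the $p$-direction because $X$ is finite-dimensional. After rationalising, $E_2^{p,*}\otimes\bQ$ is the Bredon cohomology $\H^p_\Gamma(X; \underline{M})$ of the coefficient system $\Gamma/H \mapsto \E^*(BH)\otimes\bQ$. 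By the Hopkins--Kuhn--Ravenel theorem \eqref{eq:hkr}, this system is the right Kan extension of $A \mapsto \E^*(BA)\otimes \bQ$ from the category of finite abelian $p$-subgroups.

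The plan has four steps, carried out in order.

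First, we identify the edge map. The term $E_2^{0,*}\otimes \bQ = \lim_{\Gamma/H \in \mathcal{O}_{\mathcal{F}}(\Gamma)} \E^*(BH)\otimes\bQ$ is a limit over the orbit category restricted to stabilisers. Since $X^H$ is contractible for each finite $H$, every finite subgroup occurs up to conjugacy as a subgroup of a stabiliser. Using \eqref{eq:hkr} for each $H$, this limit rewrites as $\lim_{A\in\mathcal{A}(\Gamma)} \E^*(BA)\otimes\bQ$. The edge homomorphism is then the map in the statement.

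Second, we show nilpotence of the kernel. An element of the kernel has positive filtration, and $F^1\cdot F^1 \subseteq F^2$ by multiplicativity of the filtration. Because the filtration has length at most $\dim X + 1$, any product of $\dim X + 1$ kernel elements vanishes. This gives nilpotence, with no need to rationalise.

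Third, and this is the main obstacle, we prove surjectivity. The first thing to try is to show that the rationalised coefficient system is injective (or has vanishing higher Bredon cohomology), so that $E_2^{p,*}\otimes \bQ = 0$ for $p>0$. The spectral sequence would then collapse rationally and the edge map would be onto. To do this, we decompose using the Mackey structure. The functor $H\mapsto \E^*(BH)\otimes\bQ$ carries transfers, and on finite groups, after inverting orders, it is a retract of a sum of induced-from-abelian pieces (the HKR character / Artin-induction argument). Mackey functors in characteristic zero which are projective relative to such a family have vanishing higher Bredon cohomology. An alternative route, closer to the proof of Lemma \ref{lem:Complete}, is a transfer argument. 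If the torsion-free finite-index subgroup $\Gamma'$ can be taken normal, then the spectral sequence for $\Gamma'$ with $\Gamma/\Gamma'$-action is the Atiyah--Hirzebruch one. A finite averaging argument then shows that the higher differentials out of $E^{0}$ vanish rationally, because the obstructions land in groups annihilated by $|\Gamma/\Gamma'|$ after comparison with $X/\Gamma$. I expect making one of these vanishing statements rigorous to be the delicate point. In particular, without finiteness hypotheses on $\E^*$, rationalisation does not commute with the inverse limits; this is handled by the finiteness of $X$, since only finitely many cells occur.

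Finally, we apply the result to $\Gamma_g$ and $\SL(2,\bZ)$. Both satisfy the hypotheses by Section \ref{sec:MgBackground}, which gives Theorem \ref{thm:ModNil}.
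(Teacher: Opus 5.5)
Your steps 1 and 2 are fine. For the skeletal filtration of the finite $\Gamma$-CW-complex $\underline{E}\Gamma$, the kernel of the edge map is $F^1$, which is nilpotent by multiplicativity. Also, $E_2^{0,*}\otimes\bQ$ does identify with $\lim_{A\in\mathcal{A}(\Gamma)}\E^*(BA)\otimes\bQ$ via HKR applied to each stabiliser.

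\textbf{The gap is surjectivity.} This is the real content of the theorem, and step 3 does not establish it.

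Your first route is false. The coefficient system $\Gamma/H\mapsto\E^*(BH)\otimes\bQ$ contains the constant system $\E^*\otimes\bQ$ as a retract: pull back along $BH\to *$, then restrict to a basepoint. Hence $E_2^{p,*}\otimes\bQ$ contains $H^p(\underline{B}\Gamma;\E^*\otimes\bQ)\cong H^p(B\Gamma;\E^*\otimes\bQ)$ as a summand. This is nonzero for, e.g., $\Gamma=\bZ$, or $\Gamma=\Gamma_g$ with $g\geq 3$ (where $\kappa_1\neq 0$). Relative-projectivity arguments for Mackey functors give vanishing for a finite group, where $\underline{E}G$ is a point, but not for $\Gamma$ acting on $\underline{E}\Gamma$. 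So the spectral sequence does not collapse onto its $0$-column. What must be shown is the different statement that the $0$-column consists of rational permanent cycles.

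Your averaging route does not give this either. Restriction to $B\Gamma'$ followed by transfer is multiplication by $\trf^*(1)$, pulled back from $\E^0(BG)$ with $G=\Gamma/\Gamma'$. This equals $|G|$ only in ordinary cohomology. In general it is, e.g., $\emptyset_n(x_n)$ for $G=C_n$, which is a zero-divisor ($x_n\cdot\emptyset_n(x_n)=0$) and remains non-invertible after rationalising. Dealing with exactly this is what HKR character theory does.

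\textbf{How the paper fills this.} Take $\Gamma'$ normal and write $B\Gamma=EG\times_G Y$, where $Y=\underline{E}\Gamma/\Gamma'$ is a finite $G$-CW-complex. HKR's Theorem A for the finite group $G$ acting on $Y$ then identifies $\E^*(B\Gamma)[1/|G|]$ with the end $\int_{A\in\mathcal{A}(G)}\E^*(BA\times Y^A)[1/|G|]$.

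On the \emph{product} $BA\times Y^A$, the edge map to $\mathrm{Map}(Y^A,\E^*(BA))$ is split surjective: the splitting is pullback along $Y^A\to\pi_0(Y^A)$, which is natural in $A$. Its kernel is nilpotent because $Y^A$ is finite-dimensional. Reducing first to abelian groups with the fixed-point space appearing as a product factor is what removes the obstruction you could not control.

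Finally, Quillen's argument \cite[Lemma 15.4]{QuillenSpec} identifies this end with $\int_{A\in\mathcal{A}(\Gamma)}\mathrm{Map}(\underline{E}\Gamma^A,\E^*(BA))$. That is the desired limit, since each $\underline{E}\Gamma^A$ is contractible.
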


Note that if $\Gamma$ has a finite $\underline{E}\Gamma$ then it has finitely-many conjugacy classes of finite subgroups, so $\mathcal{A}(\Gamma)$ is (equivalent to) a finite category: hence it does not matter in which order we rationalise and take the limit on the right hand side.

\begin{proof}
Let $\Gamma' \leq \Gamma$ be a torsion-free finite-index subgroup, which we may assume to be normal. Then $\underline{E}\Gamma$ is an $E\Gamma'$, and we let $G := \Gamma/\Gamma'$. Then we may take $EG \times \underline{E}\Gamma$ with the diagonal $\Gamma$-action as an $E\Gamma$, so that $B\Gamma = EG \times_G (\underline{E}\Gamma/\Gamma')$. Let us write $Y := \underline{E}\Gamma/\Gamma'$, which is a $B\Gamma'$ and a finite CW-complex. We now apply \cite[Theorem A]{HKR}, which says that the natural maps
$$\E^*(EG \times_G Y) \lra \lim_{A \in \mathcal{A}(G)} \E^*(EG \times_A Y) \lra \int_{A \in \mathcal{A}(G)} \E^*(BA \times Y^A)$$
are both isomorphisms after inverting $\vert G \vert$. (This uses the observation of \cite[Remark 3.5 (1)]{HKR} to restrict to abelian groups of prime power order.)

Consider $\E^*(BA \times Y^A)$ equipped with the Leray--Serre filtration with respect to the map $BA \times Y^A \to Y^A$. The edge homomorphism for the associated Atiyah--Hirzebruch--Serre spectral sequence has the form
$$\E^*(BA \times Y^A) \lra \H^0(Y^A ; \E^*(BA)),$$
where we may think of the target as the set $\mathrm{Map}(Y^A, \E^*(BA))$ of locally constant functions. This edge homomorphism is split surjective, by choosing a splitting of $Y^A \to \pi_0(Y^A)$. Its kernel consists of elements which have Serre filtration $\geq 1$, so their $d$-th powers have Serre filtration $\geq d$ and hence vanish as long as $d > \mathrm{dim}(Y^A)$: in particular this kernel is nilpotent. Using that the category $\mathcal{A}(G)$ is finite, it follows that
$$\int_{A \in \mathcal{A}(G)} \E^*(BA \times Y^A) \lra \int_{A \in \mathcal{A}(G)} \mathrm{Map}(Y^A, \E^*(BA))$$
is (split) surjective with nilpotent kernel. Finally, the argument of \cite[Lemma 15.4]{QuillenSpec}, translated into the language of ends, shows that the natural map
\begin{equation*}
\int_{A \in \mathcal{A}(G)} \mathrm{Map}(Y^A, \E^*(BA)) \lra \int_{A \in \mathcal{A}(\Gamma)} \mathrm{Map}(\underline{E}\Gamma^A, \E^*(BA))
\end{equation*}
is an isomorphism. As each $A \in \mathcal{A}(\Gamma)$ is in particular a finite group, the fixed points $\underline{E}\Gamma^A$ are contractible and in particular connected so the right hand side is $\int_{A \in \mathcal{A}(\Gamma)} \E^*(BA)$, which is precisely the limit in the statement of the theorem.
\end{proof}

\begin{rem}
An extension of the Hopkins--Kuhn--Ravenel character theory to groups with a finite $\underline{E}\Gamma$ has been given by L{\"u}ck--Patchkoria--Schwede \cite{LPS}, and it would be interesting to investigate its consequences for $\Gamma_g$.
\end{rem}

There is a similar result to the above that is worth recording here. For a group $G$, let $\mathcal{F}(G)$  denote the category whose objects are the finite subgroups of $G$, and whose morphisms are those homomorphisms induced by conjugation in $G$.

\begin{thm}\label{thm:FinSubgpDetNilp}
Let $\Gamma$ be a group admitting a finite $\underline{E}\Gamma$ and a torsion-free finite-index subgroup. Let $\E$ be a multiplicative cohomology theory. Then the natural map
$$\E^*(B\Gamma)\lra \prod_{F \in \mathcal{F}(\Gamma)} \E^*(BF)$$
has nilpotent kernel.
\end{thm}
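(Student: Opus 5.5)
We will not use character theory here, since $\E$ is only assumed multiplicative. The plan is to compare $B\Gamma$ with the Borel construction of the finite $\Gamma$-CW-complex $X := \underline{E}\Gamma$ and run a skeletal induction over the equivariant cells. Write $B\Gamma \simeq E\Gamma \times_\Gamma X$. Each $\Gamma$-cell of $X$ has the form $\Gamma/F \times D^k$ with $F$ finite, and $X$ has only finitely many such cells. The skeleta $X^{(k)}$ therefore give a finite filtration
$$E\Gamma\times_\Gamma X^{(0)} \subset E\Gamma\times_\Gamma X^{(1)} \subset \cdots \subset E\Gamma\times_\Gamma X^{(n)} = B\Gamma, \quad n = \dim X.$$
The successive quotients are wedges of $\Sigma^k (BF)_+$, because $E\Gamma \times_\Gamma (\Gamma/F \times D^k, \Gamma/F\times S^{k-1}) = (BF\times D^k, BF\times S^{k-1})$.

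\textbf{Step 1: restriction to the bottom stage.} Consider the composite
$$\E^*(B\Gamma) \to \E^*(E\Gamma\times_\Gamma X^{(0)}) = \prod_{\text{0-cells}} \E^*(BF_\sigma).$$
This map factors through $\prod_{F\in\mathcal{F}(\Gamma)} \E^*(BF)$, since each $BF_\sigma \to B\Gamma$ is induced by the inclusion $F_\sigma \leq \Gamma$ up to conjugation. It therefore suffices to show the following: if $a\in \E^*(B\Gamma)$ restricts to zero on every $BF$, then $a^{N}=0$ for some $N$ independent of $a$, and $N = n+1$ should work. We also need the fact that $a$ restricts to zero on each relative piece. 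Every cell stabiliser is a finite subgroup $F$, so $a$ vanishes on the $BF\times D^k$ (up to homotopy). This is the key point, and it is why the product ranges over all finite subgroups and not just the $0$-cell stabilisers.

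\textbf{Step 2: the standard nilpotence argument.} We show by induction on $k$ that $a^{k+1}$ vanishes on $Z_k := E\Gamma\times_\Gamma X^{(k)}$. For $k=0$ this is the hypothesis. Suppose $a^{k}$ vanishes on $Z_{k-1}$. Then $a^k$ lifts to a relative class $b\in \E^*(Z_k, Z_{k-1})$. Meanwhile $a$ restricted to $Z_k$ vanishes on each $BF\times D^k$ piece. More precisely, let $U$ be the union of the open cells, $Z_k \setminus Z_{k-1}$, which deformation retracts onto $\coprod BF$. Then $a|_{Z_k}$ lifts to a class $c\in \E^*(Z_k, U')$, where $U'$ is a neighbourhood of the open-cell interiors' cores. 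The product $b\cdot c$ lies in $\E^*(Z_k, Z_{k-1}\cup U')=0$, because $Z_{k-1}\cup U'$ is homotopy equivalent to $Z_k$ via a collar, exactly as in the proof that a cohomology theory's reduced cup products vanish on a union of two contractible-relative pieces. Hence $a^{k+1}|_{Z_k}=0$. After $n$ steps we get $a^{n+1}=0$.

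\textbf{Main obstacle.} The delicate step is making the relative cup product argument in Step 2 precise: choosing the open cover $\{Z_{k-1}\text{-neighbourhood}, U'\}$ of $Z_k$ so that $a$ genuinely lifts to $\E^*(Z_k, U')$ using only that $a|_{BF}=0$. The first thing to try is to take $U'$ to be the Borel construction of the interiors of the $k$-cells. This equals $\coprod BF\times \mathrm{int}(D^k)$, on which $a$ vanishes, and the long exact sequence then gives the lift. The open neighbourhood of $Z_{k-1}$ is the Borel construction of $X^{(k)}$ minus the cell centres, which retracts to $Z_{k-1}$. The two sets together cover $Z_k$, so the relative product of $b$ and $c$ lands in the cohomology of $(Z_k, Z_k)$, which is zero.

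Finally, the torsion-free finite-index hypothesis is not needed for this argument beyond ensuring the setup matches the paper's earlier ones; finiteness of $\underline{E}\Gamma$ is what bounds $N$.
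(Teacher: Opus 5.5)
Your proof is correct, and it takes a more hands-on route than the paper's.

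\textbf{The paper's route.} The paper chooses a normal torsion-free finite-index $\Gamma'$ and writes $B\Gamma = EG\times_G Y$, where $G=\Gamma/\Gamma'$ is finite and $Y=\underline{E}\Gamma/\Gamma'$ is a finite $G$-CW-complex. It then quotes Quillen's \cite[Proposition 3.2]{QuillenSpec}: the edge homomorphism $\E^*(B\Gamma)\to H^0(G\backslash Y;\mathcal{E}^*_G)$ of the Leray spectral sequence for $EG\times_G Y\to G\backslash Y$ has nilpotent kernel. An element killed by restriction to every finite subgroup is in particular killed by this edge map.

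\textbf{Your route.} You run the underlying nilpotence argument yourself. You work directly on the $\Gamma$-CW skeletal filtration of $\underline{E}\Gamma$, using $E\Gamma\times_\Gamma \underline{E}\Gamma\simeq B\Gamma$, and take a relative cup product over an open cover of each $Z_k$.

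\textbf{What each buys.} Your version is self-contained and gives an explicit uniform bound: any product of $\dim\underline{E}\Gamma+1$ elements of the kernel vanishes, so the kernel is a nilpotent ideal. It also shows, as you note, that the torsion-free finite-index subgroup is unnecessary, and so is finiteness of $\underline{E}\Gamma$; finite-dimensionality suffices. The paper's detour through the finite group $G$ serves to reach the setting of a finite group acting on a finite complex, where Quillen's proposition can be quoted directly. In exchange, the paper obtains the slightly stronger statement about the edge map to sections of the stabiliser sheaf, in the same framework as the preceding theorem.

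\textbf{One correction to the write-up.} In Step~2 you claim that $\E^*(Z_k, Z_{k-1}\cup U')=0$ ``via a collar''. This is not right if $U'$ is only a neighbourhood of the cell centres. In that case the pair is, up to excision and homotopy, the Borel construction of an annulus relative to its two boundary spheres, whose $\E$-cohomology is generally nonzero. The correct version is the one in your final paragraph:
\begin{itemize}
\item transport $b$ to $\E^*(Z_k,V)\cong \E^*(Z_k,Z_{k-1})$, where $V$ is the Borel construction of $X^{(k)}$ minus the cell centres;
\item take $c\in \E^*(Z_k,U')$, where $U'$ is the Borel construction of the full open $k$-cells;
\item use that $V$ and $U'$ are open with $V\cup U'=Z_k$, so $b\cdot c$ lands in $\E^*(Z_k,Z_k)=0$.
\end{itemize}
With that substitution the argument is complete.
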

\begin{proof}
As in the previous proof, we have $B\Gamma = EG \times_G (\underline{E}\Gamma/\Gamma')$ with $Y := \underline{E}\Gamma/\Gamma'$ a finite $G$-CW-complex. The Leray spectral sequence for the map $EG \times_G Y \to G\backslash Y$ has edge homomorphism
$$\E^*(B\Gamma) \lra H^0(G\backslash Y ; \mathcal{E}^*_G),$$
where $\mathcal{E}^*_G$ is the sheaf on $G\backslash Y$ whose stalk at $[y]$ is the $\E$-cohomology of the stabiliser of $y \in Y$. This edge homomorphism has nilpotent kernel by the argument of \cite[Proposition 3.2]{QuillenSpec} (cf.\ Remark 3.4 of that paper). But if an element is in the kernel of the map in the statement of the theorem, then it is also in the kernel of this edge homomorphism.
\end{proof}

\section{Finite group actions}\label{sec:FinGpAct}

If $\rho : G \to \Diff^+(\Sigma)$ describes an action of a group $G$ on a surface $\Sigma$ by orientation-preserving diffeomorphisms, the Borel construction gives an associated oriented surface bundle
$$EG \times_G \Sigma \lra BG,$$
and so associated classes $\kappa_i^{MU}(\rho) \in \MU^{2i}(BG)$.

Following Kawazumi--Uemura \cite{KawazumiUemura}, when $G$ is finite we will give a formula for $\kappa_i^{MU}(\rho)$ in terms of fixed-point data, as long as $i \geq 0$. (We don't know whether $\kappa_{-1}^{MU}(\rho)$ can have such a description in general, but for cyclic groups we give one in Corollary \ref{cor:KappaMinusOneCyclic}.) We will deduce it from a more general result whose flavour is that of the localisation theorem in equivariant cohomology. In the following, for a subgroup $H \leq G$ we write $\mathrm{Ind}_{H}^G : \MU^*(BH) \to \MU^*(BG)$ for the effect of the Becker--Gottlieb transfer map associated to the finite cover $BH \to BG$.

\begin{thm}\label{thm:LocalisationTheorem}
Let $G$ be finite and $\rho$ be a faithful action. Let $S \subset \Sigma$ be the (finite) set of points having nontrivial isotropy, and write $G_p \leq G$ for the isotropy group at a point $p \in \Sigma$ (which acts faithfully on $T_p\Sigma$ so is cyclic). 
\begin{enumerate}[(i)]
\item If $y \in \MU^i_G(\Sigma) := \MU^i(EG \times_G \Sigma)$ has degree $i > 2$ then
$$\pi_!^{MU}(y) = \sum_{[p] \in S/G} \mathrm{Ind}_{G_p}^G\left(\frac{y\vert_p}{c_1^{MU}(T_p \Sigma)}\right) \in \MU^{i-2}(BG).$$
Part of the statement is that the restriction $y\vert_p \in \MU^i_{G_p}(\{p\})$ is divisible by $c_1^{MU}(T_p \Sigma)$, and the fraction denotes any choice of quotient.

\item If $y \in \MU^2_G(\Sigma)$ then
$$\pi_!^{MU}(y) = C \cdot \mathrm{Ind}_{\{e\}}^G(1) + \sum_{[p] \in S/G} \mathrm{Ind}_{G_p}^G\left(\frac{y\vert_p}{c_1^{MU}(T_p \Sigma)}\right) \in \MU^{0}(BG)$$
where $C \in \mathbb{Z}$ is determined by requiring that the two sides be equal after restricting along $\{*\} \to BG$. (This will depend on the choices of quotients $\frac{y\vert_p}{c_1^{MU}(T_p \Sigma)}$.)
\end{enumerate}
\end{thm}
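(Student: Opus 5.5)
We decompose $\Sigma$ $G$-equivariantly as $\Sigma = D \cup \Sigma^\circ$. Here $D = \bigsqcup_{p \in S} D_p$ is a $G$-invariant union of small closed discs around the points of $S$, with $G_p$ acting linearly on $D_p \cong D(T_p\Sigma)$. The complement $\Sigma^\circ = \Sigma \setminus \mathrm{int}(D)$ is a compact surface with boundary on which $G$ acts freely. Fibre integration is additive over such a decomposition, via the Pontrjagin--Thom collapse onto the two pieces. More precisely, the class $y$ is of the form $y\vert_D$ plus something, and we handle this by a Mayer--Vietoris / relative argument. The contribution of each $G$-orbit of discs is $\mathrm{Ind}_{G_p}^G$ of the pushforward along $EG_p \times_{G_p} D_p \to BG_p$. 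This pushforward is computed through the Thom isomorphism for $T_p\Sigma$: a class on $(D_p,\partial D_p)$ restricting to $z \cdot u_{T_p\Sigma}$ pushes forward to $z$. The contribution of the free part is a class pulled back from $\MU^{i-2}$ of $\Sigma^\circ/G$ (or of the pair $(\Sigma^\circ/G, \partial)$). Since $\Sigma^\circ/G$ is a compact surface, this class vanishes for degree reasons when $i-2 > 0$. Strictly, it is a pushforward along $E G\times_G \Sigma^\circ \simeq \Sigma^\circ/G \to BG$ of a class in $\MU^i$ of a 2-dimensional complex relative to its boundary.

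The concrete steps are as follows. First, we show that $y\vert_p$ is divisible by $c_1^{MU}(T_p\Sigma)$ when $i>2$ (and arrange the comparison when $i=2$). The restriction of $y$ to $EG_p \times_{G_p} \partial D_p$ has degree $i$. Since $G_p$ acts freely on $\partial D_p \simeq S^1$, this space is homotopy equivalent to the circle $\partial D_p/G_p$, so the restriction vanishes for $i>1$. The Gysin sequence of the sphere bundle of $T_p\Sigma$ over $BG_p$ then shows that $y\vert_p$ lies in the image of multiplication by the Euler class $c_1^{MU}(T_p\Sigma)$. We choose a quotient $z_p$, so that $z_p \cdot u_{T_p\Sigma}$ is a relative class on $(D_p,\partial D_p)$ whose image in $\MU^i_{G_p}(D_p)$ is $y\vert_{D_p}$. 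Second, we extend these relative classes by zero to obtain $y' \in \MU^i_G(\Sigma)$ supported near $S$, with $\pi_!(y') = \sum_{[p]} \mathrm{Ind}_{G_p}^G(z_p)$. This uses compatibility of $\pi_!$ with transfers for the induced pieces $G\times_{G_p} D_p$ and the Thom-class computation. Third, $y - y'$ vanishes on $EG\times_G D$, so it comes from $\MU^i_G(\Sigma, D) \cong \MU^i(\Sigma^\circ/G, \partial)$. Its pushforward therefore factors through $\MU^{i-2}$ of a finite complex pushed to $BG$ via the map $\Sigma^\circ/G \to BG$. We then identify this with an element of $\MU^{i-2}(\mathrm{pt})$ times $\mathrm{Ind}_{\{e\}}^G(1)$ for degree reasons. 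Concretely, $\MU^i$ of a 2-dimensional relative complex is $\MU^{i-2}\cdot(\text{top class})$, and the fibre integral of the top class is $\mathrm{Ind}_{\{e\}}^G(1)$. For $i>2$, $\MU^{i-2}=0$, giving (i). For $i=2$ we get an integer multiple $C\cdot\mathrm{Ind}_{\{e\}}^G(1)$, and $C$ is pinned down by restricting to a point, giving (ii).

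The main obstacle is the third step. We must justify that the pushforward of a class supported on the free part is exactly a multiple of $\mathrm{Ind}_{\{e\}}^G(1)$. This requires a careful description of $\MU^*_G(\Sigma,D)$ via the top cell of $\Sigma^\circ/G$ relative to its boundary, together with compatibility of the fibrewise Pontrjagin--Thom collapse with this cell. I would try to realise the top class by a small free $G$-orbit of discs $G\times D^2 \subset \Sigma^\circ$. Its equivariant pushforward is visibly $\mathrm{Ind}_{\{e\}}^G(1)$, using that the Thom class of a trivial disc integrates to $1$.
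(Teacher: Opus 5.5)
Your proof is correct, and its first two steps agree with the paper. Your class $z_p\cdot u_{T_p\Sigma}$ extended by zero is the paper's $i^{MU}_!(z_p)$. Your Gysin argument on $\partial D_p/G_p\simeq S^1$ is the bottom row of the paper's diagram; the paper itself gets divisibility by lifting $y$ along $i^{MU}_!$, using $\MU^i_G(\Sigma-S)=0$ for $i\geq 2$.

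The real difference is where you put the residual ambiguity.

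\textbf{The paper's route.} It shows that $y-\sum_{[p]}i^{MU}_!(z_p)$ lies in the image of the Mayer--Vietoris boundary from $\MU^{i-1}$ of the circles $S(T_p\Sigma)/G_p$, which vanishes for $i>2$. For $i=2$ it rewrites this difference as $\sum_{[p]} i^{MU}_!(\epsilon_p)$ with $c_1^{MU}(T_p\Sigma)\cdot\epsilon_p=0$. It then needs Lemma \ref{lem:AnnOfx}, and through it the completeness result Lemma \ref{lem:Complete}, to conclude that $\epsilon_p\in\bZ\cdot\mathrm{Ind}_{\{e\}}^{G_p}(1)$. The free case $S=\emptyset$ is treated separately by Poincar\'e duality on $\Sigma/G$.

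\textbf{Your route.} You push the ambiguity into the free part. The class $y-y'$ comes from $\MU^i_G(\Sigma,D)\cong\MU^i(\Sigma^\circ/G,\partial)$. The Atiyah--Hirzebruch spectral sequence of this $2$-dimensional relative complex has the single term $H^2(-;\MU^{i-2})$ in even degrees and nothing in odd degrees $\geq 3$. So this group is $0$ for $i>2$ and $\bZ$ for $i=2$.

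The step you flag as the main obstacle is settled by naturality of that spectral sequence along $(D^2,\partial D^2)\to(\Sigma^\circ/G,\partial)$, via excision. Both sides have only the term $H^2(-;\MU^0)\cong\bZ$, and the map between them is an isomorphism. Hence the generator is the Thom class of a small disc, i.e.\ $j_!(1)$ for a free orbit $j$, and $\pi_!j_!(1)=\mathrm{Ind}_{\{e\}}^G(1)$.

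\textbf{Comparison.} Your route needs no knowledge of $\MU^*(BC_n)$ and avoids Lemmas \ref{lem:AnnOfx} and \ref{lem:Complete}. It also handles $S=\emptyset$ uniformly: the paper's free case is exactly your third step with $D=\emptyset$. In exchange, the paper's route gives the sharper fact that $y$ itself lies in the image of $i^{MU}_!$ from the isotropy points. This makes it visible that the term $C\cdot\mathrm{Ind}_{\{e\}}^G(1)$ is exactly the effect of changing the choices of quotients $\frac{y\vert_p}{c_1^{MU}(T_p\Sigma)}$.
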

\begin{proof}
Let us first deal with the case $S=\emptyset$, i.e.\ $G$ acts freely on $\Sigma$. Then $\MU^i_G(\Sigma) = \MU^i(\Sigma/G)$. If $i>2$ then this group vanishes, so $y=0$ and hence (i) holds trivially. If $i=2$ then Poincar{\'e} duality $[\Sigma/G]_{MU} \frown - : \MU^2(\Sigma/G) \overset{\sim}\to \MU_0(\Sigma/G)=\bZ$ identifies $y$ with the integer $d := [\Sigma/G]_{MU} \frown y$. If $\iota : \{*\} \to \Sigma/G$ is a point, then $\iota^{MU}_!(1) \in \MU^2(\Sigma/G)$ is a generator and $y = d \cdot \iota^{MU}_!(1)$. Then $q : \{*\} \overset{\iota}\to \Sigma/G \overset{\pi}\to BG$ are complex-oriented maps, and $\pi^{MU}_!\iota^{MU}_!(1) = q^{MU}_!(1) = \mathrm{Ind}_{\{e\}}^G(1)$, so $\pi^{MU}_!(y) = d \cdot \mathrm{Ind}_{\{e\}}^G(1)$ as required.

Suppose now that $S$ is nonempty. We may choose a $G$-invariant Riemannian metric on $\Sigma$, and hence cover it $G$-equivariantly by $\Sigma - S$ and a small open disc bundle of $T\Sigma\vert_S$. Using that this disc bundle is ($G$-equivariantly) homotopy equivalent to $S$, and that the intersection is ($G$-equivariantly) homotopy equivalent to the unit sphere bundle $S(T\Sigma\vert_S)$, we develop the following commutative  diagram.
\begin{equation*}
\begin{tikzcd}%
\MU^{i-1}_G(S(T\Sigma\vert_S)) \arrow[rd, "\partial"] \arrow[d, "\delta"] \\
\bigoplus_{[p] \in S/G} \MU^{i-2}_{G_p}(\{p\}) \rar{i^{MU}_!} \arrow[rdd, swap, "\bigoplus c_1^{MU}(T_p\Sigma) \cdot -"]& \MU^i_G(\Sigma) \rar \dar & \MU^i_G(\Sigma - S) \dar\\
  & \MU^i_G(S) \rar \arrow[d, equals] & \MU^i_G(S(T\Sigma\vert_S)) \arrow[d, equals]\\
 & \bigoplus_{[p] \in S/G} \MU^i_{G_p}(\{p\}) & \bigoplus_{[p] \in S/G} \MU^i_{G_p}(S(T_p\Sigma))
\end{tikzcd}
\end{equation*}
The row is part of the long exact sequence for the pair $(\Sigma, \Sigma - S)$ together with excision and the Thom isomorphism. The square and the map $\partial$ can be viewed as part of the Mayer--Vietoris sequence for the open cover of $\Sigma$ by $\Sigma - S$ and the small open disc bundle of $T\Sigma\vert_S$.

As $G$ acts freely on $\Sigma-S$ we have $\MU^i_G(\Sigma-S) = \MU^i((\Sigma-S)/G)$ and $(\Sigma-S)/G$ has dimension 1 as $S \neq \emptyset$. In particular this vanishes if $i \geq 2$. Similarly, as $G_p$ acts freely on $S(T_p \Sigma)$ we have $\MU^i_{G_p}(S(T_p\Sigma)) =  \MU^i(S(T_p\Sigma)/G_p)$ and $S(T_p\Sigma)/G_p = S^1$ so this vanishes if $i \geq 2$.

Suppose then that we are given $y \in \MU^i_G(\Sigma)$ with $i \geq 2$. It restricts to 0 in $\MU^i_G(\Sigma - S)$ so lifts along $i_!^{MU}$, and hence its restriction $\bigoplus_{[p]} y\vert_p$ to $\MU^i_G(S) = \bigoplus_{[p] \in S/G} \MU^i_{G_p}(\{p\})$ admits a lift $\bigoplus_{[p]} y\vert_p / c_1^{MU}(T_p\Sigma)$ to $\bigoplus_{[p] \in S/G} \MU^{i-2}_{G_p}(\{p\})$. Thus the class
\begin{equation}\label{eq:ModifiedY}
y - \sum_{[p]} i^{MU}_! \left( \frac{y\vert_p }{ c_1^{MU}(T_p\Sigma)}\right) \in \MU^i_G(\Sigma)
\end{equation}
restricts to 0 in both $\MU^i_G(\Sigma - S)$ and $\MU^i_G(S)$, so by exactness of the Mayer--Vietoris sequence it is in the image of the map $\partial$. If $i > 2$ then $\MU^{i-1}_G(S(T\Sigma\vert_S))=0$ so \eqref{eq:ModifiedY} is zero and applying $\pi^{MU}_!$ to it gives (i).

If $i=2$ then $\MU^{1}_G(S(T\Sigma\vert_S)) = \bigoplus_{[p]} \MU^1(S(T_p\Sigma)/G_p)$ is non-zero. The diagram shows that there are classes $\epsilon_p \in \MU^0_{G_p}(\{p\})$ so that \eqref{eq:ModifiedY} is $\sum_{[p]} i^{MU}_!(\epsilon_p)$ and so
 \begin{equation}\label{eq:ModifiedYFormula}
y = \sum_{[p]} i^{MU}_!\left( \frac{y\vert_p }{ c_1^{MU}(T_p\Sigma)} + \epsilon_p\right) \in \MU^2_G(\Sigma).
\end{equation}
As the $i^{MU}_!(\epsilon_p)$ come from $\partial$ they vanish in $\MU^i_G(S)$, and so $c_1^{MU}(T_p\Sigma) \cdot \epsilon_p  = 0$. Thus the element $\frac{y\vert_p }{ c_1^{MU}(T_p\Sigma)} + \epsilon_p$ is just another choice of a quotient of $y\vert_p$ by $c_1^{MU}(T_p\Sigma)$ in the ring 
$$\MU^*_{G_p}(\{p\}) = \MU^*[[c_1^{MU}(T_p\Sigma)]]/([|G_p|](c_1^{MU}(T_p\Sigma))).$$

By Lemma \ref{lem:AnnOfx} the non-uniqueness involved in dividing by $c_1^{MU}(T_p\Sigma)$ in the ring $\MU^*_{G_p}(\{p\})$ consists only of $\MU^*$-multiples of $\emptyset_{|G_p|}(c_1^{MU}(T_p\Sigma)) = \mathrm{Ind}_{\{e\}}^{G_p}(1)$.
Thus $\epsilon_p  = E_p \cdot \mathrm{Ind}_{\{e\}}^{G_p}(1)$ for some $E_p \in \bZ = \MU^0$. Applying $\pi_!^{MU}$ to \eqref{eq:ModifiedYFormula} gives
\begin{align*}
\pi_!^{MU}(y) &= \sum_{[p]} \pi^{MU}_! i^{MU}_! \left( \frac{y\vert_p }{ c_1^{MU}(T_p\Sigma)} + E_p \cdot \mathrm{Ind}_{\{e\}}^{G_p}(1)\right) \\
&= \sum_{[p]} \mathrm{Ind}_{G_p}^G \left(\frac{y\vert_p }{ c_1^{MU}(T_p\Sigma)}\right) + E_p \cdot \mathrm{Ind}_{\{e\}}^{G}(1)\\
&= \left(\sum_{[p]} E_p\right) \cdot \mathrm{Ind}_{\{e\}}^{G}(1) + \sum_{[p]} \mathrm{Ind}_{G_p}^G \left(\frac{y\vert_p }{ c_1^{MU}(T_p\Sigma)}\right) \in \MU^0(BG).
\end{align*}
This gives a formula of the required kind, with $C := \sum_{[p]} E_p$. This integer can be determined by restricting to a point.
\end{proof}

We now apply this general result about fibre-integration to the generalised Mumford--Morita--Miller classes. On applying the map $\MU^*(-) \to \H^*(-;\bZ)$ it recovers the formula of Kawazumi--Uemura \cite{KawazumiUemura}.

\begin{cor}\label{cor:FinGpAct}
With the same notation as above, we have
$$\kappa_i^{MU}(\rho) = \sum_{[p] \in S/G} \mathrm{Ind}_{G_p}^G(c_1^{MU}(T_p \Sigma)^i) \in \MU^{2i}(BG) \quad \text{ for }i \geq 1$$
and
$$\kappa_0^{MU}(\rho) = \frac{\chi(\Sigma) - \vert S \vert}{\vert G \vert} \cdot \mathrm{Ind}_{\{e\}}^G(1) + \sum_{[p] \in S/G} \mathrm{Ind}_{G_p}^G(1) \in \MU^{0}(BG).$$
The same identities then hold in any complex-oriented cohomology theory.
\end{cor}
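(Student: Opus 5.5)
We apply Theorem \ref{thm:LocalisationTheorem} to the class $y := c_1^{MU}(T_\pi)^{i+1} \in \MU^{2i+2}_G(\Sigma)$, where $T_\pi$ is the vertical tangent bundle of $EG \times_G \Sigma \to BG$. By definition $\kappa_i^{MU}(\rho) = \pi_!^{MU}(y)$.

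For each $p \in S$ the restriction of $T_\pi$ to $EG \times_{G_p} \{p\} = BG_p$ is the $G_p$-representation $T_p\Sigma$. Hence $y\vert_p = c_1^{MU}(T_p\Sigma)^{i+1}$, and the obvious quotient is $c_1^{MU}(T_p\Sigma)^i$.

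For $i \geq 1$ the degree is $2i+2 > 2$, so part (i) applies. It gives the first formula directly, and the statement there that any choice of quotient works means no ambiguity arises.

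For $i=0$ we have $y = c_1^{MU}(T_\pi) \in \MU^2_G(\Sigma)$, and with the quotient $1 \in \MU^0_{G_p}$ part (ii) gives
$$\kappa_0^{MU}(\rho) = C\cdot \mathrm{Ind}_{\{e\}}^G(1) + \sum_{[p]\in S/G} \mathrm{Ind}_{G_p}^G(1).$$
The constant $C$ is found by restricting along $\{*\} \to BG$.

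\begin{itemize}
\item The left side restricts to $\kappa_0$ of $\Sigma \to \ast$. By Lemma \ref{lem:FiltEstimate} (or directly by applying $\MU \to \H$) this is $\chi(\Sigma)$.
\item The transfer $\mathrm{Ind}_H^G$ of a class $a$ restricted to the point equals $\mathrm{res}^G_{\{e\}}\mathrm{Ind}_H^G(a)$. By the double coset formula for the Becker--Gottlieb transfer of the finite cover $BH \to BG$, whose fibre is $G/H$, this is $[G:H]$ times the augmentation of $a$. So $\mathrm{Ind}_{\{e\}}^G(1)$ restricts to $|G|$, and $\mathrm{Ind}_{G_p}^G(1)$ restricts to $|G|/|G_p| = |G\cdot p|$, the size of the orbit of $p$.
\item Summing over orbits gives $\sum_{[p]} |G\cdot p| = |S|$.
\end{itemize}

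Therefore $\chi(\Sigma) = C|G| + |S|$, that is, $C = (\chi(\Sigma)-|S|)/|G|$, which is the second formula. The only point needing care is the restriction computation for transfers. This is the standard fact that restriction to the fibre of a finite covering takes $\trf^*(1)$ to the number of sheets, which can also be checked in $\MU^0 = \bZ$ via the cycle map to integral cohomology.

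Finally, the same identities hold in any complex-oriented theory $\E$. The cycle map $\MU \to \E$ is multiplicative and sends $c_1^{MU}$ to $c_1^E$. It also commutes with fibre integration, since it preserves Thom classes, and with Becker--Gottlieb transfers, since those are maps of spectra. So the identities are natural under it.
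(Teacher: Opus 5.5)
Your proposal is correct and follows the paper's proof essentially verbatim. You apply part (i) of Theorem~\ref{thm:LocalisationTheorem} to $y = c_1^{MU}(T_\pi)^{i+1}$ with quotient $c_1^{MU}(T_p\Sigma)^i$, and part (ii) for $i=0$ with quotient $1$, then fix $C$ by restricting to a point to get $\chi(\Sigma) = C|G| + \sum_{[p]}|G/G_p|$. Your extra justifications, namely how transfers restrict to a point and naturality under $\MU \to \E$, simply spell out steps the paper leaves implicit.
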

\begin{proof}
For $\kappa_i^{MU}(\rho)$ with $i \geq 1$ we apply Theorem \ref{thm:LocalisationTheorem} (i) with $y = c_1^{MU}(T\Sigma)^{i+1}$. We may take $c_1^{MU}(T\Sigma)^{i}$ as a choice of the quotient $\frac{c_1^{MU}(T\Sigma)^{i+1}}{c_1^{MU}(T\Sigma)}$, giving the claimed formula.

For $\kappa_0^{MU}(\rho)$  we apply Theorem \ref{thm:LocalisationTheorem} (ii) with $y = c_1^{MU}(T\Sigma)$. We take $1$ as a choice of the quotient $\frac{c_1^{MU}(T\Sigma)}{c_1^{MU}(T\Sigma)}$, giving
$$\kappa_0^{MU}(\rho) = C \cdot \mathrm{Ind}_{\{e\}}^G(1) + \sum_{[p] \in S/G} \mathrm{Ind}_{G_p}^G(1) \in \MU^{0}(BG)$$
for a $C \in \mathbb{Z}$ to be determined by restriction to a point. Such a restriction gives
$$\chi(\Sigma) = C \cdot |G| + \sum_{[p] \in S/G} |G/G_p|$$
and hence $C = (\chi(\Sigma) - |S|)/|G|$.
\end{proof}

By a theorem of Kerckhoff \cite{Kerckhoff}, for $g \geq 2$ a homomorphism $G \to \pi_0 \Diff^+(\Sigma)$ from a finite group lifts (in fact uniquely up to conjugacy) to a homomorphism to $\Diff^+(\Sigma)$, and in particular has fixed-point data associated to it, using which we may apply the above to describe its Mumford--Morita--Miller classes.
\subsection{Cyclic group actions}

Let us apply the formulas of Corollary \ref{cor:FinGpAct} to the case of cyclic group actions on surfaces.

\begin{cor}\label{cor:DistinguishCylic}
For two faithful actions $\rho$ and $\rho'$ of $C_n$ on a surface $\Sigma$ the following are equivalent
\begin{enumerate}[(i)]

\item $\kappa_{-1}^{MU}(\rho) = \kappa_{-1}^{MU}(\rho') \in \MU^{-2}(BC_n)$,

\item $\kappa_i^{MU}(\rho) = \kappa_i^{MU}(\rho') \in \MU^{2i}(BC_n)$ for all $i \geq -1$,

\item $\kappa_i^{K}(\rho) = \kappa_i^{K}(\rho') \in \K^{0}(BC_n)$ for all $i \geq 1$.
\end{enumerate}
If $n=p^k$ is a prime power, then the above are also equivalent to
\begin{enumerate}[(i)]
\setcounter{enumi}{3}
\item $\rho$ and $\rho'$ are conjugate.

\end{enumerate}
\end{cor}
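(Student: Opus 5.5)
The plan is to make the formulas of Corollary \ref{cor:FinGpAct} explicit for $G=C_n$, and then go round the cycle (i)$\Rightarrow$(ii)$\Rightarrow$(iii)$\Rightarrow$(i). For a point $p\in S$ with isotropy $C_m\leq C_n$, the representation $T_p\Sigma$ is a faithful character of $C_m$. Hence $c_1^{MU}(T_p\Sigma)=[a_p](x_m)$ for a unit $a_p \in (\bZ/m)^\times$, and this equals $p^*([a_p](x_n))$ for the covering map $p : BC_m \to BC_n$. Lemma \ref{lem:QuillenTrfLem} then gives
$$\kappa_i^{MU}(\rho)=\sum_{[p]\in S/C_n}[a_p](x_n)^i\cdot \emptyset_{n/m_p}([m_p](x_n)) \quad (i\geq 1),$$
and a similar formula for $\kappa_0^{MU}$ with $i=0$ plus the $\mathrm{Ind}_{\{e\}}^{C_n}(1)=\emptyset_n(x_n)$ term. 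The same formulas hold in $K$-theory with $[a](x)=1-(1-x)^a$. These formulas depend only on the \emph{fixed-point data}: the multiset of pairs $(m_p,a_p)$ over orbits, together with $\chi(\Sigma)$, which is the same for $\rho$ and $\rho'$.

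For (i)$\Rightarrow$(ii) I would use Lemma \ref{lem:LNOps}(i). Its proof only uses naturality of the Landweber--Novikov operations and the universal description via $\mathrm{MTSO}(2)$, so it applies verbatim to the bundle $EC_n\times_{C_n}\Sigma\to BC_n$. It gives $\kappa_i^{MU}=(-1)^{i+1}s_{i+1}(\kappa_{-1}^{MU})$, so $\kappa_{-1}$ determines all the $\kappa_i$. The implication (ii)$\Rightarrow$(iii) is immediate from naturality under $\MU\to\K$.

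For (iii)$\Rightarrow$(i) I would proceed in two steps.

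\emph{Step 1: the $K$-theory classes determine the fixed-point data.} By the Atiyah--Segal theorem, $\K^0(BC_n)$ is the $I$-adic completion of $R(C_n)$. The element $\kappa_i^K$ is the completion of an honest virtual representation $\sum_{[p]}\mathrm{Ind}_{C_{m_p}}^{C_n}\bigl((1-t^{-a_p})^i\bigr)$, where $t$ is the tautological character. The character of $\mathrm{Ind}_{C_m}^{C_n}$ is supported on $C_m$, with value $(n/m)(1-\zeta^{-a})^i$ at $g$ with $t(g)=\zeta$. I would argue that $R(C_n)\to R(C_n)^\wedge_I$ is injective on the relevant span; restricting to $p$-subgroups via \eqref{eq:hkr} if necessary. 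Then, evaluating at generators of each subgroup $C_d$ and letting $i$ vary, the numbers $(1-\zeta^{-a})$ for different units $a$ are distinct, so a Vandermonde argument recovers, for each $d$ and each $\zeta$, the weighted count of orbits with $C_d\leq C_{m_p}$ and a given rotation. M\"obius inversion over subgroups then recovers the multiset $\{(m_p,a_p)\}$. Given the fixed-point data, the displayed formulas show that all $\kappa_i^{MU}$ with $i\geq 0$ agree.

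\emph{Step 2: the fixed-point data also determine $\kappa_{-1}^{MU}$.} By the projection formula, $x_n\cdot\kappa_{-1}^{MU}=\pi_!^{MU}(\pi^*x_n)$, and Theorem \ref{thm:LocalisationTheorem}(ii) applied to $y=\pi^*x_n$ expresses this in terms of fixed-point data. Here the quotient $y|_p/c_1(T_p\Sigma)=[a_p^{-1}]$-type expression can be chosen canonically from $(m_p,a_p)$, and the integer $C$ is pinned down by restriction to a point. Hence $\delta:=\kappa_{-1}^{MU}(\rho)-\kappa_{-1}^{MU}(\rho')$ satisfies $x_n\delta=0$. Lemma \ref{lem:AnnOfx} then gives $\delta=A\cdot\emptyset_n(x_n)$ with $A\in\MU^{-2}$. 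Restricting to a point, where both classes equal $(1-g)[\P^1]$ by Lemma \ref{lem:FiltEstimate}, gives $nA=0$, so $A=0$ since $\MU^*$ is torsion-free. This yields (i).

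For (iv) with $n=p^k$: conjugate actions clearly have equal invariants. Conversely, equal invariants give equal fixed-point data by Step 1, and Riemann--Hurwitz then gives equal genus of $\Sigma/C_n$. A classical Nielsen-type classification says that $C_{p^k}$-actions are determined up to conjugacy by the quotient genus and the rotation data at points with each isotropy group. This is where the prime-power hypothesis is used: the subgroup lattice is a chain, so the branching data determine the topological type of the orbifold cover.

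The main obstacle I expect is Step 1. One must make sure that no information is lost in passing to the completion $\K^0(BC_n)$, which kills the part of $R(C_n)$ not seen near the augmentation ideal. Concretely, the character values at the various roots of unity must be genuinely recoverable from the completed classes. I would first try to handle this one prime at a time using the $p$-adic description of $\K^0(BC_{p^k})$.
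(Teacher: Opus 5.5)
\textbf{Gap in Step 1.} Your Step 1 is false when $n$ is not a prime power, and your proof of (iii)$\Rightarrow$(i) for such $n$ depends on it. By Atiyah, $\K^0(BC_n)=R(C_n)^\wedge_I$ only retains character values at elements of prime-power order. For example, $t^2-t+1$ becomes a unit in the $(t-1)$-adic completion of $\bZ[t]/(t^6-1)$. So the character values at generators of $C_d$ with $d$ not a prime power, which your Vandermonde/M{\"o}bius step needs, are not available. Restricting to $p$-subgroups does not recover them.

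The claim itself fails. Let $C_6=\langle\gamma\rangle$ act on $\Sigma_{10}$ in two ways:
\begin{enumerate}[(a)]
\item over a sphere with six cone points of monodromy $\gamma$;
\item over a torus with three cone points of monodromy $\gamma^2$ and two of monodromy $\gamma^3$.
\end{enumerate}
Restricted to $C_2$ and to $C_3$, both actions have six fixed points with identical rotations. So the restricted actions are conjugate, and all classes agree on the Sylow subgroups and hence on $BC_6$. Yet only the first action has points with isotropy $C_6$, so no argument can recover the fixed-point data of a $C_n$-action from the $\kappa^K_i$. Relatedly, your closing remark misplaces the prime-power hypothesis. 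Nielsen's classification by fixed-point data holds for all cyclic groups; the hypothesis is needed exactly because of this loss of information in the completion.

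\textbf{The repair.} Reduce the equivalence of (i)--(iii) to $n=p^k$ before anything else, as the paper does.
\begin{enumerate}[(1)]
\item Restricting $\rho$ to a Sylow subgroup $C_{p^e}$ gives a faithful action whose classes are the restrictions of those of $\rho$.
\item The map $\MU^*(BC_n)\to\prod_p\MU^*(BC_{p^{e_p}})$ is injective, because $\bigvee_p BC_{p^{e_p}}\to BC_n$ is a stable equivalence (alternatively, use \eqref{eq:hkr} together with Theorem \ref{thm:MUCnFlat}).
\item So (iii) for $\rho,\rho'$ gives (iii) for each pair of Sylow restrictions. Your Steps 1 and 2 then give (i) there, and injectivity gives (i) on $BC_n$.
\end{enumerate}

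With that reduction in place your argument is correct. In the prime-power case, your Step 1 is the paper's $\Phi_m$/power-sum argument in character language: Atiyah's injectivity of $R(C_{p^k})\to R(C_{p^k})^\wedge_I$, then Vandermonde, then downward induction along the chain of subgroups. Your Step 2 is a genuinely different route. Using Theorem \ref{thm:LocalisationTheorem}(ii) with $y=\pi^*x_n$ and Lemma \ref{lem:AnnOfx}, you show directly that fixed-point data determine $\kappa^{MU}_{-1}$. Hence the equivalence of (i)--(iii) does not need Nielsen's classification, whereas the paper proves (iii)$\Rightarrow$(i) by passing through conjugacy (iv).
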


In the proof we will use that cyclic group actions on surfaces can be described combinatorially, and classified, according to the following scheme of Nielsen \cite{Nielsen}: we follow the exposition in Symonds \cite{Symonds}. An action of the cyclic group $G:= C_n = \mu_n \subseteq S^1$ of $n$-th roots of unity on $\Sigma$, where the generator acts via a diffeomorphism $\phi$, is determined up to conjugation by the unordered collection of elements $\beta_1/\alpha_1, \ldots, \beta_q/\alpha_q \in \bQ/\bZ$ where
\begin{enumerate}[(a)]
\item the orbits $S/G$ (of the set of points $S \subset \Sigma$ having nontrivial stabiliser) has representatives $x_1, \ldots, x_q \in \Sigma$,

\item $\alpha_i$ is the order of the stabiliser of $x_i$, so $G_{x_i} = \mu_{\alpha_i}$,

\item $\phi^{n/\alpha_i}$ acts on $T_{x_i} \Sigma$ by multiplication by an $\alpha_i$th root of unity $\zeta \in \bC^\times$, and $\beta_i$ is such that $\zeta^{\beta_i} = e^{2\pi i/\alpha_i}$.
\end{enumerate}

\begin{proof}[Proof of Corollary \ref{cor:DistinguishCylic}]
If $n=p_1^{e_1} \cdots p_r^{e_r}$ is the factorisation into distinct prime powers, so that $C_n = C_{p_1^{e_1}} \times \cdots \times C_{p_r^{e_r}}$, then the map
$$BC_{p_1^{e_1}} \vee \cdots \vee BC_{p_r^{e_r}} \lra BC_n,$$
induced by the canonical inclusion of groups on each factor, is a stable homotopy equivalence. It follows that the classes $\kappa^E_i(\rho) \in \E^*(BC_n)$ are completely determined by their restrictions to the Sylow subgroups $C_{p_i^{e_i}}$ of $C_n$. Thus to prove the equivalence of (i)---(iii) we may as well suppose that $C_n = C_{p^k}$ is a cyclic $p$-group, and also show these properties are equivalent to (iv). 

Clearly (iv) implies (i). Using Landweber--Novikov operations in the form of $s_{i+1}(\kappa^{MU}_{-1}) = (-1)^{i+1} \kappa_{i}^{MU}$ from Lemma \ref{lem:LNOps}, it follows that (i) implies (ii). Clearly (ii) implies (iii).  We shall show that (iii) implies (iv) using Nielsen's classification described above.

Recall that our convention for the complex orientation of $\K$ is such that $x_n := c_1^K(L_n) = 1-[L_n^\vee] \in \K^0(BC_n)$, where $L_n$ is the tautological representation of $C_n = \mu_n$ on $\bC$ in which the generator acts as $e^{2\pi i/n}$. If $k$ divides $n$ then $\mu_{n/k}$ is a subgroup of $\mu_n$, and the restriction of $x_n$ is $x_{n/k}$.

In Nielsen's classification $\alpha_i$ divides $n=p^k$ so is a $p$-th power, and is the order of a nontrivial stabiliser so is divisible by $p$. Furthermore $\beta_i$ is coprime to $\alpha_i$, so we can write $A_i \alpha_i + B_i \beta_i = 1$ for integers $A_i$ and $B_i$ and furthermore $B_i$ and $\beta_i$ are not divisible by $p$. Given $\alpha_i$, the data $[B_i] = [\beta_i]^{-1} \in \bZ/\alpha_i$ is equivalent to the data $[\beta_i] \in \mathbb{Z}/\alpha_i$, and hence to the data $\beta_i/\alpha_i \in \bQ/\bZ$.

By (c) there is an isomorphism $(T_{x_i}\Sigma)^{\otimes \beta_i} \cong L_{\alpha_i}$ of representations of $G_{x_i} = \mu_{\alpha_i}$, and so an isomorphism 
$$T_{x_i}\Sigma \cong (T_{x_i}\Sigma)^{\otimes(A_i \alpha_i + B_i \beta_i)} \cong ((T_{x_i}\Sigma)^{\otimes \beta_i})^{\otimes B_i} \cong L_{\alpha_i}^{\otimes B_i}$$
of representations of $G_{x_i} = \mu_{\alpha_i}$. It follows that
$$c_1^{K}(T_{x_i} \Sigma) = [B_i](x_{\alpha_i}) \in \K^0(BG_{x_i})$$
where $x_{\alpha_i} = c_1^{K}(L_{\alpha_i})$ as above.  Thus for all $j \geq 1$, using Lemma \ref{lem:QuillenTrfLem} we have
$$\kappa_j^{K}(\rho) = \sum_{i=1}^q \mathrm{Ind}_{G_{x_i}}^G([B_i](x_{\alpha_i})^j) = \sum_{i=1}^q [B_i](x_{n})^j \cdot \frac{[n](x_n)}{[\alpha_i](x_n)} \in \K^0(BC_n).$$

Now $\K^0(BC_n)$ is the completion of the complex representation ring $R(C_n) = \bZ[L_n]/(L_n^{n}-1)$ at its augmentation ideal $I=(L_n-1) = (1-L_n^\vee)$, and as $n=p^k$ it follows from \cite[III Proposition 1.1]{AtiyahTall} that the $I$-adic topology on $I$ coincides with the $p$-adic topology. Thus
$$\K^0(BC_n) = \mathbb{Z} \times_{\mathbb{Z}_p} \mathbb{Z}_p[x_n]/(1-(1-x_n)^{p^k}),$$
so that there is a homomorphism $\Phi_k$ to $\mathbb{Z}_p[\xi_{p^k}]$, the extension of the $p$-adics obtained by adjoining a primitive $p^k$-th root of unity, given by by $1-x_n \mapsto \xi_{p^k}$. This homomorphism sends $[B](x_n)$ to $1-\xi_{p^k}^B$, so applying it to $\kappa^K_j(\rho)=\kappa^K_j(\rho')$ gives identities
\begin{equation}\label{eq:PS}
\sum_{\substack{i \text{ such that } \\ p^k \mid \alpha_i}} (1-\xi_{p^k}^{B_i})^j = \sum_{\substack{i \text{ such that } \\ p^k \mid \alpha'_i}} (1-\xi_{p^k}^{B'_i})^j \in \mathbb{Z}_p[\xi_{p^k}] \quad \text{ for all $j \geq 1$}.
\end{equation}
Here we have used that $\Phi_k$ annihilates $\frac{[n](x_n)}{[\alpha_i](x_n)}$ if $p^k  \nmid \alpha_i$. Of course $p^k \mid \alpha_i$ just means $p^k = \alpha_i$, but we write the argument this way so that below it will easily generalise.

If $p_j(z_1, \dots, z_N) = \sum_{i=1}^N z_i^j$ denotes the power sum, there is an identity
$$\exp\left(-\sum_{j=1}^\infty \tfrac{1}{j} p_j(z_1, \ldots, z_N) T^j\right) = \prod_{i=1}^N (1-z_i T) \in \bQ[z_1, \ldots, z_N][[T]].$$
By functoriality, the same holds in $R[[T]]$ for any $\bQ$-algebra $R$ and sequence of elements $z_1, \ldots, z_N \in R$. Note that extending a sequence by taking $z_{N+1}=0$ gives exactly the same identity.

This applies to the above considered as lying in the $\bQ$-algebra given by the field of fractions $\mathbb{Q}_p(\xi_{p^k})$, where the identity \eqref{eq:PS} of power sums shows that
$$\prod_{\substack{i \text{ such that } \\ p^k = \alpha_i}} (1-(1-\xi_{p^k}^{B_{i}}) T) = \prod_{\substack{i \text{ such that } \\ p^k = \alpha'_i}} (1-(1-\xi_{p^k}^{B'_{i}}) T) \in \mathbb{Q}_p(\xi_{p^k})[[T]].$$
Both sides define elements of the subring $\mathbb{Q}(\xi_{p^k})[T]$, so they are equal there. As $p$ does not divide $B_i$ or $B'_i$, both $1-\xi_{p^k}^{B_{i}}$ and $1-\xi_{p^k}^{B'_{i}}$ are nonzero. By unique factorisation of polynomials over a field it follows that the degrees of these polynomials are equal, so
$$\#\{i \text{ such that } p^k = \alpha_i\} = \#\{i \text{ such that } p^k = \alpha'_i\},$$
and we may order the fixed points with stabilisers having order $p^k$ so that
$$1-\xi_{p^k}^{B_{i}} = 1-\xi_{p^k}^{B'_{i}} \in \bZ[\xi_{p^k}] \subset \mathbb{Q}_p(\xi_{p^k}).$$
This is only possible if $B_{i} \equiv B'_{i} \mod p^k$, so the fixed point data of the two actions at points of maximal isotropy are the same.

We generalise the above as follows. For each $m = 1,2,\ldots, k$ there is a homomorphism $\Phi_m : \K^0(BC_n) \to \mathbb{Z}_p[\xi_{p^m}]$ given by $1-x_n \mapsto \xi_{p^m}$. This satisfies
$$\Phi_m\left(\frac{[n](x_n)}{[\alpha_i](x_n)}\right) = \Phi_m(\emptyset_{n/\alpha_i}([\alpha_i](x_n))) = \begin{cases}
0 & p^m \nmid \alpha_i\\
\frac{n}{\alpha_i} & p^m \mid \alpha_i
\end{cases}$$
and so applying it to $\kappa^K_j(\rho)=\kappa^K_j(\rho')$ gives 
\begin{equation*}
\sum_{\substack{i \text{ such that } \\ p^m \mid \alpha_i}} \tfrac{n}{\alpha_i} (1-\xi_{p^m}^{B_i})^j = \sum_{\substack{i \text{ such that } \\ p^m \mid \alpha'_i}} \tfrac{n}{\alpha'_i} (1-\xi_{p^m}^{B'_i})^j \in \mathbb{Z}_p[\xi_{p^m}] \quad \text{ for all $j \geq 1$}.
\end{equation*}
By downwards induction on $m$, with the base case $m=k$ proved above, the terms with $p^{m+1} \mid \alpha_i$ and $p^{m+1} \mid \alpha'_i$ already match up, leaving
\begin{equation*}
\sum_{\substack{i \text{ such that } \\ p^m = \alpha_i}} (1-\xi_{p^m}^{B_i})^j = \sum_{\substack{i \text{ such that } \\ p^m = \alpha'_i}} (1-\xi_{p^m}^{B'_i})^j \in \mathbb{Z}_p[\xi_{p^m}] \quad \text{ for all $j \geq 1$}.
\end{equation*}
Proceeding with the power-sum method as above, we find that the two actions have the same number of points with isotropy of order $p^m$, and that these may be ordered so that $B_{i} \equiv B'_{i} \mod p^m$.
\end{proof}

\begin{rem}
The restriction to prime-power order is necessary: it is not hard to e.g.\ construct nonconjugate actions of  $C_6$ on a surface which become conjugate when restricted to $C_2$ or to $C_3$, and hence have equal Mumford--Morita--Miller classes.
\end{rem}

The following gives a formula for $\kappa_{-1}^{MU}(\rho)$ for cyclic group actions: it would be interesting to try to extend it to products of cyclic groups and hence, via \eqref{eq:hkr}, obtain a formula for general finite group actions.

\begin{cor}\label{cor:KappaMinusOneCyclic}
Let $\rho$ be a faithful orientation-preserving action of a cyclic group $G=C_n$ on the oriented surface $\Sigma_g$. Then in $\MU^*(BC_n) \otimes \bQ$ we have
$$\kappa^{MU}_{-1}(\rho) = A[\mathbb{P}^1] \cdot \emptyset_n(x_n) +  \sum_{i=1}^q \emptyset_{n/\alpha_i}([\alpha_i](x_n)) \cdot \frac{\emptyset_{\beta_i}([B_i](x_{n})) - \beta_i/\alpha_i\cdot \emptyset_{\alpha_i}(x_{n})}{x_n}$$
where the quotients are formed in $\MU^*[[x_n]] \otimes \bQ$, and $A \in \bQ$ is determined by 
$$1-g = nA - \sum_{i=1}^q n/\alpha_i \cdot (\binom{\beta_i}{2} B_i - \tfrac{\beta_i}{\alpha_i} \binom{\alpha_i}{2}).$$
\end{cor}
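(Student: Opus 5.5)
The plan is to avoid computing $\kappa^{MU}_{-1}(\rho)=\pi^{MU}_!(1)$ directly, since Theorem \ref{thm:LocalisationTheorem} only handles classes of degree $\geq 2$. Instead I will compute $x_n\cdot\kappa^{MU}_{-1}(\rho)$ and then divide by $x_n$, using Lemma \ref{lem:AnnOfx} to control the ambiguity.

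First, by the projection formula, $x_n\cdot \kappa^{MU}_{-1}(\rho) = \pi^{MU}_!(\pi^* x_n)$ with $\pi^*x_n\in \MU^2_G(\Sigma)$. So Theorem \ref{thm:LocalisationTheorem} (ii) applies and gives
$$x_n\kappa^{MU}_{-1}(\rho) = C\,\emptyset_n(x_n) + \sum_{i=1}^q \mathrm{Ind}_{\mu_{\alpha_i}}^{C_n}\left(\frac{x_{\alpha_i}}{c_1^{MU}(T_{x_i}\Sigma)}\right).$$
Here I use $\mathrm{Ind}_{\{e\}}^{C_n}(1)=\emptyset_n(x_n)$ by \eqref{eq:QuillenTrf}.

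Next I need an explicit quotient of $x_{\alpha_i}$ by $c_1^{MU}(T_{x_i}\Sigma)$. As in the proof of Corollary \ref{cor:DistinguishCylic}, $T_{x_i}\Sigma\cong L_{\alpha_i}^{\otimes B_i}$, so $c_1^{MU}(T_{x_i}\Sigma)=[B_i](x_{\alpha_i})$. Since $\beta_iB_i = 1-A_i\alpha_i$ and $[\alpha_i](x_{\alpha_i})=0$ in $\MU^*(BC_{\alpha_i})$, the formal group law gives
$$x_{\alpha_i}=[\beta_i]([B_i](x_{\alpha_i})) = [B_i](x_{\alpha_i})\cdot \emptyset_{\beta_i}([B_i](x_{\alpha_i})).$$
So $\emptyset_{\beta_i}([B_i](x_{\alpha_i}))$ is a valid quotient. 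Lemma \ref{lem:QuillenTrfLem} then turns its induction into $\emptyset_{n/\alpha_i}([\alpha_i](x_n))\cdot\emptyset_{\beta_i}([B_i](x_n))$.

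Now I rewrite each summand by adding and subtracting $\tfrac{\beta_i}{\alpha_i}\emptyset_{n/\alpha_i}([\alpha_i](x_n))\,\emptyset_{\alpha_i}(x_n)$, which equals $\tfrac{\beta_i}{\alpha_i}\emptyset_n(x_n)$ because $[n](x)=[n/\alpha_i]([\alpha_i](x))$. This gives
$$x_n\kappa^{MU}_{-1}(\rho) = C'\emptyset_n(x_n)+\sum_i \emptyset_{n/\alpha_i}([\alpha_i](x_n))\left(\emptyset_{\beta_i}([B_i](x_n))-\tfrac{\beta_i}{\alpha_i}\emptyset_{\alpha_i}(x_n)\right),$$
with $C'=C+\sum_i\beta_i/\alpha_i$. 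Each bracket has constant term $\beta_i-\beta_i=0$, so it is divisible by $x_n$ in $\MU^*[[x_n]]\otimes\bQ$.

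Restricting to a point kills the left side (it is a multiple of $x_n$) and kills the sum. What remains is $C'n=0$, so $C'=0$. Hence $x_n\cdot(\kappa^{MU}_{-1}(\rho)-\Sigma)=0$, where $\Sigma$ denotes the sum in the statement. Lemma \ref{lem:AnnOfx}, after tensoring with $\bQ$ (which is flat, so annihilators are computed compatibly), then shows that the difference is $A'\cdot\emptyset_n(x_n)$ with $A'\in\MU^{-2}\otimes\bQ=\bQ[\bP^1]$. Write $A'=A[\bP^1]$.

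Finally I determine $A$ by restricting to a point, where Lemma \ref{lem:FiltEstimate} gives $(1-g)[\bP^1]$. By \eqref{eq:nSeriesMU}, $\emptyset_m(y)=m-\binom m2[\bP^1]y+\cdots$, so:
\begin{itemize}
\item $\emptyset_{n/\alpha_i}([\alpha_i](x_n))$ has constant term $n/\alpha_i$;
\item the $x_n$-coefficient of $\emptyset_{\beta_i}([B_i](x_n))-\tfrac{\beta_i}{\alpha_i}\emptyset_{\alpha_i}(x_n)$ is $-[\bP^1]\bigl(\binom{\beta_i}{2}B_i-\tfrac{\beta_i}{\alpha_i}\binom{\alpha_i}{2}\bigr)$.
\end{itemize}
Summing these contributions yields exactly the stated relation for $A$.

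The main obstacle I expect is the bookkeeping in the division step: justifying that the rational quotient by $x_n$ is well-defined and compatible with the relation $[n](x_n)=0$. Lemma \ref{lem:AnnOfx} is stated integrally, and its proof relies on completeness. I would redo that argument in $\MU^*_\bQ(BC_n)=\MU^*_\bQ[[x_n]]/([n](x_n))$, using Remark \ref{rem:RatIsCompleteHausdorff}-style completeness, checking that the constant-term cancellation is precisely what makes each numerator divisible by $x_n$.
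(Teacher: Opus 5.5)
Your proposal is correct and follows the paper's proof essentially step for step: Theorem \ref{thm:LocalisationTheorem}(ii) applied to $\pi^*x_n$, the quotient $\emptyset_{\beta_i}([B_i](x_{\alpha_i}))$, Lemma \ref{lem:QuillenTrfLem}, fixing the constant by restriction to a point, division via Lemma \ref{lem:AnnOfx}, and reading off $A$ from the linear term of \eqref{eq:nSeriesMU}, with the only cosmetic difference being that you rearrange before solving for $C$. Your flatness remark already justifies the division step, so drop the fallback in your last paragraph: the ring $\MU^*_\bQ(BC_n)=\MU^*_\bQ[[x_n]]/([n](x_n))$ collapses to $\MU^*_\bQ$ (because $\emptyset_n(x_n)$ becomes a unit there), so it is not $\MU^*(BC_n)\otimes\bQ$, where the statement actually lives.
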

Note that although this formula is in the rationalisation $\MU^*(BC_n) \otimes \bQ$, as $\MU^*(BC_n)$ is torsion-free (Theorem \ref{thm:MUCnFlat}) it determines $\kappa^{MU}_{-1}(\rho) \in \MU^*(BC_n)$.

\begin{proof}
We apply Theorem \ref{thm:LocalisationTheorem} with $y = \pi^*(x_n) \in \MU^2_{C_n}(\Sigma_g)$, giving
$$x_n \cdot \kappa^{MU}_{-1}(\rho) = x_n \cdot \pi^{MU}_!(1) = \pi^{MU}_!(y) = C \cdot \mathrm{Ind}_{\{e\}}^{C_n}(1) + \sum_{i=1}^q \mathrm{Ind}_{C_{\alpha_i}}^{C_n}\left(\frac{x_{\alpha_i}}{[B_i](x_{\alpha_i})}\right).$$
We have $x_{\alpha_i} = [\beta_i]([B_i](x_{\alpha_i}))$ mod $[\alpha_i](x_{\alpha_i})$, so we can choose to form the quotient
$$\frac{x_{\alpha_i}}{[B_i](x_{\alpha_i})} = \emptyset_{\beta_i}([B_i](x_{\alpha_i}))$$
Restricting to a point this gives $0 = C \cdot |C_n| + \sum_{i=1}^q |C_n/C_{\alpha_i}| \beta_i$, so $C = - \sum_{i=1}^q \beta_i/\alpha_i$ (which will indeed be an integer). With some rearranging, in $\MU^*(BC_n) \otimes \bQ$ this gives
$$x_n \cdot \kappa^{MU}_{-1}(\rho) = \sum_{i=1}^q \emptyset_{n/\alpha_i}([\alpha_i](x_n)) \cdot \left( \emptyset_{\beta_i}([B_i](x_{n})) - \beta_i/\alpha_i\cdot \emptyset_{\alpha_i}(x_{n}) \right).$$
By Lemma \ref{lem:AnnOfx}, in $\MU^*(BC_n) \otimes \bQ$ we may divide by $x_n$ uniquely up to adding on $\MU^*$-multiples of $\emptyset_n(x_n)$, so
$$\kappa^{MU}_{-1}(\rho) = A[\mathbb{P}^1] \cdot \emptyset_n(x_n) +  \sum_{i=1}^q \emptyset_{n/\alpha_i}([\alpha_i](x_n)) \cdot \frac{\emptyset_{\beta_i}([B_i](x_{n})) - \beta_i/\alpha_i\cdot \emptyset_{\alpha_i}(x_{n})}{x_n}$$
for some $A[\mathbb{P}^1] \in \MU^{-2} \otimes \bQ$, which depends on how we choose to form the quotients on the right-hand terms in the ring $\MU^*(BC_n)$. If we form the quotients in $\MU^*[[x_n]]$, where they are unique, then using $[n](y) = ny - \binom{n}{2}[\mathbb{P}^1]y^2 + \cdots$ we have
$$[\Sigma_g] = nA[\mathbb{P}^1] - \sum_{i=1}^q n/\alpha_i \cdot (\binom{\beta_i}{2} B_i - \tfrac{\beta_i}{\alpha_i} \binom{\alpha_i}{2})[\mathbb{P}^1]$$
and $[\Sigma_g] = (1-g)[\mathbb{P}^1]$ so $A$ is determined by
$$1-g = nA - \sum_{i=1}^q n/\alpha_i \cdot (\binom{\beta_i}{2} B_i - \tfrac{\beta_i}{\alpha_i} \binom{\alpha_i}{2}).$$
\end{proof}

\subsection{Relation to the $G$-signature}
If $\Sigma$ is an oriented surface, then the intersection form endows $H^1(\Sigma;\bR)$ with a nondegenerate symplectic form $\lambda(-,-)$. Choosing an auxiliary inner product $\langle-,-\rangle$, we obtain an operator $A$ on this space defined by $\lambda(x,y) = \langle x, Ay \rangle$. As $\lambda$ is symplectic, $A^* = -A$. Thus we may form $J := A/\sqrt{AA^*}$, taking the positive square root of the positive self-adjoint operator $AA^*$, and this satisfies $J^2 = -I$, so $(H^1(\Sigma;\bR), J)$ is a complex vector space. 

If $\rho : G \to \mathrm{Diff}^+(\Sigma)$ is an orientation-preserving action of a finite group $G$ on $\Sigma$ then we may take the inner product $\langle-,-\rangle$ to be $G$-invariant (by averaging), and hence obtain a complex $G$-representation $(H^1(\Sigma;\bR), J)$. The \emph{$G$-signature} of this action is the virtual complex representation
$$\mathrm{sign}(\rho) := (H^1(\Sigma;\bR), -J) - (H^1(\Sigma;\bR), J) \in R(G).$$

After passing from $R(G)$ to $R(G)^\wedge_I = \K^0(BG)$, the $G$-signature may be expressed in terms of the $K$-theory Mumford--Morita--Miller classes $\kappa^K_i$. Namely, using that $\kappa_{-1}^K = \pi^K_!(1) = 1 - [\mathcal{H}_1^\vee]$ and checking carefully that $\cH_1^\vee$ corresponds to $(H^1(\Sigma;\bR), J)$ we have
 $$\mathrm{sign}(\rho) = \kappa_{-1}^K - \overline{\kappa_{-1}^K} \in R(G)^\wedge_I = \K^0(BG).$$

\begin{rem}
This gives another approach to (i) $\Rightarrow$ (iv) in Corollary \ref{cor:DistinguishCylic}, as follows. If $\kappa_{-1}^{MU}(\rho) = \kappa_{-1}^{MU}(\rho')$ then $\kappa_{-1}^{K}(\rho) = \kappa_{-1}^{K}(\rho')$ and so $\mathrm{sign}(\rho) = \mathrm{sign}(\rho') \in R(G)^\wedge_I$. Under the assumption that $G$ is a cyclic $p$-group $R(G)$ injects into its $I$-adic completion, so in fact $\mathrm{sign}(\rho) = \mathrm{sign}(\rho') \in R(G)$. But then a theorem of Edmonds--Ewing \cite[Theorem 3.2 and its proof]{EdmondsEwing} implies (using the classification of conjugacy classes of periodic diffeomorphisms in terms of fixed-point data, so not very different from the argument we gave) that $\rho$ is conjugate to $\rho'$.
\end{rem}

\subsection{The class $\kappa_0$}

For a surface bundle $\pi : E \to B$ the class $\kappa_0^{MU}(\pi)$ is equal to $\trf_\pi^*(1)$, the Becker--Gottlieb transfer applied to the unit. As the unit $1 \in \MU^0(E)$ lifts to the unit $1 \in \pi^0_S(E)$ in stable cohomotopy, there is a class
$$\kappa_0^S(\pi) = \trf_\pi^*(1) \in \pi^0_S(B)$$
which maps to $\kappa_0^{MU}(\pi)$ under the unit map.

When $B=BG$ for $G$ finite, the Segal conjecture \cite{Carlsson} identifies $\pi^0_S(BG)$ with the completion $A(G)^{\wedge}_I$ of the Burnside ring of $G$. Additivity of the Becker--Gottlieb transfer and the decomposition used in the proof of Theorem \ref{thm:LocalisationTheorem} (ii) give the following (see \cite[Section 5.1]{GMT} for the case of cyclic groups):

\begin{thm}
Let $G$ be finite and $\rho$ be a faithful action on a surface $\Sigma$, and keep the notation of Section \ref{sec:FinGpAct}. Then
\begin{equation}\label{eq:Burnside}
\pushQED{\qed} 
\kappa_0^{S}(\rho) = \frac{\chi(\Sigma) - \vert S \vert}{\vert G \vert} \cdot [G/e] + \sum_{[p] \in S/G} [G/G_p] \in \pi^{0}_S(BG) = A(G)^{\wedge}_I. \qedhere
\popQED
\end{equation}
\end{thm}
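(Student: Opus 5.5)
We will prove \eqref{eq:Burnside} by running the cut-and-paste argument of Theorem~\ref{thm:LocalisationTheorem}~(ii) directly at the level of Becker--Gottlieb transfers in stable cohomotopy, rather than in $\MU$-cohomology. Choose a $G$-invariant Riemannian metric and decompose $\Sigma$ $G$-equivariantly as $\Sigma = D \cup (\Sigma - \mathring{D})$. Here $D$ is a small closed invariant disc neighbourhood of $S$, so $D \simeq_G S$. The two pieces meet along $\partial D = S(T\Sigma\vert_S)$, a disjoint union of circles permuted by $G$. Applying the Borel construction gives a decomposition of the bundle $EG\times_G\Sigma \to BG$ into three fibrewise pieces, with fibres $D$, $\Sigma-\mathring D$ and $\partial D$.

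\textbf{Additivity.} We use the additivity of the Becker--Gottlieb transfer for such decompositions, in the form $\trf_\Sigma = \trf_D + \trf_{\Sigma - \mathring D} - \trf_{\partial D}$ in $\{\Sigma^\infty BG_+, \Sigma^\infty (EG\times_G \Sigma)_+\}$. This holds for manifolds with boundary, using the version of the transfer defined for compact fibres with boundary. We apply $(-)^*(1)$ to each term and then evaluate the three pieces separately.
\begin{enumerate}[(a)]
\item The piece $EG\times_G D \simeq \coprod_{[p]\in S/G} BG_p$ is a disjoint union of finite covers of $BG$. Its transfer applied to $1$ is $\sum_{[p]} [G/G_p]$, because under the Segal conjecture the transfer of a finite cover $BH \to BG$ sends $1$ to $[G/H]$.
\item The piece $EG\times_G \partial D$ is a bundle whose fibres are disjoint unions of circles. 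We show its transfer of $1$ vanishes. It factors through the transfer of a circle bundle over $\coprod BG_p$, and $G_p$ acts freely on the circle, so this bundle is $S^1/G_p \to BG_p$ up to homotopy. Its fibrewise Euler characteristic is zero. More precisely, the vertical tangent bundle has a nowhere-zero section (the rotation vector field, which is $G_p$-invariant), and by the standard vanishing property of the Becker--Gottlieb transfer this gives $\trf^* = 0$.
\item The piece $\Sigma - \mathring D$ has a free $G$-action, so $EG\times_G(\Sigma-\mathring D) \simeq (\Sigma - \mathring D)/G$. Its transfer factors as the map induced by the classifying map $(\Sigma - \mathring D)/G \to BG$ composed with the transfer of the free $G$-space. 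For a free finite $G$-CW complex $X$, the class $\trf^*(1)$ is $\chi(X/G)\,[G/e]$. To see this, decompose $X$ into free cells $G\times D^k$ and use additivity again: each cell contributes $(-1)^k$ times the transfer of $EG\times_G G = * \to BG$, which is $[G/e]$. Finally $\chi((\Sigma-\mathring D)/G) = \chi(\Sigma - \mathring D)/|G| = (\chi(\Sigma) - |S|)/|G|$.
\end{enumerate}
Summing the three contributions gives \eqref{eq:Burnside}. As a consistency check, restricting to a point recovers $\chi(\Sigma)$, and mapping to $\MU$ recovers Corollary~\ref{cor:FinGpAct}.

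\textbf{Main obstacle.} The delicate step is justifying additivity and the vanishing in (b) for transfers of bundles whose fibres have boundary. If that is awkward, a fallback is to avoid boundaries altogether. One works in equivariant stable homotopy, where $\trf^*(1)$ for the bundle $EG \times_G \Sigma \to BG$ is the completion of the equivariant Euler characteristic $\chi_G(\Sigma) \in A(G)$. The identity $\chi_G(\Sigma) = \sum_{\text{orbit types of cells}} (-1)^k [G/G_\sigma]$ for a $G$-CW structure then gives the formula directly. For this, take $S$ among the $0$-cells and give all other cells free orbits.
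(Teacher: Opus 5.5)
Your proposal is correct and is essentially the paper's argument. The paper only says the formula follows from additivity of the Becker--Gottlieb transfer applied to the decomposition used in the proof of Theorem~\ref{thm:LocalisationTheorem}~(ii); your three-piece computation spells this out: the finite covers give $\sum_{[p]}[G/G_p]$, the circle pieces have null transfer, and the free part gives $\chi((\Sigma-\mathring D)/G)\,[G/e]$ (note that step (b) is also a special case of step (c), since $\partial D$ is a free $G$-space with $\chi(\partial D/G)=0$).
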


\begin{cor}\mbox{}
\begin{enumerate}[(i)]
\item The class $\kappa_0^{S}(\rho) \in \pi_S^0(BG)$ is determined by the numbers $\chi(\Sigma^H)$ for every subgroup $H \leq G$. 

\item The class $\kappa_0^{MU}(\rho)\otimes \bQ \in \MU^0(BG) \otimes \bQ$ is determined by the numbers $\chi(\Sigma^H)$ for every abelian subgroup $H \leq G$ of prime power order.

\item The class $\kappa_0^{K}(\rho)\otimes \bQ \in \K^0(BG) \otimes \bQ$ is determined by the numbers $\chi(\Sigma^H)$ for every cyclic subgroup $H \leq G$ of prime power order.
\end{enumerate}
\end{cor}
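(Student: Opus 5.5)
The plan is to start from the Burnside ring formula \eqref{eq:Burnside} and use the standard marks homomorphism. For (i), note that $\kappa_0^S(\rho)$ is the image of the element
$$X(\rho) := \frac{\chi(\Sigma) - \vert S \vert}{\vert G \vert} \cdot [G/e] + \sum_{[p] \in S/G} [G/G_p] \in A(G)$$
under $A(G) \to A(G)^\wedge_I$. I claim $X(\rho)$ is the class of ``$\Sigma$ as a $G$-set up to Euler characteristic'', i.e.\ the equivariant Euler characteristic of $\Sigma$. To see this, compute its marks. For a subgroup $H \leq G$, the mark of $[G/K]$ at $H$ is $|(G/K)^H|$. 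For $H=e$ the mark of $X(\rho)$ is $\chi(\Sigma) - |S| + \sum_{[p]}|G/G_p| = \chi(\Sigma)$. For $H \neq e$, $[G/e]$ has mark $0$, and $\sum_{[p]}|(G/G_p)^H|$ counts the points $p \in S$ fixed by $H$, i.e.\ $|\Sigma^H| = \chi(\Sigma^H)$, since $\Sigma^H \subset S$ is finite. So the marks of $X(\rho)$ are exactly $H \mapsto \chi(\Sigma^H)$. The marks homomorphism $A(G) \to \prod_{(H)} \bZ$ is injective, so these numbers determine $X(\rho)$ and hence $\kappa_0^S(\rho)$.

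For (ii), I will use that $\kappa_0^{MU}(\rho)$ is the image of $\kappa_0^S(\rho)$ under the unit map. By \eqref{eq:hkr}, $\MU^0(BG)\otimes\bQ$ injects into $\lim_{A \in \mathcal{A}(G)} \MU^0(BA)\otimes\bQ$. So it suffices to show that the restriction of $\kappa_0^{MU}(\rho)$ to each abelian $p$-subgroup $A$ is determined by the given data. Restriction of the $G$-action to $A$ is again a faithful action, and $\kappa_0$ is natural under pullback. So $\kappa_0^{MU}(\rho|_A)$ is the image of $X(\rho|_A) \in A(A)$, whose marks are $\chi(\Sigma^H)$ for $H \leq A$. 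These $H$ are themselves abelian of prime power order, so this data determines $X(\rho|_A)$ by (i) applied to $A$.

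For (iii), I use the analogous Artin-type statement in $K$-theory. By the Atiyah completion theorem and Artin's induction theorem, $\K^0(BG)\otimes\bQ$ is detected by restriction to cyclic subgroups. Any $\K$-class pulled back to a cyclic group $C = C_{p_1^{e_1}}\times\cdots$ is determined by its restrictions to the Sylow subgroups, because the wedge decomposition used in the proof of Corollary~\ref{cor:DistinguishCylic} is a stable equivalence. So it suffices to treat cyclic $p$-groups $H$, and then (i) applied to $H$ gives what we need: the marks of $X(\rho|_H)$ involve only subgroups of $H$, which are cyclic of prime power order.

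The main obstacle is the rational detection in (iii). Concretely, I need that $\K^0(BG)\otimes\bQ \to \prod_{C \text{ cyclic}} \K^0(BC)\otimes\bQ$ is injective. I would deduce this from the HKR isomorphism \eqref{eq:hkr} for $\E=\K$ (a height-one theory, where HKR reduces to cyclic $p$-subgroups). Alternatively, I would use Artin induction in $R(G)\otimes\bQ$, which passes to the completion via the Atiyah--Segal completion theorem because completion is exact for finitely generated $R(G)$-modules.
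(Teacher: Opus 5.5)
Your proposal is correct and follows the paper's proof. Part (i) uses the marks homomorphism on the Burnside-ring lift, and you usefully spell out why the marks are $\chi(\Sigma^H)$. Part (ii) uses \eqref{eq:hkr} plus (i) for each abelian $p$-subgroup. Part (iii) uses detection on cyclic subgroups followed by the stable splitting of $BC_n$ into its Sylow pieces.

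The only difference is in (iii). The paper states cyclic detection in the uncompleted ring $R(G)$ via the linearisation $A(G)\to R(G)$, whereas you state it directly for $\K^0(BG)\otimes\bQ$, either via HKR or via exactness of $I$-adic completion on finitely generated $R(G)$-modules. This makes explicit the passage to the completion that the paper leaves implicit.
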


\begin{proof}
The formula
$$\mathcal{K} := \frac{\chi(\Sigma) - \vert S \vert}{\vert G \vert} \cdot [G/e] + \sum_{[p] \in S/G} [G/G_p]$$
gives an element of the Burnside ring $A(G)$. For a subgroup $H \leq G$ we have $\vert \mathcal{K}^H \vert = \chi(\Sigma^H)$, and it is classical that an element  $\mathcal{K} \in A(G)$ is determined by the function $H \mapsto \vert \mathcal{K}^H \vert$. As $\mathcal{K}$ determines $\kappa_0^S(\rho)$, the first statement follows.

For the second statement, it follows from \eqref{eq:hkr} that $\kappa_0^{MU}(\rho)\otimes \bQ$ is determined by $\kappa_0^{MU}(\rho\vert_A)\otimes \bQ$ for all abelian subgroups $A \leq G$ of prime power order; to these we then apply the first case of this statement.

For the third statement, consider the image of $\mathcal{K}$ under the linearisation map $A(G) \to R(G)$ to the complex representation ring. By character-theory, elements here are detected by restriction to cyclic subgroups. On the other hand, as in the beginning of the proof of Corollary \ref{cor:DistinguishCylic} elements of $\K^0(BC_n)$ are determined by their restrictions to the prime-power cyclic subgroups.
\end{proof}

The class $\kappa_0^{H} \in \H^0(\mathcal{M}_g;\bZ)$ is just the integer $\chi(\Sigma_g)=2-2g$, but $\kappa_0^{S} \in \pi_S^0(\mathcal{M}_g)$ is not simply an integer, or even a scalar. Nonetheless, it is an algebraic integer in the sense that it satisfies a monic polynomial with integer coefficients, as we now explain. Recall that $\Gamma_g$ has finitely-many conjugacy classes of finite subgroups.

\begin{thm}
Let $p(z) \in \bZ[z]$ be the monic polynomial which has a root at $\chi(\Sigma_g^F)$ for every finite $p$-subgroup $F \leq \Gamma_g$. Then some power of $p(\kappa_0^{S}) \in \pi_S^0(\mathcal{M}_g)$ vanishes.
\end{thm}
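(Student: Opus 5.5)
We reduce to finite subgroups using Theorem \ref{thm:FinSubgpDetNilp}, applied with $\E$ the sphere spectrum (stable cohomotopy is a multiplicative cohomology theory, and $\Gamma_g$ admits a finite $\underline{E}\Gamma_g$ and a torsion-free finite-index subgroup). That theorem says the kernel of
$$\pi_S^0(B\Gamma_g) \lra \prod_{F \in \mathcal{F}(\Gamma_g)} \pi_S^0(BF)$$
is nilpotent. So it suffices to show that $p(\kappa_0^S)$ restricts to a nilpotent element in $\pi_S^0(BF)$ for every finite $F \leq \Gamma_g$. If $p(\kappa_0^S)^{N_F}\vert_{BF}=0$ for each $F$, then $p(\kappa_0^S)^{\max N_F}$ lies in the kernel, since there are finitely many conjugacy classes; a further power then vanishes. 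By naturality of the Becker--Gottlieb transfer, $\kappa_0^S\vert_{BF} = \kappa_0^S(\rho_F)$, where $\rho_F$ is the action of $F$ on $\Sigma_g$ given by Kerckhoff's theorem.

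For finite $F$, equation \eqref{eq:Burnside} says $\kappa_0^S(\rho_F)$ is the image of $\mathcal{K}\in A(F)$ under $A(F)\to A(F)^\wedge_I = \pi^0_S(BF)$, a ring map by the Segal conjecture. So it is enough to show $p(\mathcal{K})$ maps to a nilpotent element of $A(F)^\wedge_I$. We use the mark homomorphism $\phi=(\phi_H)_H : A(F)\to \prod_{(H)}\bZ$, with $\phi_H(\mathcal{K}) = \chi(\Sigma_g^H)$ as in the previous proof; it is injective.

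The main obstacle is that $p$ only has roots at $\chi(\Sigma_g^H)$ for $p$-subgroups $H$, while the marks $\phi_H$ run over all subgroups. The fix is that completion at $I$ only sees $p$-subgroups. Concretely, we use the known description of $A(F)^\wedge_I$: it is $\bZ$ times the product over primes $\ell$ of the $\ell$-adic completions of the summands of $A(F)$ supported on $\ell$-perfect parts, and the $\ell$-component is detected, modulo nilpotents, by marks at $\ell$-subgroups. Equivalently, an element $a$ with $\phi_H(a)=0$ for all prime-power-order $H$ has nilpotent image in $A(F)^\wedge_I$. This matches the role of prime-power subgroups in Theorem \ref{thm:ModNil}. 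For each such $H$ (here $p$ ranges over all primes, as in the statement), $\phi_H(p(\mathcal{K})) = p(\chi(\Sigma_g^H))=0$.

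If I cannot cite that description cleanly, I will argue as follows. First, $p(\mathcal{K})\in I$: restricting to $H=e$ gives the root $\chi(\Sigma_g)$. Second, I will show its image lies in the ideal of the completion that is topologically nilpotent. For this I use that the $I$-adic completion is computed prime by prime via idempotents of $A(F)\otimes \bZ_{(\ell)}$, indexed by $\ell$-perfect subgroups. After completion only the idempotent of the trivial $\ell$-perfect group survives, and its marks are governed by $\ell$-subgroups $H$, i.e.\ by $\phi_H$ with $O^\ell(H)=e$. An element of $A(F)\otimes\bZ_{(\ell)}$ whose marks vanish on $\ell$-subgroups is then a nilpotent element times this idempotent. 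This is the step needing the most care.
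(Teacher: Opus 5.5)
Your proposal is correct and follows essentially the same route as the paper. Both arguments reduce to finite subgroups via Theorem \ref{thm:FinSubgpDetNilp}, pass to the Burnside-ring lift $\mathcal{K}_F$ using Kerckhoff and the Segal conjecture, and then use that $I$-adic completion only sees $p$-subgroups. The citation you were after is Laitinen's result, which the paper uses: the kernel of $A(F)\to A(F)^\wedge_I$ is exactly the set of elements restricting to zero on every Sylow subgroup, equivalently those with vanishing marks at all prime-power-order subgroups. This gives $p(\kappa_0^S(F))=0$ outright rather than merely nilpotent, so your idempotent fallback is not needed.
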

\begin{proof}
By Theorem \ref{thm:FinSubgpDetNilp} it suffices to show that $p(\kappa_0^S(F)) \in \pi^0_S(BF)$ vanishes for every finite subgroup $F \leq \Gamma_g$, and by Kerckhoff's theorem these groups are realised by actual actions on surfaces. By the Segal conjecture $\pi^0_S(BF) = A(F)^\wedge_I$, the completion of the Burnside ring of $F$ at the augmentation ideal. The formula \eqref{eq:Burnside} describes a lift $\mathcal{K}_F \in A(F)$ of $\kappa_0^S(F)$ to the uncompleted Burnside ring. Now, Laitinen \cite[Proposition 1.10]{Laitinen} has shown that the kernel of the completion map $A(F) \to A(F)^\wedge_I \cong \pi^0_S(BF)$ coincides with the intersection of the kernels of the restriction maps $A(F) \to A(F_p)$ to all Sylow subgroups $F_p \leq F$. But for any $p$-subgroup $P \leq \Gamma_g$ the element $p(\mathcal{K}_P) \in A(P)$ satisfies
$$|p(\mathcal{K}_P)^{P}| = p(|\mathcal{K}_P^P|) = p(\chi(\Sigma_g^P))$$
which vanishes as $p$ has a root at $\chi(\Sigma_g^P)$ by design. Thus $p(\mathcal{K}_F) \in A(F)$ vanishes after restriction to any $p$-subgroup, so is in the kernel of the completion map and hence $p(\kappa_0^S(F)) =0 \in \pi^0_S(BF)$ for all finite subgroup $F \leq \Gamma_g$.
\end{proof}

\begin{ex}
In \cite{BroughtonLow} one may find, for $g \in \{2,3\}$, a list of all conjugacy classes of finite subgroups $G$ of $\Gamma_g$, as well as branching data for the associated branched cover $\Sigma_g \to \Sigma_g/G$. From this it is easy to compute $\chi(\Sigma_g^G)$ and so to compute polynomials annihilating $\kappa_0^{S}$ up to nilpotence in genus 2 and 3:
\begin{align*}
p_2(z) &= (z+2)(z-6)(z-2)(z-4)(z)(z-3)\\
p_3(z) &= (z+4)(z-8)(z-4)(z)(z-5)(z-2)(z-3).
\end{align*}
How can one determine the minimal polynomial of $\kappa_0^{E} \in \E^0(\mathcal{M}_g)$? What does it mean?
\end{ex}

\begin{ex}
In \cite[Table 5 (3.k)]{BroughtonLow}, there is described an action of $C_6$ on $\Sigma_3$ having branching data $(0;2,3,3,6)$. Thus
$$\kappa_0^{S}(C_6) = -2[\tfrac{C_6}{e}] + [\tfrac{C_6}{C_2}] + 2[\tfrac{C_6}{C_3}] + 1 \in \pi^0_S(BC_6),$$
and the right hand side describes an element $\mathcal{K} \in A(C_6)$. However $\mathcal{K}^{C_6}=1$, which is not a root of $p_3(z)$, so $p_3(\mathcal{K}) \neq 0 \in A(C_6)$. In fact
$$p_3(\mathcal{K}) =  -140\left([\tfrac{C_6}{C_3}]-2\right)\left([\tfrac{C_6}{C_2}]-3\right) \in A(C_6).$$
But this does vanish when restricted to $C_2, C_3 \leq C_6$, so $p_3(\kappa_0^S(C_6))=0 \in \pi^0_S(BC_6)$.
\end{ex}

\section{A genus 1 example: The universal elliptic curve}

The group $\Gamma_{1}^1 = \SL(2, \bZ)$ acts on the torus $T = \bR^2 / \bZ^2$, and the associated surface bundle
\begin{equation}\label{eq:UnivEll}
\pi : E\SL(2, \bZ) \times_{\SL(2, \bZ)} T \lra B\SL(2, \bZ)
\end{equation}
has a canonical section $s$ given by the fixed point $[0,0] \in T$ of the action. This surface bundle is a homotopy-theoretic model for the universal elliptic curve $\mathcal{C}_{1}^1 \to \mathcal{M}_{1}^1$.

The amalgamated free product decomposition $\SL(2, \bZ) = C_6 *_{C_2} C_4$ proves that the abelianisation map $a: \SL(2, \bZ) \to C_{12}$ induces an isomorphism on integral (co)homology, so is a stable homotopy equivalence and hence also induces an isomorphism on $\MU$-theory of the associated classifying spaces. Thus, by the calculation of the $\MU$-theory of cyclic groups, we have
$$\MU^*(B\SL(2, \bZ)) = \MU^*[[x]]/([12](x)).$$
Here $x := (Ba)^*(c_1^{MU}(L))$ where $L \to BC_{12}$ is the complex line bundle given by the action of $C_{12}$ on $\bC$ as the twelfth roots of unity. Using this amalgamated free product decomposition and Corollary \ref{cor:FinGpAct}, we are able to compute \emph{all} of the $\MU$-theory $\kappa$-classes of \eqref{eq:UnivEll}.

\begin{thm}\label{thm:elliptic}
Firstly $\kappa_i^{MU} = x^{i+1}\kappa_{-1}^{MU}$, and furthermore
the class $\kappa_{-1}^{MU}$
is 
$$\frac{1}{x} \left(1-\frac{[12](x)}{[3](x)} \cdot \left(\frac{1}{\tfrac{[12](x)}{[4](x)}-2}\right) + \frac{[12](x)}{[4](x)} + \left(\frac{\tfrac{[12](x)}{[2](x)}-2\tfrac{[12](x)}{[4](x)}}{3-\tfrac{[12](x)}{[6](x)}}\right) \right) + 4[\bP^1]\frac{[12](x)}{x}.$$
\end{thm}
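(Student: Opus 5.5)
The plan has two parts. First I identify the vertical tangent bundle, which gives the identity $\kappa_i^{MU} = x^{i+1}\kappa_{-1}^{MU}$. Then I compute $\kappa_0^{MU}$ by restricting to finite subgroups and divide the result by $x$ to obtain $\kappa_{-1}^{MU}$.

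\emph{Step 1: the vertical tangent bundle.} The bundle \eqref{eq:UnivEll} is flat: its vertical tangent bundle is $T_\pi \cong \pi^*V$, where $V \to B\SL(2,\bZ)$ is the oriented plane bundle of the standard representation on $\bR^2$. Since $\SL(2,\bR)\simeq \mathrm{SO}(2)$, this is a complex line bundle, so it is classified by an element of $\H^2(B\SL(2,\bZ);\bZ)=\bZ/12$. Its restriction to the order-4 element $\left(\begin{smallmatrix}0&-1\\1&0\end{smallmatrix}\right)$ is the faithful character $i$, and this should force $V\cong a^*L^{\otimes \pm 1}$. Fixing the orientation of $L$ so that the sign is $+1$ gives $c_1^{MU}(T_\pi)=\pi^*x$. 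The projection formula then yields
$$\kappa_i^{MU}=\pi_!^{MU}(\pi^*x^{i+1})=x^{i+1}\pi_!^{MU}(1)=x^{i+1}\kappa_{-1}^{MU}.$$

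\emph{Step 2: computing $\kappa_0^{MU}=x\kappa_{-1}^{MU}$.} The amalgam $\SL(2,\bZ)=C_6*_{C_2}C_4$ presents $B\SL(2,\bZ)$ as a homotopy pushout of $BC_6\leftarrow BC_2\to BC_4$. Its Mayer--Vietoris sequence has connecting map coming from $\MU^{\mathrm{odd}}(BC_2)$, which vanishes. So in even degrees restriction
$$\MU^*(B\SL(2,\bZ))\to \MU^*(BC_6)\oplus\MU^*(BC_4)$$
is injective, and it suffices to know the restrictions of $\kappa_0^{MU}$ to the two subgroups. For $C_4$ and $C_6$ acting linearly on $T$, Corollary~\ref{cor:FinGpAct} gives $\kappa_0^{MU}$ from fixed-point data. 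For $C_4$ the isotropy points are two points with stabiliser $C_4$ and one orbit of size $2$ with stabiliser $C_2$. For $C_6$ they are one point with stabiliser $C_6$, one orbit of size $2$ with stabiliser $C_3$, and one orbit of size $3$ with stabiliser $C_2$. The rotation numbers are read off from the characters, which are $i$, respectively $e^{2\pi i/6}$, so that $T_p T$ is $L$ or its powers restricted. Lemma~\ref{lem:QuillenTrfLem} expresses each term $\mathrm{Ind}_{C_m}^{C_k}(1)$ as $\emptyset_{k/m}([m](x_k))$. I then guess a single power series in $\MU^*[[x]]/([12](x))$ restricting correctly to both subgroups, built from the quotients $[12](x)/[d](x)$; these restrict to $\mathrm{Ind}$-type classes or to $0$ according to divisibility. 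The denominators in the stated formula, $\tfrac{[12](x)}{[4](x)}-2$ and $3-\tfrac{[12](x)}{[6](x)}$, have constant terms $1$, so they are units in $\MU^*[[x]]$. Their role is to correct the cross-terms that appear when a class built for one subgroup is restricted to the other.

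\emph{Step 3: dividing by $x$.} I form the quotient of the resulting expression by $x$ in $\MU^*[[x]]$, where it is unique. By Lemma~\ref{lem:AnnOfx}, $\kappa_{-1}^{MU}$ is this quotient plus $A[\bP^1]\emptyset_{12}(x)$ for some integer $A$. Restricting to a point, where $\kappa_{-1}^{MU}$ restricts to $[T]=0$, and using \eqref{eq:nSeriesMU} to extract the $[\bP^1]$-coefficients, determines $A$. The formula predicts $A=4$, as in Corollary~\ref{cor:KappaMinusOneCyclic}.

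I expect the main obstacle to be Step 2. Producing the global lift and verifying its restrictions to $C_4$ and $C_6$ means tracking the restrictions of $[12](x)/[d](x)$ to each subgroup modulo the relevant $n$-series. The sign and orientation conventions in Step 1 also have to be kept consistent with the rotation data used here.
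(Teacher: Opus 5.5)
Your architecture is the paper's. Translation invariance gives $c_1^{MU}(T_\pi)=\pi^*x$, and the projection formula then gives $\kappa_i^{MU}=x^{i+1}\kappa_{-1}^{MU}$. The amalgam $C_6*_{C_2}C_4$ makes $\MU^0(B\SL(2,\bZ))\to\MU^0(BC_6)\oplus\MU^0(BC_4)$ injective. Corollary~\ref{cor:FinGpAct} gives the two restrictions of $\kappa_0^{MU}$. Lemma~\ref{lem:AnnOfx} plus restriction to a point fixes the multiple of $[\bP^1]\emptyset_{12}(x)$. But two steps do not stand as written.

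In Step 1, the order-$4$ element cannot force $V\cong a^*L^{\otimes\pm1}$. Restriction $\H^2(B\SL(2,\bZ);\bZ)=\bZ/12\to\H^2(BC_4;\bZ)=\bZ/4$ is reduction mod $4$, so for instance $a^*L^{\otimes 5}$ and $a^*L^{\otimes 9}$ restrict to $C_4$ exactly as $a^*L$ does. Use instead the order-$6$ element $\bigl(\begin{smallmatrix}0&-1\\1&1\end{smallmatrix}\bigr)$. It acts through a faithful character, so $c_1(V)$ is odd and prime to $3$, hence a generator of $\bZ/12$. You then choose the identification $C_{12}\cong\mu_{12}$ defining $L$ so that $V\cong a^*L$; choosing only the orientation of $L$ cannot turn $5$ into $1$. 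This is the content of the paper's $s^*(T_\pi E)\cong(Ba)^*(L)$. It is not cosmetic: $\kappa_0^{MU}$ is invariant under $x\mapsto[u](x)$ for $u$ prime to $12$, but the displayed formula for $\kappa_{-1}^{MU}$ is not. (The rotation numbers you mention play no role in Step 2, since $\kappa_0^{MU}$ in Corollary~\ref{cor:FinGpAct} depends only on orbit types.)

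The more serious gap is Step 2. The theorem \emph{is} a specific element $\rho(x)$, and its proof consists of checking that $\rho(x)\vert_{BC_4}=2-\emptyset_4(x)+\emptyset_2([2](x))$ and $\rho(x)\vert_{BC_6}=1-\emptyset_6(x)+\emptyset_3([2](x))+\emptyset_2([3](x))$. ``Guess a power series'' leaves exactly this undone.

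Your heuristic for the restrictions is also wrong. The class $\emptyset_{12/d}([d](x))=\mathrm{Ind}_{C_d}^{C_{12}}(1)$ never restricts to $0$. By the double coset formula it restricts to $C_k$ as $\tfrac{12\gcd(d,k)}{dk}\,\emptyset_{k/\gcd(d,k)}([\gcd(d,k)](x))$. For example, on $BC_4$ one gets $\emptyset_3([4](x))\mapsto 3$ and $\emptyset_6([2](x))\mapsto 3\,\emptyset_2([2](x))$, and on $BC_6$ one gets $\emptyset_4([3](x))\mapsto 2\,\emptyset_2([3](x))$.

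To evaluate the inverted denominators after restriction you need the identity $\emptyset_k([l](x))^2=k\,\emptyset_k([l](x))$ of Lemma~\ref{lem:powers}. It gives, for example, $2/(\emptyset_3([2](x))-2)=\emptyset_3([2](x))-1$ on $BC_6$ and $3(\emptyset_2([2](x))-2)/(3-\emptyset_2([2](x)))=\emptyset_2([2](x))-2$ on $BC_4$. You also need $\emptyset_n([k](x))=\emptyset_n(x)$ for $k$ prime to $n$ (Lemma~\ref{lem:units}) and $\emptyset_2([3](x))\,\emptyset_3([2](x))=\emptyset_6(x)$ (Lemma~\ref{lem:products}).

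Likewise, in Step 3 you must actually compute that $\rho(x)/x$ restricts to $-48[\bP^1]$ at a point, using $\emptyset_m([d](x))=m-\binom{m}{2}d[\bP^1]x+\cdots$. Together with $\emptyset_{12}(x)\mapsto 12$ and $[T]=0$ this gives $\lambda=4$. Saying ``the formula predicts $A=4$'' reads the answer off the statement rather than deriving it.
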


This formula must be interpreted in $\MU^*[[x]]$, as follows: 
\begin{enumerate}[(i)]
\item
For $k$ dividing $12$ the expression $\tfrac{[12](x)}{[k](x)}$ is to be interpreted as the quotient in $\MU^*[[x]]$, which agrees with $\emptyset_{12/k}([k](x))$.

\item The denominators $\tfrac{[12](x)}{[4](x)}-2 = 1 + (\tfrac{[12](x)}{[4](x)}-3)$ and $3-\tfrac{[12](x)}{[6](x)} = 1+ (2-\tfrac{[12](x)}{[6](x)})$ both lie in $1+(x) \subset \MU^*[[x]]$, and so can be formally inverted. It is these formal inverses to which the formula refers.

\item The long expression in brackets is uniquely divisible by $x$ in $\MU^*[[x]]$.%
\end{enumerate}

We start with some basic identities in the complex cobordism of cyclic groups.

\begin{lem}\label{lem:powers}
Consider $\frac{[kl](x)}{[l](x)} = \emptyset_k([l](x)) \in \MU^0(BC_{kl})$. Then
$$\emptyset_k([l](x))^2 = k \cdot \emptyset_k([l](x)),$$
and so $(\emptyset_k([l](x)) - k)^2 =  - k \cdot (\emptyset_k([l](x)) - k)$. 
\end{lem}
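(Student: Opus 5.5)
We want $\emptyset_k([l](x))^2 = k\cdot\emptyset_k([l](x))$ in $\MU^0(BC_{kl}) = \MU^*[[x]]/([kl](x))$. The plan is to interpret $\emptyset_k([l](x))$ as a transfer and use the projection formula. By Lemma \ref{lem:QuillenTrfLem} with $m = l$, $n = kl$ and $f = 1$, we have
$$\emptyset_k([l](x)) = \trf_p^*(1)$$
for the covering $p : BC_l \to BC_{kl}$ of degree $k$. The projection formula then gives
$$\emptyset_k([l](x))^2 = \trf_p^*(1)\cdot \trf_p^*(1) = \trf_p^*\big(p^*\trf_p^*(1)\big).$$
So it suffices to show that $p^*\trf_p^*(1) = k$ in $\MU^0(BC_l)$; linearity of $\trf_p^*$ then yields $\trf_p^*(k) = k\,\trf_p^*(1)$.

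Since the covering is pulled back, there is a purely algebraic check. Write $y = [l](x)$. Because $p^*(x) = x_l$, we get $p^*(y) = [l](x_l)$, which is $0$ in $\MU^*(BC_l) = \MU^*[[x_l]]/([l](x_l))$. Hence
$$p^*\emptyset_k(y) = \emptyset_k(0) = k,$$
because $\emptyset_k(z) = [k](z)/z = k + (\text{higher order terms in } z)$ as a power series in $\MU^*[[z]]$. Substituting $z = 0$ is legitimate because $p^*$ is a continuous ring map $\MU^*[[x]] \to \MU^*[[x_l]]$ sending $x \mapsto x_l$. This finishes the first identity.

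I also record a direct algebraic proof that avoids transfers. In $\MU^*[[y]]$ we have
$$\emptyset_k(y)\big(\emptyset_k(y) - k\big) = \frac{[k](y)}{y}\cdot\big(\emptyset_k(y)-k\big).$$
Since $\emptyset_k(y) - k$ is divisible by $y$, this product equals $[k](y)\cdot h(y)$ for some power series $h$. Moreover $[k](y) = [k]([l](x)) = [kl](x)$, which is zero in the quotient ring. This version is cleaner and I would present it, with the transfer argument as motivation. The only point needing care is that the substitution $y = [l](x)$ into power series is well defined in $\MU^*[[x]]$, which holds because $[l](x) \in (x)$.

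The second identity is then routine. Expanding gives
$$(\emptyset_k(y) - k)^2 = \emptyset_k(y)^2 - 2k\,\emptyset_k(y) + k^2.$$
Using the first identity, this becomes $-k\,\emptyset_k(y) + k^2 = -k(\emptyset_k(y) - k)$.
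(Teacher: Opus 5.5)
Your direct algebraic argument is the paper's proof. The paper notes that $\emptyset_k([l](x)) \in k + ([l](x))$ and that $[l](x)\cdot\emptyset_k([l](x)) = [k]([l](x)) = [kl](x) = 0$, which is exactly your factorisation $\emptyset_k(y)\bigl(\emptyset_k(y)-k\bigr) = [k](y)\,h(y)$.

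Your transfer and projection-formula argument, via $p^*\trf_p^*(1) = k$, is also correct. It is the $\MU$-theoretic shadow of the Burnside-ring identity $[C_{kl}/C_l]^2 = k\,[C_{kl}/C_l]$, which the paper mentions as motivation after Theorem \ref{thm:elliptic}, but the proof does not need it.
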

\begin{proof}
Note that $\emptyset_k([l](x)) = k + ([l](x))$, and $[l](x) \cdot \emptyset_k([l](x))=0$.
\end{proof}

\begin{lem}\label{lem:units}
In $\MU^0(BC_n)$ we have $\emptyset_n(x) = \emptyset_n([k](x))$ for all $k$ coprime to $n$.
\end{lem}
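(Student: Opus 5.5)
Since $k$ is coprime to $n$, I will first exhibit a geometric reason for the identity and then check that it is compatible with the formula. The cleanest route is Quillen's formula \eqref{eq:QuillenTrf}: $\emptyset_n(x) = \trf_\pi^*(1)$ for the universal cover $\pi : EC_n \to BC_n$, where $x = c_1^{MU}(L_n)$. Choose $k'$ with $kk' \equiv 1 \bmod n$. Let $\phi_k : C_n \to C_n$ be the automorphism $g \mapsto g^k$, with induced map $B\phi_k : BC_n \to BC_n$ (a homotopy equivalence). Since $L_n^{\otimes k}$ is the pullback of $L_n$ along $B\phi_k$, we get $(B\phi_k)^*(x) = [k](x)$ in $\MU^*(BC_n) = \MU^*[[x]]/([n](x))$. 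Therefore $(B\phi_k)^*\emptyset_n(x) = \emptyset_n([k](x))$, where $\emptyset_n$ is applied as a power series. Interpreting the right-hand side requires that a power series composed with $[k](x)$ is well defined modulo $[n](x)$. This holds because $[n]([k](x)) = [k]([n](x))$ is divisible by $[n](x)$, so $(B\phi_k)^*$ is the ring endomorphism of $\MU^*[[x]]/([n](x))$ given by $x \mapsto [k](x)$.

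It then suffices to show $(B\phi_k)^*\trf_\pi^*(1) = \trf_\pi^*(1)$. The universal cover $EC_n \to BC_n$ pulls back along $B\phi_k$ to a cover isomorphic to itself, since $\phi_k$ is an automorphism. Naturality of the Becker--Gottlieb transfer under pullback squares then gives $(B\phi_k)^*\trf_\pi^*(1) = \trf_\pi^*((\text{lift})^*1) = \trf_\pi^*(1)$. Combining this with the first paragraph, $\emptyset_n([k](x)) = \emptyset_n(x)$ in $\MU^0(BC_n)$, as required.

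The only step needing care is that the pullback of $\pi$ along $B\phi_k$ really is (fibre homotopy equivalent to) $\pi$ itself over $BC_n$. This is where coprimality enters. The pullback of $EC_n \to BC_n$ along $B\phi_k$ is the cover associated to the subgroup $\phi_k^{-1}(e) = \ker \phi_k$. This kernel is trivial exactly when $\gcd(k,n)=1$, so the pullback is the universal cover again.

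As an alternative check, I would also verify the identity algebraically. By Lemma \ref{lem:AnnOfx}, $x\cdot(\emptyset_n([k](x)) - \emptyset_n(x))$ equals $\frac{x}{[k](x)}[n]([k](x))$ modulo $[n](x)$. This vanishes because $x/[k](x)$ is a unit when $k$ is coprime to $n$, up to the $\bZ$-invertibility issue. Lemma \ref{lem:AnnOfx} then shows the difference is an integer multiple of $\emptyset_n(x)$, and restricting to a point (where both sides equal $n$) shows that multiple is $0$. The geometric argument avoids the invertibility issue, so I would present that one.
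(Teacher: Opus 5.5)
Your geometric argument is correct, and it takes a genuinely different route from the paper's.

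\textbf{The paper's route.} The paper works purely inside $\MU^*[[x]]/([n](x))$. Writing $Ak+Bn=1$, the formal group law gives $x=[A]([k](x))+_F[B]([n](x))=[A]([k](x))$ in $\MU^*(BC_n)$. So $x$ lies in the ideal generated by $[k](x)$, and therefore annihilates $\emptyset_n([k](x))$. Lemma~\ref{lem:AnnOfx} then makes $\emptyset_n([k](x))$ an integer multiple of $\emptyset_n(x)$, and setting $x=0$ shows the multiple is $1$.

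\textbf{Your route.} You recognise $\emptyset_n([k](x))$ as Quillen's class $\trf^*(1)$ pulled back along $B\phi_k$. You then use naturality of the Becker--Gottlieb transfer, together with the fact that this pullback of the universal cover is again the universal cover precisely because $\phi_k$ is injective.

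\textbf{Comparison.} Your version makes the role of coprimality transparent. It needs neither Lemma~\ref{lem:AnnOfx} nor any division in $\MU^*(BC_n)$. It also shows that the underlying identity already holds in stable cohomotopy, as the Burnside-ring identity $\phi_k^*[C_n/e]=[C_n/e]$; this is exactly the viewpoint the paper says led it to these lemmas. The paper's argument, by contrast, is self-contained algebra in the formal group law.

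\textbf{A step to spell out.} You should say more about why $(B\phi_k)^*$ acts on $\emptyset_n(x)$ by substituting $[k](x)$. The class $\emptyset_n(x)$ is pulled back from $\MU^*(\mathbb{CP}^\infty)=\MU^*[[x]]$. The composite $BC_n\xrightarrow{B\phi_k}BC_n\to\mathbb{CP}^\infty$ classifies $L_n^{\otimes k}$, so up to homotopy it factors through the self-map of $\mathbb{CP}^\infty$ classifying $\gamma^{\otimes k}$. That self-map acts on $\MU^*[[x]]$ by the continuous substitution $x\mapsto[k](x)$.

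\textbf{The closing ``alternative check'' is not sound as written.}
\begin{enumerate}[(1)]
\item The first equality there is plain algebra, not Lemma~\ref{lem:AnnOfx}.
\item The stated reason for the vanishing is wrong. $x/[k](x)$ has constant term $1/k$, so it is not an element of $\MU^*[[x]]$. In $\MU^*[1/k][[x]]$ it is a unit whether or not $\gcd(k,n)=1$, so this cannot be where coprimality enters.
\item For $k=n$ the same manipulation would ``prove'' $x\cdot\emptyset_n([n](x))\equiv nx=0$ in $\MU^*(BC_n)$. That is false, because $\MU^*(BC_n)$ is torsion-free and $x\neq 0$.
\end{enumerate}
The check can be rescued. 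When $\gcd(n,k)=1$, an integral series divisible by $[n](x)=nx+\cdots$ in $\MU^*[1/k][[x]]$ is already divisible in $\MU^*[[x]]$; compare coefficients inductively, using $\bZ[1/k]\cap\tfrac1n\bZ=\bZ$. Alternatively, use the paper's B\'ezout argument. Since you present the geometric proof, none of this affects the correctness of your main argument.
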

\begin{proof}
If $k$ is coprime to $n$, we may write $Ak + Bn = 1$ for integers $A$ and $B$. Hence
$$x = [A]([k](x)) +_F [B]([n](x)) = [A]([k](x))$$
so $x \in ([k](x))$ (and of course $[k](x) \in (x)$ so $(x)=([k](x))$, though we will not need this). Thus $x \cdot \emptyset_n([k](x))=0$, as $\emptyset_n([k](x))$ is annihilated by multiplication by $[k](x)$, and so $\emptyset_n([k](x))$ is a multiple of $\emptyset_n(x)$. As $x$ annihilates $\emptyset_n(x)$ this multiple is just an integer, and setting $x=0$ shows that this integer is 1.
\end{proof}

\begin{lem}\label{lem:products}
If $n=kl$ with $k$ and $l$ coprime, then in $\MU^0(BC_n)$ we have 
$$\emptyset_k([l](x)) \cdot \emptyset_{l}([k](x)) = \emptyset_{n}(x).$$
More generally, if $k$ and $l$ divide $n$ then
$$\emptyset_k([\tfrac{n}{k}](x)) \cdot \emptyset_l([\tfrac{n}{l}](x)) = \gcd(k, l) \cdot \emptyset_{kl/\gcd(k,l)}([\tfrac{n}{kl /\gcd(k,l)}](x)).$$
\end{lem}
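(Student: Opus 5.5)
The plan is to interpret every factor as a Becker--Gottlieb transfer of the unit and then compute the product with the projection formula and a double coset formula for finite covers. First, apply Lemma~\ref{lem:QuillenTrfLem} with $f=1$ to the subgroup $C_{n/k} \leq C_n$ (so $m=n/k$ and $n/m=k$). This gives
$$\mathrm{Ind}_{C_{n/k}}^{C_n}(1) = \emptyset_k([\tfrac{n}{k}](x)) \in \MU^0(BC_n),$$
and similarly for $l$. The first statement is the special case $n=kl$ with $\gcd(k,l)=1$: then $[\tfrac{n}{k}](x)=[l](x)$, $[\tfrac{n}{l}](x)=[k](x)$, and $kl/\gcd(k,l)=n$, so the right-hand side is $\emptyset_n([1](x)) = \emptyset_n(x)$. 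So it suffices to prove the general formula.

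Write $G=C_n$, $H=C_{n/k}$ and $K=C_{n/l}$. By the projection formula for the finite cover $p_H\colon BH\to BG$,
$$\mathrm{Ind}_H^G(1)\cdot \mathrm{Ind}_K^G(1) = \mathrm{Ind}_H^G\big(p_H^*\,\mathrm{Ind}_K^G(1)\big).$$
To compute $p_H^*\mathrm{Ind}_K^G(1)$, I will use that the pullback of the covering $BK \to BG$ along $BH \to BG$ is the covering $EG\times_H (G/K) \to BH$. This decomposes over the $H$-orbits on $G/K$, i.e.\ over the double cosets $H\backslash G/K$. Since $G$ is abelian, each orbit has stabiliser $H\cap K$, and there are $|G|/|HK|$ orbits. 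Naturality of the Becker--Gottlieb transfer under pullback, together with its additivity over disjoint unions, then gives
$$p_H^*\,\mathrm{Ind}_K^G(1) = \tfrac{|G|}{|HK|}\,\mathrm{Ind}_{H\cap K}^H(1).$$
Transitivity of transfers, $\mathrm{Ind}_H^G\circ \mathrm{Ind}_{H\cap K}^H = \mathrm{Ind}_{H\cap K}^G$, then yields
$$\mathrm{Ind}_H^G(1)\cdot \mathrm{Ind}_K^G(1) = \tfrac{|G|}{|HK|}\,\mathrm{Ind}_{H\cap K}^G(1).$$

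It remains to do the arithmetic inside the cyclic group. Here $H\cap K = C_{\gcd(n/k,\,n/l)} = C_{n/\mathrm{lcm}(k,l)}$, and $HK = C_{\mathrm{lcm}(n/k,\,n/l)} = C_{n/\gcd(k,l)}$. Hence $|G|/|HK| = \gcd(k,l)$. Applying Lemma~\ref{lem:QuillenTrfLem} once more with $m = n/\mathrm{lcm}(k,l)$ gives
$$\mathrm{Ind}_{H\cap K}^G(1) = \emptyset_{kl/\gcd(k,l)}([\tfrac{n}{kl/\gcd(k,l)}](x)),$$
since $\mathrm{lcm}(k,l)=kl/\gcd(k,l)$. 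This is exactly the claimed formula.

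The only step needing real care is the double coset formula for Becker--Gottlieb transfers, i.e.\ compatibility of the transfer with pullback of finite coverings and additivity over components. These are standard properties of the transfer, and I would cite them rather than reprove them. As a sanity check (or as an alternative route), one can verify the identity algebraically in $\MU^*[[x]]/([n](x))$ by working in the rationalisation, which suffices by Theorem~\ref{thm:MUCnFlat}. There one uses Lemma~\ref{lem:powers} in the case $k=l$, and otherwise compares coefficients via Lemma~\ref{lem:AnnOfx}. The transfer argument, however, is cleaner.
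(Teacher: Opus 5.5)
Your proof is correct, but it takes a different route from the paper. The paper never identifies the factors with transfers in this proof. In the coprime case it writes $Ak+Bl=1$ and uses the formal group law to get $x=[A]([k](x))+_F[B]([l](x))\in([k](x),[l](x))$. Since $[l](x)$ annihilates $\emptyset_k([l](x))$ and $[k](x)$ annihilates $\emptyset_l([k](x))$ (both products equal $[n](x)=0$), the product is annihilated by $x$. By Lemma~\ref{lem:AnnOfx} it is therefore an integer multiple of $\emptyset_n(x)$, and setting $x=0$ gives $kl=c\cdot n$, so the multiple is $1$. The general case is reduced to this one by pulling back along the quotient $C_n\to C_{kl/\gcd(k,l)}$, where $\tfrac{k}{\gcd(k,l)}$ and $\tfrac{l}{\gcd(k,l)}$ are coprime; the same augmentation argument then yields the factor $\gcd(k,l)$.

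Your argument instead uses Lemma~\ref{lem:QuillenTrfLem} to write each factor as $\mathrm{Ind}_H^G(1)$, then applies the projection formula and the double coset formula. This handles the general case in one step, with no reduction to the coprime case. It also needs no annihilator input, so neither Lemma~\ref{lem:AnnOfx} nor the completeness behind it is used. It proves more: the transfer identity holds in any multiplicative theory, in particular in stable cohomotopy. There it is the image of the Burnside ring identity $[C_n/H]\cdot[C_n/K]=\gcd(k,l)\,[C_n/(H\cap K)]$, which is exactly the heuristic the paper describes in its remark on how $\rho(x)$ was found.

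The paper's argument, by contrast, is purely formal-group-theoretic. It needs none of the transfer properties you invoke (base change, additivity over components, transitivity), only the ring $\MU^*[[x]]/([n](x))$ and the annihilator lemma. The vague algebraic ``sanity check'' in your last paragraph is unnecessary. If you want an algebraic proof, the B\'ezout trick above is the efficient one.
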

\begin{proof}
In the first case as $k$ is coprime to $l$, we may write $Ak + Bl = 1$ for integers $A$ and $B$. Hence
$$x = [A]([k](x)) +_F [B]([l](x)) \in ([k](x), [l](x)).$$
As $\emptyset_k([l](x)) \cdot \emptyset_{l}([k](x))$ is annihilated by multiplication by $[k](x)$ or $[l](x)$, it follows that it is also annihilated by $x$, so is a multiple of $\emptyset_n(x)$. As in the previous lemma, this multiple is 1.

In the second case consider the quotient map $\phi : C_n \to C_{kl /\gcd(k,l)}$. If $z \in \MU^2(BC_{kl /\gcd(k,l)})$ denotes the canonical generator then $\phi^*(z) = [\tfrac{n\cdot \gcd(k,l)}{kl}](x)$ and so
$$[\tfrac{n}{k}](x) = \phi^*([\tfrac{l}{\gcd(k,l)}](z)) \quad\text{and}\quad [\tfrac{n}{l}](x) = \phi^*([\tfrac{k}{\gcd(k,l)}](z)).$$
Thus the identity we wish to establish is
$$\phi^*\left(\emptyset_k([\tfrac{l}{\gcd(k,l)}](z)) \cdot \emptyset_l([\tfrac{k}{\gcd(k,l)}](z))\right) = \phi^*\left(\gcd(k,l) \cdot \emptyset_{kl/\gcd(k,l)}(z)\right),$$
so we may as well show that it holds before pulling back via $\phi$.

As $\gcd(\tfrac{k}{\gcd(k,l)}, \tfrac{l}{\gcd(k,l)})=1$ we have $z \in ([\tfrac{k}{\gcd(k,l)}](z), [\tfrac{l}{\gcd(k,l)}](z))$ and so $z$ annihilates the product $\emptyset_k([\tfrac{l}{\gcd(k,l)}](z)) \cdot \emptyset_l([\tfrac{k}{\gcd(k,l)}](z))$. As above, it follows that the product is a multiple of $\emptyset_{kl/\gcd(k,l)}(z)$, and we can determine which multiple by setting $x$ to zero: the multiple is $\gcd(k,l)$.
\end{proof}

\begin{rem}
Using these lemmas, the formula of Theorem \ref{thm:elliptic} may be expanded in $\MU^*(B\SL(2,\bZ)) \otimes \bZ[\tfrac{1}{6}]$ to
$$\kappa_{-1}^{MU} = \frac{1}{x} \left( 1 + \frac{1}{3} \frac{[12](x)}{[4](x)} + \frac{[12](x)}{[3](x)} - \frac{1}{3} \frac{[12](x)}{[2](x)} - \frac{1}{2} \frac{[12](x)}{x} \right)  + 4[\bP^1]\frac{[12](x)}{x}.$$
As $\MU^*(B\SL(2,\bZ))$ is torsion-free (Theorem \ref{thm:MUCnFlat}) this formula entails no loss of information.
\end{rem}

\begin{proof}[Proof of Theorem \ref{thm:elliptic}]
Taking the differential of the map which translates (using the group structure on the torus $T$) to the origin gives a map on tangent bundles $\phi : T(T) \to T_0 T$, and if $A \in \SL(2,\bZ)$ then this satisfies
$$\phi \circ D A = D_0 A \circ \phi : T(T) \lra T_0 T,$$
so is equivariant with respect to the $\SL(2,\bZ)$-action on either side. On the bundle $\pi$, this translates into an isomorphism of bundles
$$T_\pi E \cong \pi^*(s^*(T_\pi E)),$$
and as $s^*(T_\pi E)$ is isomorphic to $(Ba)^*(L)$ we have $c_1^{MU}(T_\pi E) = \pi^*(x)$. Thus
$$\kappa_i^{MU} = \pi^{MU}_!(c_1^{MU}(T_\pi E)^{i+1}) = \pi_!^{MU}(\pi^*(x^{i+1})) = x^{i+1} \cdot \pi_!^{MU}(1).$$
This establishes the first claim.

To compute the class $\kappa_{-1}^{MU} \in \MU^*[[x]]/([12](x))$, note that by Lemma \ref{lem:AnnOfx} the class $\kappa_0^{MU} = x \cdot \kappa_{-1}^{MU}$ determines $\kappa_{-1}^{MU}$ up to addition of a $\bZ$-multiple of $[\bP^1] \cdot \emptyset_{12}(x)$. So we will first determine $\kappa_0^{MU}$, and then find the required multiple of $[\bP^1] \cdot \emptyset_{12}(x)$. The same principle was used in Corollary \ref{cor:KappaMinusOneCyclic}, and in fact the argument is highly similar to the one given there.

The amalgamated free product decomposition $\SL(2, \bZ) = C_6 *_{C_2} C_4$ gives a Mayer--Vietoris sequence in $\MU$-theory, but as $\MU^*(BC_n)$ is concentrated in even degrees this splits into short exact sequences. In particular, the square
\begin{equation*}
\begin{tikzcd}
\MU^0(B\SL(2,\bZ)) \rar \dar & \MU^0(BC_4) \dar\\
\MU^0(BC_6) \rar & \MU^0(BC_2)
\end{tikzcd}
\end{equation*}
is cartesian. Define an element
$$\rho(x) := 1-\emptyset_4([3](x)) \cdot \left(\frac{1}{\emptyset_3([4](x))-2}\right)  + \emptyset_3([4](x)) + \left(\frac{\emptyset_6([2](x))-2\emptyset_3([4](x))}{3-\emptyset_2([6](x))}\right)$$
in $\MU^0(B\SL(2,\bZ))$. We shall show that $\kappa_0^{MU} = \rho(x)$, by showing that this is the case when restricted to $BC_6$ and $BC_4$. Supposing we have done so, note that $\rho(x)$ is divisible by $x$ in $\MU^*[[x]]$ with quotient $\tfrac{\rho(x)}{x}$. Then $x \cdot (\kappa_{-1}^{MU} - \tfrac{\rho(x)}{x}) = \kappa_0^{MU} - \rho(x) =0$, so
$$\kappa_{-1}^{MU} = \tfrac{\rho(x)}{x} + \lambda  [\bP^1] \cdot \emptyset_{12}(x)$$
for some $\lambda \in \bZ$. Finally, restricting to a point $* \in B\SL(2,\bZ)$ has the effect of setting $x$ to zero so, using that $\emptyset_n(x) = n - \binom{n}{2}[\bP^1]x + \cdots$ from \eqref{eq:nSeriesMU}, after a tedious calculation we find
$$\MU^{-2} \ni  [T] = -48 \cdot [\bP^1] + \lambda \cdot 12 \cdot [\bP^1],$$
and as the torus $T$ is nullbordant as a stably almost complex manifold (it has Chern number 0) it follows that $\lambda=4$.

We may use Corollary \ref{cor:FinGpAct} to compute $\kappa_0^{MU}$ restricted to $C_6$ and $C_4$. The $C_6$-action on the torus has one orbit of size 1, one orbit of size 3, and one orbit of size 2; the $C_4$-action on the torus has two orbits of size 1, and one orbit of size 2, giving:
\begin{align*}
\kappa_0^{MU}(C_6) &= 1 - \emptyset_6(x) + \emptyset_3([2](x)) + \emptyset_2([3](x))\\
\kappa_0^{MU}(C_4) &= 2 - \emptyset_4(x) + \emptyset_2([2](x)).
\end{align*}

\vspace{1ex}

\noindent \textbf{$\rho(x)$ restricted to $BC_4$.} Here $[4](x)=0$, so $[6](x)=[2](x)$ and $[3](x) = [-1](x)$ too, and we obtain
$$\rho(x)\vert_{C_4} = 4-\emptyset_4([-1](x))  + \left(\frac{\emptyset_6([2](x))-6}{3-\emptyset_2([2](x))}\right).$$
By Lemma \ref{lem:units} we have $\emptyset_4([-1](x)) = \emptyset_4(x)$. As elements of $\MU^*[[x]]$ we have
$$[2](x) \cdot \emptyset_6([2](x)) = [12](x) = [4](x) +_F [4](x) +_F [4](x) = 3 \cdot [4](x) + [4](x)^2 \cdot X$$
for some $X \in \MU^*[[x]]$, and so
$$\emptyset_6([2](x)) = 3 \cdot \frac{[4](x)}{[2](x)} + [4](x) \cdot \tfrac{[4](x)}{[2](x)} \cdot X.$$
Thus on $BC_4$ we have $\emptyset_6([2](x)) = 3 \cdot \emptyset_2([2](x))$, so
$$\rho(x)\vert_{C_4} = 4-\emptyset_4(x)  + 3 \cdot \left(\frac{\emptyset_2([2](x))-2}{1 -(\emptyset_2([2](x))-2)}\right).$$
Lemma \ref{lem:powers} shows that
$$3 \cdot \frac{\emptyset_2([2](x))-2}{1 -(\emptyset_2([2](x))-2)} = \emptyset_2([2](x))-2$$
so we obtain
$$\rho(x)\vert_{C_4} = 2-\emptyset_4(x)+\emptyset_2([2](x))$$
as required.

\vspace{1ex}

\noindent \textbf{$\rho(x)$ restricted to $BC_6$.} By an argument similar to the last part, here $\emptyset_6([2](x))$ becomes $2\cdot \emptyset_3([2](x))$ and $\emptyset_4([3](x))$ becomes $2 \cdot \emptyset_2([3](x))$. By Lemma \ref{lem:units} (pulled back along $BC_6 \to BC_3$) we have $\emptyset_3([4](x)) = \emptyset_3([2](x))$, so the last term vanishes. We have
$$\rho(x)\vert_{C_6} = 1-\emptyset_2([3](x)) \cdot \left(\frac{2}{\emptyset_3([2](x))-2}\right)  + \emptyset_3([2](x)).$$
Lemma \ref{lem:powers} shows that
$$\frac{2}{(\emptyset_3([2](x))-3)+1} = \emptyset_3([2](x))-1$$
so we obtain
$$\rho(x)\vert_{C_6} = 1-\emptyset_2([3](x)) \cdot \emptyset_3([2](x)) + \emptyset_2([3](x)) + \emptyset_3([2](x)).$$
and by Lemma \ref{lem:products} this is $1-\emptyset_6(x) + \emptyset_2([3](x)) + \emptyset_3([2](x))$, as required.
\end{proof}

\begin{rem}
The reader may wonder how the formula $\rho(x)$ was guessed. The reason for working with $\kappa_0^{MU}$, and the idea for the formulas of Lemmas \ref{lem:powers} -- \ref{lem:products}, is that $\kappa_0^{MU}(\pi) = \trf_\pi^*(1)$ is obtained by the Becker--Gottlieb transfer applied to the unit, and so has a canonical lift $\kappa_0^S(\pi)$ to stable cohomotopy.

When $B=BG$, for an action of a finite group $G$ on a surface $\Sigma$, we have the formula \eqref{eq:Burnside}
 for $\kappa_0^S(\pi) \in \pi^0_S(BG) = A(G)^{\wedge}_I$. Although $\SL(2,\bZ)$ is not a finite group, the map $Ba : B\SL(2,\bZ) \to BC_{12}$ is a stable homotopy equivalence so induces an isomorphism in stable cohomotopy. So to obtain a formula for $\kappa_0^{MU}(SL(2, \bZ))$ it is enough to produce an element of $A(C_{12})^{\wedge}_I$ restricting to both
\begin{align*}
[C_6 / C_6] - [C_6] + [C_6/C_2] + [C_6 / C_3] &\in A(C_6)^{\wedge}_I\\
2[C_4 / C_4] - [C_4] + [C_4/C_2] &\in A(C_4)^{\wedge}_I.
\end{align*}
This is conceptually rather easier, in part because Lemmas \ref{lem:powers} -- \ref{lem:products} become obvious identities between $C_n$-sets in the Burnside ring, and some experimentation leads to the formula $\rho(x)$. However, although these two elements actually lie in the Burnside ring itself, before completion, it is easy to see that there is no element of $A(C_{12})$ reducing to both if them: it is essential that we complete $A(C_{12})$.
\end{rem}

\section{A genus 2 example: $\bQ \otimes R\K^0(\mathcal{M}_2)/\sqrt{0}$}

In order to determine $\bQ \otimes R\K^0(\mathcal{M}_2)/\sqrt{0}$ it follows from Theorem \ref{thm:ModNil} that we must determine those expressions in the $\kappa_i^K$ which vanish on all abelian subgroups of $\Gamma_2$ of prime power order; by the Atiyah(-Segal) completion theorem \cite{AtiyahBG} and Artin's theorem, it suffices to check vanishing only on \emph{cyclic} groups of prime power order. Furthermore it suffices to check vanishing only on the maximal such subgroups. According to Broughton's classification, these are $C_2$ (given by a \emph{non}-hyperelliptic involution, item (2.b) in Broughton's list), $C_3$, $C_5$, and $C_8$. Evaluating the $K$-theory Mumford--Morita--Miller classes here gives a homomorphism
\begin{equation}\label{eq:KM2Map}
\bZ[\kappa^K_{-1}, \kappa^K_0, \kappa^K_1,\ldots] \lra R(C_2) \times_\bZ R(C_3) \times_\bZ R(C_5) \times_\bZ R(C_8),
\end{equation}
and so a continuous homomorphism 
\begin{equation*}%
\bZ[[\kappa^K_{-1}+1, \kappa^K_0+2, \kappa^K_1,\ldots]] \lra \K^0(\mathcal{M}_2)\lra R(C_2)^\wedge_I \times_\bZ R(C_3)^\wedge_I \times_\bZ R(C_5)^\wedge_I \times_\bZ R(C_8)^\wedge_I.
\end{equation*}
Thus $\bQ \otimes R\K^0(\mathcal{M}_2)/\sqrt{0}$ is the rationalisation of the closure of the image of the map \eqref{eq:KM2Map} (which is a finitely-generated free abelian subgroup), so it suffices to determine the kernel of this map.

\vspace{1ex}

\noindent$\mathbf{C_2.}$ This is generated by the \emph{non}-hyperelliptic involution, having 2 fixed points. It therefore has
\begin{align*}
\kappa_{-1}^K &= -1 \emptyset_2(x_2) + 2 \frac{1 - \tfrac{1}{2} \emptyset_2(x_2)}{x_2}\\
 &= x_2-1\\
\kappa_0^K &= \frac{-2-2}{2} \emptyset_2(x_2) + 2 = 2x_2-2\\
\kappa_i^K &=  2 x_2^i \quad i \geq 1.
\end{align*}

\vspace{1ex}

\noindent$\mathbf{C_3.}$
This subgroup has fixed point data $(z_1, z_2, z_3, z_4 ; 3,3,3,3 ; 1,1,2,2)$, and so has
\begin{align*}
\kappa_{-1}^K &= -1 \emptyset_3(x_3) + 2\frac{1 - \tfrac{1}{3} \emptyset_3(x_3)}{x_3} + 2 \frac{\emptyset_2([2](x_3)) - \tfrac{2}{3} \emptyset_3(x_3)}{x_3}\\
&= -x_3^2 + 3x_3-1\\
\kappa_0^K &= \frac{-2-4}{3} \emptyset_3(x_3) + 4 = -2 x_3^2 + 6x_3 - 2\\
\kappa_i^K &= 2 x_3^i + 2[2](x_3)^i \quad i \geq 1.
\end{align*}
These are all invariant under the transformation $x \mapsto [2](x)$, so lie in the rank 2 submodule generated by the unit and $x+[2](x) = 3x_3-x_3^2$. (This is because the normaliser of $C_3 \leq \Gamma_2$ contains an element acting on this subgroup by squaring ($=$ inversion), but it is also easily observed to be the case from the formulas.)

\vspace{1ex}

\noindent$\mathbf{C_5.}$
This subgroup has fixed point data $(z_1, z_2, z_3 ; 5,5,5 ; 1,1,3)$, and so has
\begin{align*}
\kappa_{-1}^K &= -1 \emptyset_5(x_5) + 2\frac{1 - \tfrac{1}{5} \emptyset_5(x_5)}{x_5} +  \frac{\emptyset_3([2](x_5)) - \tfrac{3}{5} \emptyset_5(x_5)}{x_5}\\
 &= -x_5^4 + 5x_5^3 - 9x_5^2 + 7x_5 - 1\\
\kappa_0^K &= \frac{-2-3}{5} \emptyset_5(x_5) + 3 = -x_5^4 + 5x_5^3 - 10x_5^2 + 10x_5 - 2\\
\kappa_i^K &= 2 x_5^i + [2](x_5)^i \quad i \geq 1.
\end{align*}

\vspace{1ex}

\noindent$\mathbf{C_8.}$
This subgroup has fixed point data $(z_1, z_2, z_3 ; 2,8,8 ; 1,3,1)$, and so has
\begin{align*}
\kappa_{-1}^K &= -1 \emptyset_8(x_8) + \emptyset_4([2](x_8))\frac{1 - \tfrac{1}{2} \emptyset_2(x_8)}{x_8} +  \frac{\emptyset_3([3](x_8)) - \tfrac{3}{8} \emptyset_8(x_8)}{x_8} + \frac{1-\tfrac{1}{8} \emptyset_8(x_8)}{x_8}\\
&= x_8^7 - 7x_8^6 + 22x_8^5 - 40x_8^4 + 45x_8^3 - 31x_8^2 + 12x_8 - 1 \\
\kappa_0^K &= \frac{-2-6}{8} \emptyset_8(x_8) + \emptyset_4([2](x_8)) + 2\\
& = x_8^7 - 7x_8^6 + 22x_8^5 - 40x_8^4 + 46x_8^3 - 34x_8^2 + 16x_8 - 2\\
\kappa_i^K &= \emptyset_4([2](x_8)) \cdot x_8^i +  [3](x_8)^i + x_8^i \quad i \geq 1.
\end{align*}
These are all invariant under the transformation $x \mapsto [3](x)$, so lie in the rank 5 submodule generated by the unit and $x+[3](x), (x+[3](x))^2, (x+[3](x))^3, (x+[3](x))^4$.  (Again, this is explained by the fact that the normaliser of $C_8 \leq \Gamma_2$ contains an element acting on this group by cubing.)

\vspace{1ex}

We complete the calculation as follows. The target of \eqref{eq:KM2Map} is a free abelian group of rank $1+(2-1)+(3-1)+(5-1)+(8-1) = 15$, but the symmetry considerations for $C_3$ and $C_8$ show that the image of \eqref{eq:KM2Map} lies in a subgroup of rank $1+(2-1) + (3-2) + (5-1) + (8-4) = 11$. It is elementary to check (by computer) that the images under the map \eqref{eq:KM2Map} of the 11 elements
\begin{equation}\label{eq:RationallySpanningSet}
1, \kappa^K_{-1}+1, (\kappa^K_{-1}+1)^2, \ldots, (\kappa^K_{-1}+1)^9, \kappa^K_1
\end{equation}
are linearly independent. The ring structure is determined by the relations
\begin{align*}
0 =&(\kappa^K_{-1}+1)^{10} - 23(\kappa^K_{-1}+1)^9 + 238(\kappa^K_{-1}+1)^8 - 1457(\kappa^K_{-1}+1)^7\\
& + 5822(\kappa^K_{-1}+1)^6 - 15759(\kappa^K_{-1}+1)^5 + 28910(\kappa^K_{-1}+1)^4\\
& - 34670(\kappa^K_{-1}+1)^3 + 24660(\kappa^K_{-1}+1)^2 - 7920(\kappa^K_{-1}+1),\\
99(\kappa^K_1)^2 =& 594\kappa^K_1 -191118(\kappa^K_{-1}+1) + 525769(\kappa^K_{-1}+1)^2 - 647446(\kappa^K_{-1}+1)^3\\
& + 464979(\kappa^K_{-1}+1)^4 - 212833(\kappa^K_{-1}+1)^5 + 63526(\kappa^K_{-1}+1)^6\\
& - 12062(\kappa^K_{-1}+1)^7 + 1330(\kappa^K_{-1}+1)^8 - 65(\kappa^K_{-1}+1)^9,\\
99 \kappa_1^K (\kappa_{-1}^K+1) =& 198 \kappa_1^K  -7266(\kappa^K_{-1}+1) + 15109(\kappa^K_{-1}+1)^2 - 14926(\kappa^K_{-1}+1)^3\\
& + 9318(\kappa^K_{-1}+1)^4 - 4042(\kappa^K_{-1}+1)^5 + 1246(\kappa^K_{-1}+1)^6 \\
&- 263(\kappa^K_{-1}+1)^7 + 34(\kappa^K_{-1}+1)^8 - 2(\kappa^K_{-1}+1)^9.
\end{align*}
(The factors 99 indicate that the span of the images of the 11 elements \eqref{eq:RationallySpanningSet} is not quite a subring, but as we will shortly rationalise this will not matter). We therefore obtain
$$\bQ \otimes R\K^0(\mathcal{M}_2)/\sqrt{0} = \bQ \otimes \bZ[[\kappa_{-1}^K+1, \kappa_1^K]]/(\text{these three relations}).$$

A further consequence, because the symmetry considerations come from normalisers of $C_3$ and $C_8$ in $\Gamma_2$ so $\K^0(\mathcal{M}_2) \to R(C_2)^\wedge_I \times_\bZ R(C_3)^\wedge_I \times_\bZ R(C_5)^\wedge_I \times_\bZ R(C_8)^\wedge_I$ lands in the invariant part, is that rationally and modulo nilpotents all of $\K^0(\mathcal{M}_2)$ is tautological:
$$\bQ \otimes R\K^0(\mathcal{M}_2)/\sqrt{0} = \bQ \otimes \K^0(\mathcal{M}_2)/\sqrt{0}.$$

\begin{rem}
It would be interesting to compare this with the calculation by Edidin--Hu \cite{EdidinHu} of the algebraic $K$-theory of the algebraic stack $\mathcal{M}_2$.
\end{rem}

\section{$\MU_\bQ$-theory}\label{sec:MUQ}

We return to the identification $R\MU^*_\bQ(\mathcal{M}_g) \cong R\H^*(\mathcal{M}_g;\bQ) \otimes_\bZ \MU^*$ from \eqref{eq:MURatTaut}, induced by the Chern--Dold character. Our original interest in Mumford--Morita--Miller classes and tautological rings in generalised cohomology theories, and in particular complex cobordism, arose from the hope that the complicated descriptions of the tautological rings $R\H^*(\mathcal{M}_g;\bQ)$ might have a more transparent geometric meaning in complex cobordism. Ultimately, this does not seem to be the case. But here we record a detailed example which shows that at least some interesting features occur.

Consider $\mathcal{M}_6$. Faber \cite{Faber} has calculated $R\H^*(\mathcal{M}_6;\bQ)$ completely, and it is generated by $\kappa_1$ and $\kappa_2$ subject to the relations
\begin{align*}
127\kappa_1^3 + 2304 \kappa_1 \kappa_2 &=0\\
113 \kappa_1^4 - 36864 \kappa_2^2 &=0.
\end{align*}
The classes $\kappa_3$ and $\kappa_4$ are nonzero and are described by
\begin{align*}
\kappa_3 &= \tfrac{5}{2304} \kappa_1^3\\
\kappa_4 &= -\tfrac{5}{73728} \kappa_1^4;
\end{align*}
the higher $\kappa_i$ vanish. (These expressions differ by some signs from Faber's, as we have defined Mumford--Morita--Miller classes using the Chern class of the vertical tangent bundle, whereas in algebraic geometry the cotangent bundle is used.) 

Using \eqref{eq:ChOnKappas} we find
\begin{align*}
\ch(\kappa_1^{MU}) &= \kappa_1 - \tfrac{1}{2}[\bP^1]\kappa_2 + (\tfrac{1}{2} [\bP^1]^2 - \tfrac{1}{3}[\bP^2])\kappa_3 + (-\tfrac{5}{8}[\bP^1]^3 + \tfrac{5}{6}[\bP^1][\bP^2] - \tfrac{1}{4}[\bP^3])\kappa_4\\
\ch(\kappa_2^{MU}) &= \kappa_2 - [\bP^1] \kappa_3 + (\tfrac{5}{4}[\bP^1]^2 - \tfrac{2}{3}[\bP^2])\kappa_4,
\end{align*}
and using these and the above relations we see that $R\MU^*_\bQ(\mathcal{M}_6)$ is generated as a $\MU^*_\bQ$-algebra by $\kappa_1^{MU}$ and $\kappa_2^{MU}$ subject to the relations
\begin{align}
260352\KMU_1 \KMU_2+14351(\KMU_1)^3-72600[\P^1](\KMU_2)^2 &=0 \tag{$R1$}\\
2032(\KMU_2)^2+113\KMU_2(\KMU_1)^2&=0 \tag{$R2$}\\
(\KMU_2)^3 &=0 \tag{$R3$}
\end{align}
There are also relations
\begin{align*}
(\KMU_{1})^3+32\KMU_{1}\KMU_{2}+352\KMU_{3} &=0 \tag{$D_3$}\\
5 (\KMU_{2})^2+226 \KMU_{4} &=0 \tag{$D_4$}\\
\KMU_{i} &=0 \text{ for } i \geq 5 \tag{$D_i$}
\end{align*}
expressing the remaining $\KMU_i$ in terms of $\kappa_1^{MU}$ and $\kappa_2^{MU}$.%

\begin{rem}
We have
\begin{align*}
&-113\kappa_1^{MU} \kappa_2^{MU}(R1) + ( 14351(\kappa_1^{MU})^2 + 2288\kappa_2^{MU} )(R2) \\
&\quad\quad =  4649216(\kappa_2^{MU})^3 + 8203800[\bP^1]\kappa_1^{MU}(\kappa_2^{MU})^3
\end{align*}
so modulo ($R1$) and ($R2$) we have that $(\kappa_2^{MU})^3$ is a nonzero rational multiple of $[\bP^1]\kappa_1^{MU}(\kappa_2^{MU})^3$ so that when passing to ordinary cohomology, which has the effect of setting $[\P^1]=0$, ($R3$) becomes redundant. However, ($R3$) is not redundant as a relation in $\MU_\bQ^*[\KMU_{1}, \KMU_{2},\ldots]$. On the other hand, the above shows that
$$(\kappa_2^{MU})^3(4649216 + 8203800[\bP^1]\kappa_1^{MU}) \text{ lies in the ideal generated by ($R1$) and ($R2$)}$$
and $[\bP^1]\kappa_1^{MU} \in (\KMU_{1}, \KMU_{2},\ldots)$ so $4649216 + 8203800[\bP^1]\kappa_1^{MU}$ is a unit in the complete ring $\MU_\bQ^*[[\KMU_{1}, \KMU_{2},\ldots]]$, so the relation ($R3$) is redundant here.

We also have
$$127\kappa_1^{MU} (R2) - \kappa_2^{MU}(R1)  =  72600[\bP^1](\kappa_2^{MU})^3    -2288 \kappa_1^{MU}(\kappa_2^{MU})^2,$$
so ($R3$) may be replaced by 
\begin{align*}
\kappa_1^{MU}(\kappa_2^{MU})^2 &=0. \tag{R3$^\prime$}
\end{align*}
\end{rem}

\begin{rem}
An advantage of the $\MU_\bQ$ tautological ring is that it carries an action of the Landweber--Novikov operations, so the ideal of tautological relations must be preserved by these operations. The formulas in Lemma \ref{lem:LNOps} can be used to evaluate these. The operation $s_1$ is a derivation, and is given by $s_1(\kappa_n^{MU}) = n \cdot \kappa_{n+1}^{MU}$ for $n \geq 0$, and by $s_1([\bP^1])=-2$. For example, applying $s_1$ to the relation $(D_3)$  and then using $(D_3)$ again to eliminate $\KMU_{3}$, gives a relation expressing $\KMU_{4}$ in terms of $\kappa_1^{MU}$ and $\kappa_2^{MU}$. Further applications of $s_1$ express $\kappa_n^{MU}$ for $n \geq 3$ in terms of $\kappa_1^{MU}$ and $\kappa_2^{MU}$.
 
In fact, the ideal of tautological relations in the ring $\MU_\bQ^*[\KMU_{1}, \KMU_{2},\ldots]$ is generated by $(R1)$, $(D_3)$, and the operation $s_1$ iteratedly applied to these. We omit this verification.
\end{rem}

Finally, we may express the fundamental tautological class in terms of these generators as
\begin{align*}
\KMU_{-1} &= -5[\P^{1}] +\left(-\tfrac{1}{4}[\P^{1}]^2+\tfrac{1}{3}[\P^{2}]\right)\KMU_{1}\\
&\quad +\left(\tfrac{1}{8}[\P^{1}]^3-\tfrac{1}{3}[\P^{2}][\P^{1}]+\tfrac{1}{4}[\P^{3}]\right)\KMU_{2}\\
&\quad+\left(\tfrac{1}{176}[\P^{1}]^4+\tfrac{1}{99}[\P^{2}]^2+\tfrac{1}{44}[\P^{3}][\P^{1}]-\tfrac{1}{44}[\P^{2}][\P^{1}]^2-\tfrac{1}{55}[\P^{4}]\right)\KMU_{2}\KMU_{1}\\
&\quad+\left(\tfrac{1}{3168}[\P^{2}]^2-\tfrac{1}{1408}[\P^{2}][\P^{1}]^2+\tfrac{1}{5632}[\P^{1}]^4-\tfrac{1}{1760}[\P^{4}]+\tfrac{1}{1408}[\P^{3}][\P^{1}]\right)(\KMU_{1})^3\\
&\quad+\big(\tfrac{1}{226}[\P^{4}][\P^{1}]-\tfrac{5}{7232}[\P^{1}]^5+\tfrac{5}{1356}([\P^{2}][\P^{1}]^3+[\P^{2}][\P^{3}]-[\P^{5}]-[\P^{1}][\P^{2}]^2)\\
&\quad\quad-\tfrac{15}{3616}[\P^{3}][\P^{1}]^2\big)(\KMU_{2})^2.
\end{align*}
It is not clear to me what to make of such expressions.

\section{Morita's method does not give relations in $\MU$-theory}

The relations of Faber in $R\H^*(\mathcal{M}_6;\bQ)$ used in the previous section can also be obtained by a method introduced by Morita \cite{MoritaJacobianI}, which was further developed in \cite{R-WRelations} and then \cite{CGJZ}. This method can be directly implemented in $R\MU^*_\bQ(\mathcal{M}_g)$ instead, where we assume it reproduces the relations of the previous section.

Our goal here is to explain why this method does not seem to yield relations in $\MU^*(\mathcal{M}_g) \otimes \bQ$. To do so, we explain Morita's idea. Consider the torus bundle
$$p : J_{2g} =(\bR^{2g} / \bZ^{2g}) \times_{\Sp(2g, \bZ)} E\Sp(2g, \bZ) \lra B\Sp(2g, \bZ),$$
which is equipped with a canonical section $s : B\Sp(2g, \bZ) \to J_{2g}$. Morita shows that there is a unique line bundle $V \to J_{2g}$ characterised by the following properties of its Chern class $\Omega := c_1(V)$,
\begin{enumerate}[(i)]
\item $s^*(\Omega)$ is trivial,

\item $\Omega$ restricted to a fibre $\bR^{2g} / \bZ^{2g}$ is twice the class of the symplectic form.
\end{enumerate}
He then shows that the class $\Omega$ has the property that $\Omega^{g+1}$ is torsion (and explicitly describes an integer which annihilates it). Unfortunately the direct analogue of this cannot be used to obtain relations in $\MU^*(\mathcal{M}_g)\otimes \bQ$, by the following theorem.

\begin{thm}
No power of $\Omega^{MU} := c_1^{MU}(V) \in \MU^2(J_{2g})$ is torsion.
\end{thm}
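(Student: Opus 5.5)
The plan is to detect non-torsion by pulling $\Omega^{MU}$ back to the classifying space of a finite cyclic group, where the $\MU$-theory is known and torsion-free. Rationalisation and the Chern--Dold character will not work. $J_{2g}$ is not a finite complex, so $\MU^*(J_{2g})\otimes\bQ$ is not $\H^*(J_{2g};\MU^*\otimes\bQ)$. Moreover $\Omega^{g+1}$ is rationally zero in ordinary cohomology, so any argument that factors through rational ordinary cohomology is doomed.

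Instead, let $C_n \leq \Sp(2g,\bZ)$ be a finite cyclic subgroup and $q \in \bR^{2g}/\bZ^{2g}$ a point fixed by $C_n$. Then $q$ gives a map $j_q : BC_n = EC_n \times_{C_n}\{q\} \to J_{2g}$. The pullback $j_q^*V$ is the line bundle on $BC_n$ given by the character $\chi_q$ through which $C_n$ acts on the fibre $V_q$. If $\chi_q = L_n^{\otimes m}$, then
$$j_q^*(\Omega^{MU})^k = [m](x_n)^k \in \MU^*(BC_n) = \MU^*[[x_n]]/([n](x_n)).$$
By Theorem \ref{thm:MUCnFlat} this group is $\bZ$-torsion-free. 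So it suffices to show $[m](x_n)^k \neq 0$ for all $k$, and then $(\Omega^{MU})^k$ cannot be torsion. Nonvanishing is detected by the cycle map to $\H^*(BC_n;\bZ)=\bZ[x]/(nx)$, where $[m](x_n)^k \mapsto m^k x^k$. This is nonzero as soon as $n \nmid m^k$; for $n$ a prime power $p^a$ it suffices that $p \nmid m$, i.e.\ that $\chi_q$ is a faithful (or at least nontrivial of order prime to nothing dividing out) character. So the whole proof reduces to one task: find a finite cyclic $C_n \leq \Sp(2g,\bZ)$ and a fixed point $q$ at which $C_n$ acts on $V_q$ through a character of order a power of $p$ not killed by powers.

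To find such data, reduce to $g=1$. Use a block-diagonal embedding $\SL(2,\bZ) \times \Sp(2g-2,\bZ) \leq \Sp(2g,\bZ)$, with $C_n$ acting through the first factor only, and take $q$ of the form $(q_1,0)$. By the uniqueness in Morita's characterisation, $V$ restricted to this subfamily is the external tensor product of the corresponding genus-$1$ and genus-$(g-1)$ bundles. Its character at $(q_1,0)$ is therefore the genus-$1$ character at $q_1$.

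For $g=1$, take $C_4 = \langle J\rangle$ with $J$ rotation by $90^\circ$, acting on $T=\bR^2/\bZ^2$. Its fixed points are $0$ and $(\tfrac12,\tfrac12)$, and there is one free-on-$C_4/C_2$ orbit $\{(\tfrac12,0),(0,\tfrac12)\}$ with isotropy $C_2$. By (i) the character at $0$ is trivial. To compute the character at $(\tfrac12,\tfrac12)$, apply the localisation formula of Theorem \ref{thm:LocalisationTheorem} in ordinary (or $K$-) theory to $y=c_1(V)$ on the $C_4$-equivariant torus. The left-hand side $\pi_!(y)$ restricts at a point to $\deg V = 2$, by property (ii). The right-hand side expresses this via the fixed-point weights of $T$ and of $V$ at $0$, $(\tfrac12,\tfrac12)$ and the $C_2$-orbit. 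Equivalently, use the equivariant Riemann--Roch/Lefschetz formula for $V$ to pin down the weights. The target is to show that the weight at $(\tfrac12,\tfrac12)$ is an odd power of $L_4$, hence $m$ odd and $x_4^k$-detectable. If the $C_4$ bookkeeping is inconclusive, I would try $C_3$ or $C_6$ acting on the hexagonal torus in the same way.

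This last step, showing that some fixed point carries a nontrivial character, is the main obstacle. For $-I$ and $V=L^2$ with $L$ symmetric, the characters at $2$-torsion points may all be trivial, which is why I do not use $C_2$. Everything else is formal given Theorem \ref{thm:MUCnFlat}.
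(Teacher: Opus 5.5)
Your reduction is sound, and it differs from the paper's route. You restrict along a fixed point $j_q\colon BC_n\to J_{2g}$, use Landweber's theorem that $\MU^*(BC_n)$ is torsion-free, and detect $[m](x_n)^k$ in $\H^*(BC_n;\bZ)$. This would replace the paper's study of the whole Borel construction $E=EC_6\times_{C_6}T^2$, where a stable splitting shows $\MU^*(E)$ is torsion-free and $\omega^d$ is then computed in $\H^*(E;\bZ)$. However, the step that carries all the content, namely exhibiting a fixed point with a usable weight, is not done, and the plan you sketch for it cannot work as stated.

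Over $C_4$ (or $C_3$) by itself, properties (i) and (ii) do not determine the equivariant class, because $\H^1(C_n;\H^1(T^2;\bZ))\neq 0$. Write $\nu_p$ for the equivariant Thom class of a fixed point $p$ and set $h=(\tfrac12,\tfrac12)$. Then both $2(\nu_0-\pi^*x)$ and $\nu_0+\nu_h-\pi^*x$ satisfy (i) and (ii), yet they restrict to $-2x$ and to $0$ at $h$. So localisation or Lefschetz bookkeeping that uses only (i), (ii) and $\deg V=2$ cannot decide the weight. You must use that the class is pulled back from a group where uniqueness holds, such as $\SL(2,\bZ)$ (where $\H^1(\SL(2,\bZ);\H^1(T^2;\bZ))=0$) or $C_6$. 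This is exactly what the paper supplies: the class is $2(\nu-\pi^*x)$, with $\nu$ the Thom class of the zero section. Since $\nu$ vanishes at every fixed point $q\neq 0$, one gets $\Omega\vert_q=-2x$. In other words, the weight at $q$ is $T_0^{\otimes -2}$, where $T_0$ is the tangent representation at the origin.

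With this input, your concrete expectations turn out to be wrong.
\begin{itemize}
\item For $C_4$, the weight at $h$ is $L_4^{\otimes 2}$, so $m$ is even. Its image $2^kx^k\in\H^{2k}(BC_4;\bZ)=\bZ/4$ vanishes for $k\geq 2$, so your detection method fails there. The classes $[2](x_4)^k$ are in fact nonzero, since $\K^0(BC_4)\to\bZ_2[i]$, $1-x_4\mapsto i$, sends them to $2^k$; but that is a different argument.
\item For $C_6$, the only fixed point is the origin, since $T-I$ is invertible for the generator $T$. There the weight is trivial by (i), so that fallback is useless.
\item What does work is the subgroup $C_3\leq C_6$. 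It has two fixed points $q\neq 0$, at which $\Omega\vert_q=-2x_3=x_3$ generates $\H^2(BC_3;\bZ)=\bZ/3$. So all powers are nonzero, and your argument closes.
\end{itemize}

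Completed this way, your route is a little more economical than the paper's: it needs neither the stable splitting of $E$ nor the ring structure of $\H^*(E;\bZ)$. It also shows that the non-vanishing in the paper's $\bZ/6$ formula for $\omega^d$ is a $3$-primary phenomenon. But as written, the proposal leaves the essential computation open and aims it in the wrong direction.

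Separately, the external tensor product claim over $\SL(2,\bZ)\times\Sp(2g-2,\bZ)$ is unnecessary. You can simply restrict to $T^2\times\{0\}$ over $\SL(2,\bZ)$, as the paper does.
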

\begin{proof}
There are bundle maps
\begin{equation*}
\begin{tikzcd}
E C_6 \times_{C_6} T^2 \rar \dar& E\SL(2,\bZ) \times_{\SL(2,\bZ)} T^2 \rar \dar& E\Sp(2g, \bZ) \times_{\Sp(2g, \bZ)} T^{2g} \dar\\
BC_6 \rar & B\SL(2, \bZ) \rar & B\Sp(2g, \bZ).
\end{tikzcd}
\end{equation*}
It is enough to show that $\Omega^{MU}$ pulled back to $E C_6 \times_{C_6} T^2$ has the property that no power is torsion. This pull back still satisfies (i) and (ii).

Let us write $\pi : E \to B$ for the bundle $E C_6 \times_{C_6} T^2 \to BC_6$, $s : B \to E$ for the canonical section, and $L \to B$ for the canonical complex line bundle (where $C_6$ acts as the sixth roots of unity). As we explained in the proof of Theorem \ref{thm:elliptic}, there is an isomorphism $s^*(T_\pi E) \cong L$, and so we may find an embedding $e : L \to E$ as a tubular neighbourhood of the zero section. There is then a corresponding collapse map
$$c : E \lra L^+$$
to the fibrewise 1-point compactification of $L$ over $B$. There is furthermore a cofibre sequence
\begin{equation}\label{eq:CofSeq}
B \overset{s_\infty}\lra L^+ \overset{q}\lra \mathrm{Th}(L \to B)
\end{equation}
where $s_\infty$ is the section at infinity. We define a class
$$\nu := c^*(q^*(u)) \in \H^2(E;\bZ)$$
where $u \in \H^2(\mathrm{Th}(L);\bZ)$ is the Thom class. Restricted to a fibre $T$ of $E$ the class $\nu$ is a generator (as $c$ had degree 1 on each fibre). 

Recall that $C_6 \subset \SL(2, \bZ)$ is generated by the element $T=\bigl(\begin{smallmatrix}
0&-1\\ 1&1
\end{smallmatrix} \bigr)$, and the matrix $T - I = \bigl(\begin{smallmatrix}
-1&-1\\ 1&0
\end{smallmatrix} \bigr)$ is invertible. It then follows from the standard periodic resolution of $\bZ$ over $\bZ[C_6]$ that
$$\H^*(C_6 ; \H^1(T;\bZ))=0,$$
and hence the Serre spectral sequence for $\pi : E \to B$ collapses. In addition we see that $c : E \to L^+$ induces an isomorphism on $E^2$-pages of the associated Serre spectral sequences, so $c$ induces an isomorphism on cohomology. Thus it is a stable homotopy equivalence, so also induces an isomorphism in $\MU$-theory. The cofibre sequence \eqref{eq:CofSeq} is split by the projection $L^+ \to B$, so we have stable homotopy equivalences
$$\Sigma^\infty_+ E \simeq \Sigma^\infty_+ L^+ \simeq \Sigma^\infty_+ B \vee \Sigma^\infty \mathrm{Th}(L)$$
and so $\MU^i(E) \cong \MU^i(BC_6) \oplus \MU^{i-2}(BC_6)$, which is torsion-free (Theorem \ref{thm:MUCnFlat}). Thus it will be enough to show that the pullback of $\Omega^{MU}$ to $\MU^*(E)$ is not nilpotent, and for this it is enough to show that the pullback of $\Omega$ to $\H^*(E;\bZ)$ is not nilpotent.

By the collapse of the Serre spectral sequence for $\pi$, we have that $\H^2(E;\bZ) = \bZ\{\nu\} \oplus \bZ/6\{\pi^*(x)\}$ where $x = c_1(L) \in \H^2(C_6;\bZ)$ and $\nu$ we constructed above. The identity $s^*(\nu) = x$ follows from the definition of $\nu$, and hence $\omega := 2(\nu - \pi^*(x))$ is the unique class satisfying
\begin{enumerate}[(i)]
\item $s^*(\omega)=0$,

\item $\omega \vert_{T^2}$ is twice a generator of $\H^2(T^2;\bZ)$,
\end{enumerate}
so is the pullback of $\Omega$ to $E$. The identity $\nu^2 = \pi^*(x) \cdot \nu$ holds (by definition of the Euler class in terms of the Thom class), and so
$$\omega^d = -(-2)^d\nu\cdot\pi^*(x)^{d-1} +(-2\pi^*(x))^{d} \in \H^{2d}(E;\bZ) = \bZ/6\{\pi^*(x)^d, \nu \cdot \pi^*(x)^{d-1}\}$$
which is not zero, so $\omega$ is not nilpotent.
\end{proof}

\bibliographystyle{plain}
\bibliography{MainBib}  

\end{document}